\theoremstyle{definition}
\newenvironment{theorem}
{\pushQED{\qed}\theoremx}
{\popQED\endtheoremx}
\newtheorem{definition}[equation]{Definition}
\newenvironment{lemma}
{\pushQED{\qed}\lemmax}
{\popQED\endlemmax}
\newenvironment{example}
{\pushQED{\qed}\examplex}
{\popQED\endexamplex}
\newenvironment{remark}
{\pushQED{\qed}\remarkx}
{\popQED\endremarkx}
\DeclareMathOperator{\sgn}{sgn}
\DeclareMathOperator{\SL}{SL}
\DeclareMathOperator{\PSL}{PSL}
\DeclareMathOperator{\AGL}{AGL}
\DeclareMathOperator{\Sym}{Sym}
\DeclareMathOperator{\Alt}{Alt}
\newcommand{\F}{\mathbb{F}}
\newcommand{\jbar}{{\bar\jmath}}
\title{Short presentations of finite simple groups}
\author{Peter Huxford}
\address{Department of Mathematics, University of Auckland,
Auckland, New Zealand}
\email{phux240@aucklanduni.ac.nz}
\begin{document}

\maketitle

\begin{abstract}
  In \cite{GKKL1} Guralnick, Kantor, Kassabov and Lubotzky give 3-generator
  7-relator presentations of $A_n$ and $S_n$ with bit-length $O(\log n)$ for
  $n\geq5$. This is the best possible bit-length, since $\Omega(\log n)$ bits
  are required to specify the integer $n$ in the input. However, the generators
  do not satisfy the relations.

  This paper considers the relevant arguments given in \cite{GKKL1}, identifies
where the errors occur, and shows how they can be fixed in order to recover this
result. The presentations are available in \textsc{Magma}.
\end{abstract}

\tableofcontents

\section{Introduction}

In \cite[p.\ 290]{Sims} Sims observed that ``There is no universal agreement as to when one presentation is simpler than another''. This paper is largely concerned with optimizing one particular measurement. Namely, the \emph{bit-length} of a finite presentation is the number of symbols required to write it down, where each generator is a single symbol, each relator is a string of symbols, and exponents are written in binary.

Observe that bit-length is bounded above by \emph{word length}, where the relators of a finite presentation on generators $X$ are expressed as strings in the alphabet $X\cup X^{-1}$.

The notion of bit-length was introduced by Babai and Szemerédi in \cite{BS}; there they conjectured that, for some constant $c$, every finite simple group $G$ has a presentation of bit-length $O((\log |G|)^c)$. The results of \cite{BGKLP,HS,Suz} show that $c=2$ suffices for all finite simple groups, with the possible exception of the Ree groups ${}^2G_2(q)$.

Presentations with short bit-length have applications in computational group theory. For example, a major project is to compute a composition series of a matrix group defined over a finite field. In \cite{LG} Leedham-Green proposed a randomized algorithm to solve this problem. Given such a group $G$, a binary tree rooted at $G$ is constructed as follows. If $G$ is not simple, then we construct an epimorphism $\theta\colon G\to I$ with nontrivial, proper kernel $K$. Trees rooted at $K$ and $I$ are then recursively constructed and combined to produce a tree rooted at $G$.

The leaves of the resulting tree should be the composition factors of $G$. However, due to the randomized nature of the algorithm, there is a nonzero probability that the proposed composition series may instead be that of a proper subgroup of $G$. To decide whether this is the case, a proposed presentation of $G$ is recursively constructed from presentations of $K$ and $I$ (cf. \cite[Section 10.2]{Joh}). If the relations are satisfied in $G$, then the constructed composition series is correct. To improve the efficiency of this verification step, it is desirable that the presentations of the composition factors, which are finite simple groups, have short bit-length. For more detail on the main algorithm see \cite{BHLO}, and for related surveys see \cite{OB1,OB2}.

Explicit presentations of the alternating and symmetric groups were already known in the 19th century. In \cite{Moo} Moore gave an $(n-1)$-generator $n(n-1)/2$-relator presentation of $S_n$ with generators $(i,i+1)$ for $1\leq i<n$. Namely:
\begin{align}
  \label{eq:moointro}
  \begin{split}
    S_n \cong \langle x_1,\ldots,x_{n-1} &\mid x_i^2=1 \text{ for } 1\leq i<n, \\
    &\phantom{\mid{}} (x_{i-1}x_i)^3 = 1 \text{ for } 1<i<n, \\
    &\phantom{\mid{}} (x_ix_j)^2 = 1 \text{ for } 1\leq i < j-1 < n-1 \rangle.
  \end{split}
\end{align}
This is an example of a \emph{Coxeter} presentation, and has bit-length $O(n^2)$. Moore used this to derive a 2-generator $(n+1)$-relator presentation of $S_n$, which has shorter bit-length $O(n\log n)$, based on a transposition and an $n$-cycle. Introducing auxiliary generators corresponding to powers of the $n$-cycle readily produces a presentation of $S_n$ with bit-length $O(n)$.

In \cite{Car1} (see also \cite[p.\ 169]{Car2}) Carmichael gave an $n$-generator $n(n+1)/2$-relator Coxeter presentation of the simple group $A_{n+2}$ for $n\geq3$ with generators $(i,n+1,n+2)$ for $1\leq i\leq n$. Namely:
\begin{align}
  \label{eq:carmichael}
  \begin{split}
    A_{n+2} \cong \langle x_1,\ldots,x_n &\mid x_i^3 = 1 \text{ for } 1\leq i\leq n, \\
    &\phantom{\mid{}} (x_ix_j)^2 = 1 \text{ for } 1\leq i<j\leq n \rangle.
  \end{split}
\end{align}
Observe this presentation has bit-length $O(n^2)$.

Remarkable progress has recently been made in reducing the asymptotic bit-length of presentations for the finite simple and other related groups. In \cite{BCLO} Bray, Conder, Leedham-Green and O'Brien give presentations of $A_n$ and $S_n$ on a uniformly bounded number of generators and relations with bit-length $O(\log n)$. This is the best possible bit-length, since $\Omega(\log n)$ bits are required to specify $n$ in the input. In \cite{LO} Leedham-Green and O'Brien give presentations of all classical groups of rank $n$ over a field of size $q$ with at most 8 generators, $O(n)$ relations, and bit-length $O(n+\log q)$.

Guralnick, Kantor, Kassabov and Lubotzky prove stronger results in their papers \cite{GKKL2,GKKL1} on presentations of finite simple groups. The goal of the former paper is to optimize asymptotic word length, whereas the latter focuses on bit-length. In \cite{GKKL2} Guralnick \emph{et al.} give presentations of $A_n$ and $S_n$ on a uniformly bounded number of generators and relations with word length $O(\log n)$. In \cite{GKKL1} they show that if $n\geq5$, then $A_n$ and $S_n$ have 3-generator 7-relator presentations with bit-length $O(\log n)$. The correctness of these presentations is crucial for the presentations given in \cite{GKKL1} for other finite simple groups. This includes their major result that all finite quasisimple groups of Lie type of rank $n$ over a field of size $q$, with the possible exception of the Ree groups ${}^2G_2(q)$, have presentations with at most 9 generators, 49 relations, and bit-length $O(\log n+\log q)$. Again, this is the best possible bit-length.

However, the generators of the explicit 3-generator 7-relator presentation of the symmetric group in \cite[Section 3.5]{GKKL1} do not satisfy the relations. The corresponding presentation for the alternating group is not explicitly described. In fact, due to several errors in \cite{GKKL1}, some alternating groups are not covered by the arguments.

In this paper we carefully examine the arguments of \cite{GKKL1} on alternating
and symmetric groups, and identify and correct the relevant errors. We
explicitly describe 3-generator 7-relator presentations of $A_n$ and $S_n$ with
bit-length $O(\log n)$ for all $n$ satisfying either $13\leq n\leq 20$, $25\leq
n\leq 44$, or $n\geq49$. (The remaining values of $n\geq5$ are handled in
\cite{GKKL1}.) \textsc{Magma} \cite{Magma} was used to provide additional
evidence in support of our results.

Much of the work reported here first appeared in \cite{dissertation}, supervised
by Eamonn O'Brien. This report will be updated to reflect other corrections, if any, to the
presentations given in \cite{GKKL1}.

\subsection{Notation}

Functions act on the left. We write $g^h=h^{-1}gh$ and $[g,h]=g^{-1}h^{-1}gh=g^{-1}g^h$. All group actions (including permutations) are right actions. For example $(1,2)(2,3)=(1,3,2)$, and if $(x_1,\ldots,x_k)\in S_n$ is a $k$-cycle, and $\sigma\in S_n$ is an arbitrary permutation, then
\[ (x_1,\ldots,x_k)^\sigma = \sigma^{-1}(x_1,\ldots,x_k)\sigma = (x_1^\sigma,\ldots,x_k^\sigma). \]
If $G$ has a (right) group action on $\Omega$ (that is, $\Omega$ is a $G$-set) and $a\in\Omega$, then the image of $a$ under $g$ is written as $a^g$. This notation agrees with that of \cite{GKKL1}, but permutations are multiplied in the \emph{opposite order} (for example, see \cite[Remark 3.11]{GKKL1}).

\begin{definition}
  Let $G=\langle X \rangle$ be a group and $g\in G$. The \emph{bit-length} of $g$ in $X$ is the number of symbols required to express $g$ in terms of elements of $X$, where each element of $X$ is a single symbol, and exponents are written in binary.
\end{definition}

Thus the bit-length of a finite presentation on generators $X$ is $|X|$ plus the sum of the bit-lengths of the relators in $X$.

\subsection{Preliminaries}

This following is proved in \cite[Lemma 2.3]{GKKL2}.

\begin{lemma}
  \label{lem:2.1}
  Let $\pi\colon F_{X\cup Y}\twoheadrightarrow J=\langle X,Y \mid R,S \rangle$ be the natural surjection, and suppose $H=\langle X\mid R \rangle$ is finite. If $\alpha\colon J\to J_0$ is a homomorphism such that $\alpha\langle \pi(X) \rangle\cong H$, then $\langle \pi(X) \rangle\cong H$.
\end{lemma}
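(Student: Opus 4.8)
The plan is to exhibit a canonical surjection from $H$ onto the subgroup $\langle \pi(X) \rangle$ of $J$, and then use the hypothesis on $\alpha$, together with the finiteness of $H$, to force that surjection to be an isomorphism by a counting argument.

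First I would construct a homomorphism $\phi\colon H \twoheadrightarrow \langle \pi(X) \rangle$. Since the relators $R$ are words in the generators $X$ alone and occur among the defining relators of $J=\langle X,Y\mid R,S\rangle$, the composite $F_X \hookrightarrow F_{X\cup Y} \xrightarrow{\pi} J$ kills the normal closure of $R$ inside $F_X$; hence it factors through $H=\langle X\mid R\rangle=F_X/\langle\langle R\rangle\rangle$, giving a homomorphism $\phi\colon H \to J$ with image exactly $\langle \pi(X) \rangle$. In particular $\phi$ is a surjection onto $\langle \pi(X) \rangle$, so $\lvert \langle \pi(X) \rangle \rvert \le \lvert H \rvert$, which is finite.

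Next I would bring in $\alpha$. Restricting $\alpha$ to $\langle \pi(X) \rangle$ yields a homomorphism onto $\alpha\langle \pi(X) \rangle$, which by hypothesis is isomorphic to $H$; hence $\lvert H \rvert = \lvert \alpha\langle \pi(X) \rangle \rvert \le \lvert \langle \pi(X) \rangle \rvert$. Combining the two inequalities gives $\lvert \langle \pi(X) \rangle \rvert = \lvert H \rvert < \infty$, so the surjection $\phi$ is a map between finite groups of equal order, hence a bijection, hence an isomorphism $H \cong \langle \pi(X) \rangle$. (Equivalently, the composite $H \xrightarrow{\phi} \langle \pi(X) \rangle \xrightarrow{\alpha} \alpha\langle \pi(X) \rangle \cong H$ is a surjective endomorphism of a finite group, hence an automorphism, which again forces $\phi$ to be injective.)

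There is no serious obstacle; the only points needing care are the well-definedness of $\phi$ — which rests on $R$ consisting of relators purely in $X$ that survive into $J$ — and the observation that the isomorphism in the hypothesis is used only to read off the cardinality $\lvert \alpha\langle \pi(X) \rangle \rvert = \lvert H \rvert$, not any structural compatibility. The finiteness of $H$ is essential: it is precisely what converts the two-sided chain of surjections into an isomorphism.
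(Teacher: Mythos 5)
Your proof is correct and follows essentially the same route as the paper: construct the canonical surjection $H\twoheadrightarrow\langle\pi(X)\rangle$ from the fact that $R$ is killed in $J$, observe the reverse surjection $\langle\pi(X)\rangle\twoheadrightarrow\alpha\langle\pi(X)\rangle\cong H$, and conclude by finiteness. The paper's proof is just a terser version of your argument, leaving the final counting step implicit.
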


\begin{proof}
  Let $\lambda\colon F_X\twoheadrightarrow H$ be the natural surjection. If $r\in R$, then $r(\pi(X))=1$, so $\lambda$ induces a surjection $H\twoheadrightarrow \langle \pi(X) \rangle$. There is also a surjection $\langle \pi(X) \rangle\twoheadrightarrow\alpha\langle \pi(X) \rangle \cong H$.
\end{proof}

\begin{definition}
  \label{def:2-homogeneous}
  Let a group $G$ act on a set $\Omega$. The action is \emph{2-homogeneous} if the induced action of $G$ on the 2-subsets of $\Omega$ is transitive.
\end{definition}

\begin{lemma}
  \label{lem:2-homogeneous-implies-transitive}
  If $|\Omega|\geq3$, then a 2-homogeneous action on $\Omega$ is transitive.
\end{lemma}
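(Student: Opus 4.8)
The plan is to show directly that any two points of $\Omega$ lie in a common $G$-orbit; since ``lying in the same orbit'' is an equivalence relation on $\Omega$, transitivity will follow at once.

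So I would fix two distinct points $a,b\in\Omega$ (if $a=b$ there is nothing to prove). Using the hypothesis $|\Omega|\geq3$, I choose a third point $c\in\Omega\setminus\{a,b\}$, so that $\{a,c\}$ and $\{b,c\}$ are genuine $2$-subsets of $\Omega$. Applying $2$-homogeneity to this pair of $2$-subsets yields $g\in G$ with $\{a,c\}^g=\{b,c\}$, that is, $\{a^g,c^g\}=\{b,c\}$. Then I split into two cases. Either $a^g=b$, in which case $a$ and $b$ lie in the same $G$-orbit; or $a^g=c$, and then, since $g$ permutes $\Omega$ and $a\neq c$ forces $a^g\neq c^g$, we must have $c^g=b$, so $a$ lies in the orbit of $c$ and $c$ lies in the orbit of $b$, whence $a$ and $b$ again share an orbit. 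As $a$ and $b$ were arbitrary, all of $\Omega$ is a single $G$-orbit, i.e.\ the action is transitive.

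I do not expect any real obstacle here: the argument is a two-line case analysis. The one point worth flagging is exactly where the hypothesis enters — $|\Omega|\geq3$ is precisely what allows the choice of an auxiliary point $c$ distinct from both $a$ and $b$, so that $\{a,c\}$ and $\{b,c\}$ are honest $2$-subsets on which $2$-homogeneity may be invoked. Without it the statement fails (the trivial group acting on a $2$-element set is vacuously $2$-homogeneous but not transitive).
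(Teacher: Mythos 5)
Your proof is correct and follows essentially the same approach as the paper: fix distinct $a,b$, choose an auxiliary third point $c$, and use $2$-homogeneity on a $2$-subset containing $c$ to conclude $a$ and $b$ lie in the same orbit. The only (minor) difference is that you need just one application of $2$-homogeneity (taking $\{a,c\}^g=\{b,c\}$ and noting $a^g\in\{b,c\}$ in either case puts $a$ and $b$ in the same orbit), whereas the paper's proof invokes it twice and composes the two resulting elements.
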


\begin{proof}
  Let $G$ act 2-homogeneously on $\Omega$. Let $a,b,c$ be distinct elements of $\Omega$, and let $g,h\in G$ satisfy  $\{a,c\}^g=\{b,c\}$ and $\{a,c\}^h=\{a,b\}$. If $a^g,a^h\neq b$, then $a^g=c$ and $c^h=b$, hence $a^{gh}=b$.
\end{proof}

\begin{lemma}
  \label{lem:2-homogeneous-alternating}
  Let $G$ act 2-homogeneously on $\Omega$, and let $\phi\colon G\to\Sym(\Omega)$ be the corresponding homomorphism. Suppose $|\Omega|\geq3$.
  \begin{enumerate}[label=(\roman*)]
  \item If the action is not 2-transitive, then $\phi(G)\leq\Alt(\Omega)$.
  \item The action of $\phi(G)\cap\Alt(\Omega)$ on $\Omega$ is transitive.\qedhere
  \end{enumerate}
\end{lemma}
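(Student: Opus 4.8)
The plan is to pass at once to the transitive permutation group $\bar G:=\phi(G)\le\Sym(\Omega)$. Since the $G$-action on $\Omega$, and hence on the $2$-subsets of $\Omega$, factors through $\phi$, the group $\bar G$ has the same orbits as $G$; so $\bar G$ is again $2$-homogeneous, it is $2$-transitive exactly when $G$ is, and by Lemma~\ref{lem:2-homogeneous-implies-transitive} it is transitive on $\Omega$. The engine of part~(i) is the observation that \emph{a $2$-homogeneous but not $2$-transitive action has no group element interchanging two points}: if $a^g=b$ and $b^g=a$ for distinct $a,b$ and some $g$, then conjugating $g$ by elements carrying $\{a,b\}$ onto an arbitrary $2$-subset (available from Definition~\ref{def:2-homogeneous}) shows every $2$-subset has its two points interchanged by some group element, and this in turn upgrades transitivity on $2$-subsets to $2$-transitivity (given any two ordered pairs of distinct points, move the underlying $2$-subsets onto one another and, if necessary, post-compose with an element swapping the two target points) — contrary to hypothesis.

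For part~(i), assume the action is not $2$-transitive. By the engine, $\bar G$ contains no involution, since an element of order $2$ has cycle type consisting of fixed points and at least one transposition, and therefore interchanges a pair of points. A finite permutation group with no element of order $2$ has odd order by Cauchy's theorem, and a group of odd order lies in the kernel of the sign homomorphism $\Sym(\Omega)\to\{\pm1\}$; hence $\bar G\le\Alt(\Omega)$. For part~(ii), set $N:=\bar G\cap\Alt(\Omega)\trianglelefteq\bar G$, which has index at most $2$ as $[\Sym(\Omega):\Alt(\Omega)]=2$. The orbits of the normal subgroup $N$ form a block system for the transitive group $\bar G$, permuted transitively by $\bar G$ with setwise stabilizers containing $N$; hence there are at most $[\bar G:N]\le2$ of them, all of the same size. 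If $N$ were intransitive there would be exactly two $N$-orbits $\Omega_1,\Omega_2$, each of size $|\Omega|/2\ge2$ (using $|\Omega|\ge3$ and evenness), and $\bar G$ permutes $\{\Omega_1,\Omega_2\}$. Choosing distinct $a,b\in\Omega_1$ together with $c\in\Omega_1$ and $d\in\Omega_2$, $2$-homogeneity yields $g\in\bar G$ with $\{a,b\}^g=\{c,d\}$; but $g$ maps $\Omega_1$ onto $\Omega_1$ or onto $\Omega_2$, so $\{a,b\}^g$ lies within a single $N$-orbit, whereas $\{c,d\}$ meets both — a contradiction. Therefore $N$ is transitive.

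The step I expect to demand the most care is the engine of part~(i): one must handle the right-action bookkeeping carefully to verify both that ``some pair is interchanged'' propagates, via $2$-homogeneity and conjugation, to ``every pair is interchanged'', and that the latter together with transitivity on $2$-subsets genuinely forces full $2$-transitivity. The remaining ingredients — Cauchy's theorem, triviality of the sign map on odd-order groups, and orbits of a normal subgroup forming a block system — are standard, as is the tacit hypothesis that $\Omega$ is finite, which holds in every intended application and is what makes the order and parity arguments meaningful.
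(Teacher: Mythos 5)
Your proof is correct. For part~(i) your reasoning is essentially the paper's: both arguments hinge on the observation that a $2$-homogeneous action with an element transposing some pair of points is automatically $2$-transitive, hence the absence of $2$-transitivity forces the image into $\Alt(\Omega)$; the paper phrases the last step element-wise (no even-order elements, so every permutation in the image is even) while you phrase it via Cauchy and the order of the group, but the content is the same.

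For part~(ii) your route is genuinely different. The paper splits into two cases according to whether $\phi(G)\le\Alt(\Omega)$: in the first case it invokes Lemma~\ref{lem:2-homogeneous-implies-transitive} directly, and in the second case it uses part~(i) to conclude the action is $2$-transitive, then conjugates a nontrivial even element by point-stabilizing elements to drag a given point everywhere. Your argument, by contrast, is uniform: $N:=\phi(G)\cap\Alt(\Omega)$ is normal of index at most~$2$, so its orbits form a block system with at most two blocks, and $2$-homogeneity rules out a genuine two-block partition because a $2$-subset straddling the two blocks cannot be the image of a $2$-subset inside one block. This avoids any appeal to part~(i) or to $2$-transitivity and is a clean, self-contained replacement; the paper's approach has the mild advantage of reusing machinery already set up and making the $2$-transitive subcase very concrete. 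Both you and the paper tacitly use finiteness of $\Omega$ (for the sign map and for cardinality counts), which is harmless since the lemma is only applied to finite $\Omega$.
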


\begin{proof}
  (i) Note that $G$ contains no involutions, otherwise some ordered pair of distinct elements of $\Omega$ could be sent to every other such pair. Hence $G$ has no elements of even order, thus $\phi(G)\leq\Alt(\Omega)$.

  (ii) If $\phi(G)\leq\Alt(\Omega)$, then this follows from Lemma~\ref{lem:2-homogeneous-implies-transitive}. Otherwise, by (i) the action is 2-transitive. Note $[\phi(G):\phi(G)\cap\Alt(\Omega)]\leq[\Sym(\Omega):\Alt(\Omega)]=2$, and $|\phi(G)|\geq|\Omega|$ since $G$ acts transitively on $\Omega$ by Lemma~\ref{lem:2-homogeneous-implies-transitive}. Therefore $|\phi(G)\cap\Alt(\Omega)|\geq\frac{1}{2}|\phi(G)|\geq\frac{1}{2}|\Omega|>1$.

  Now let $h\in G$ with $\phi(h)\in\Alt(\Omega)\setminus\{1\}$. Let $a,c\in\Omega$ be such that $a^h=c\neq a$. For each $b\in\Omega\setminus\{a\}$, by 2-transitivity there is a $g\in G$ such that $a^g=a$ and $c^g=b$. Then $a^{g^{-1}hg}=b$, and $\phi(g^{-1}hg)\in\Alt(\Omega)$. Hence $\phi(G)\cap\Alt(\Omega)$ is transitive.
\end{proof}

\begin{definition}
  An \emph{isomorphism} $f\colon\Omega\to\Gamma$ of $G$-sets is a bijection such that $f(a^g)=f(a)^g$ for all $a\in\Omega$ and $g\in G$. Note that $f^{-1}$ is also an isomorphism of $G$-sets. If such an isomorphism exists, then $G$ \emph{acts on $\Gamma$ as it does on $\Omega$}.
\end{definition}

\begin{lemma}
  \label{lem:orbit-stabilizer}
  If $G$ acts transitively on $\Omega$ and $a\in\Omega$, then $G$ acts on $\Omega$ as it does on right cosets $G/G_a$, where $G_a$ is the stabilizer of $a$. In particular there is an isomorphism of $G$-sets $f\colon\Omega\to G/G_a$ given by $f(a^g)\coloneqq G_ag$ for $g\in G$.
\end{lemma}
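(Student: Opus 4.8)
The plan is to verify directly that the proposed map $f$ is a well-defined bijection that intertwines the two actions. First I would fix notation for the coset action: $G$ acts (on the right) on the set $G/G_a$ of right cosets of the stabilizer by $(G_ag)^h\coloneqq G_a(gh)$, and note in one line that this really is an action, since $((G_ag)^{h_1})^{h_2}=G_a(gh_1h_2)=(G_ag)^{h_1h_2}$ and $(G_ag)^1=G_ag$.

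The crux is that $f$ is well-defined and injective, and both follow from a single computation. By transitivity, every element of $\Omega$ has the form $a^g$ for some $g\in G$, so the recipe $f(a^g)\coloneqq G_ag$ does assign a value to every point of $\Omega$; what must be checked is that this value is independent of the choice of $g$. For $g,g'\in G$ one has the chain of equivalences
\[ a^g=a^{g'}\iff a^{g(g')^{-1}}=a\iff g(g')^{-1}\in G_a\iff G_ag=G_ag', \]
where the first step applies the bijection $x\mapsto x^{(g')^{-1}}$ of $\Omega$ to both sides (and uses $(x^{g_1})^{g_2}=x^{g_1g_2}$ and $a^1=a$), the second is the definition of the stabilizer, and the third is the standard criterion for equality of right cosets, using that $G_a$ is a subgroup. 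Reading this chain from left to right shows $f$ is well-defined; reading it from right to left shows $f$ is injective. Surjectivity is immediate, since the coset $G_ag$ is $f(a^g)$.

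Finally I would check that $f$ respects the actions: for all $g,h\in G$,
\[ f\bigl((a^g)^h\bigr)=f(a^{gh})=G_a(gh)=(G_ag)^h=f(a^g)^h, \]
and by transitivity this covers every point of $\Omega$. Hence $f$ is an isomorphism of $G$-sets, which by definition is exactly the statement that $G$ acts on $\Omega$ as it does on $G/G_a$. I do not expect any genuine obstacle; the only thing requiring a little care is the bookkeeping imposed by the right-action convention — in particular writing the coset action as $(G_ag)^h=G_a(gh)$ rather than in the more familiar left-multiplication form — and making sure the well-definedness argument visibly uses transitivity of the action together with the subgroup property of $G_a$.
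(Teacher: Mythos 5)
Your proof is correct and follows essentially the same route as the paper's: the same chain of equivalences $a^g=a^{g'}\iff g(g')^{-1}\in G_a\iff G_ag=G_ag'$ establishes that $f$ is a well-defined bijection, and the same one-line computation $f(a^{gh})=G_a(gh)=f(a^g)^h$ establishes equivariance. You merely spell out a few more of the routine details (the coset action axioms, surjectivity) than the paper does.
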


\begin{proof}
  If $g,h\in G$, then $a^g=a^h$ is equivalent to $gh^{-1}\in G_a$, and hence also $G_ag=G_ah$. Thus $f$ is a well-defined bijection. Moreover, $f(a^{gh})=G_agh=f(a^g)h$ for all $g,h\in G$. Therefore $f$ is an isomorphism of $G$-sets, so $G$ acts on $\Omega$ as it does on $G/G_a$.
\end{proof}

\begin{lemma}
  \label{lem:same-action}
  Suppose $G$ acts transitively on finite sets $\Omega$, $\Gamma$ with $|\Omega|=|\Gamma|$, and $G_a\subseteq G_b$ for some $a\in\Omega$, $b\in\Gamma$. Then $G$ acts on $\Gamma$ as it does on $\Omega$. More precisely, there is exactly one isomorphism $f\colon\Omega\to\Gamma$ of $G$-sets which satisfies $f(a)=b$.
\end{lemma}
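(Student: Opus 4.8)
The plan is to write down the isomorphism explicitly: define $f\colon\Omega\to\Gamma$ by $f(a^g)\coloneqq b^g$ for $g\in G$, and then verify the four things in turn — well-definedness, bijectivity, equivariance, and $f(a)=b$ — before dispatching uniqueness. (Equivalently, one can compose the $G$-set isomorphisms $\Omega\xrightarrow{\sim}G/G_a$ and $\Gamma\xrightarrow{\sim}G/G_b$ of Lemma~\ref{lem:orbit-stabilizer} with the natural $G$-map $G/G_a\to G/G_b$, $G_ag\mapsto G_bg$; the argument is the same either way.)

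First I would check well-definedness. Since $G$ acts transitively on $\Omega$, every element of $\Omega$ has the form $a^g$, so it suffices to see that $a^g=a^h$ forces $b^g=b^h$. But $a^g=a^h$ is equivalent to $gh^{-1}\in G_a$ (as in the proof of Lemma~\ref{lem:orbit-stabilizer}), and $G_a\subseteq G_b$ by hypothesis, so $gh^{-1}\in G_b$, i.e. $b^g=b^h$. Next, surjectivity is immediate from transitivity of $G$ on $\Gamma$: every element of $\Gamma$ is $b^g=f(a^g)$ for some $g$. This is the step where the stabilizer containment alone is not enough to conclude injectivity — for that one would want the reverse inclusion $G_b\subseteq G_a$, which is not assumed. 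Instead I invoke the remaining hypothesis that $\Omega$ and $\Gamma$ are finite with $|\Omega|=|\Gamma|$: a surjection between finite sets of equal cardinality is a bijection, so $f$ is a bijection. Equivariance is then a routine check: for $g,h\in G$ we have $f\bigl((a^g)^h\bigr)=f(a^{gh})=b^{gh}=(b^g)^h=f(a^g)^h$, and since every point of $\Omega$ is of the form $a^g$ this gives $f(x^h)=f(x)^h$ for all $x\in\Omega$, $h\in G$. Taking $g=1$ yields $f(a)=b$, establishing existence.

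For uniqueness, suppose $f'\colon\Omega\to\Gamma$ is any isomorphism of $G$-sets with $f'(a)=b$. Given $x\in\Omega$, write $x=a^g$ using transitivity; then $f'(x)=f'(a^g)=f'(a)^g=b^g=f(x)$, so $f'=f$. (Note uniqueness uses only transitivity on $\Omega$, not the cardinality condition or the stabilizer inclusion.)

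The only genuinely subtle point — and the place where all three hypotheses are actually needed simultaneously — is upgrading the surjection $f$ to a bijection: this rests on finiteness together with $|\Omega|=|\Gamma|$, since the inclusion $G_a\subseteq G_b$ by itself only delivers a well-defined $G$-map onto $\Gamma$, not an injection. Everything else is a formal verification once the formula for $f$ is in place.
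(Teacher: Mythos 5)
Your proof is correct and follows essentially the same route as the paper: both rest on orbit-stabilizer counting combined with $|\Omega|=|\Gamma|$ to upgrade the containment $G_a\subseteq G_b$ into what is effectively an equality. The only cosmetic difference is that the paper draws the conclusion $G_a=G_b$ explicitly (from $[G:G_a]=[G:G_b]$) and then invokes Lemma~\ref{lem:orbit-stabilizer}, whereas you leave the stabilizers alone and instead upgrade the visibly well-defined surjection $a^g\mapsto b^g$ to a bijection by the pigeonhole argument on finite sets of equal size; these are equivalent, and your version has the small virtue of making the explicit formula for $f$ and the role of each hypothesis more visible.
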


\begin{proof}
  The transitivity of the $G$-set $\Omega$ uniquely determines such an isomorphism. Lemma~\ref{lem:orbit-stabilizer} implies that $\Omega\cong G/G_a$ and $\Gamma\cong G/G_b$ as $G$-sets. Thus $[G:G_a]=[G:G_b]$, so $G_a=G_b$, and hence $\Omega\cong\Gamma$ as $G$-sets. The isomorphism obtained clearly maps $a$ to $b$.
\end{proof}

\section{Alternating and symmetric groups}

We follow the strategy employed by Guralnick \emph{et al.} \cite{GKKL1} to construct presentations of alternating and symmetric groups for sufficiently large degrees.

For appropriately chosen 2-homogeneous groups $T$ and certain primes $p$, we construct 3-generator 5-relator presentations of $A_{p+2}\times T$ and $S_{p+2}\times T$. Introducing an additional relator gives 3-generator 6-relator presentations of $A_{p+2}$ and $S_{p+2}$. If $p\equiv11\bmod{12}$, then we reduce the number of generators by one and omit two relations, thereby producing 2-generator 4-relator presentations of $A_{p+2}$ and $S_{p+2}$.

We `glue' these presentations to obtain 3-generator 8-relator presentations of other alternating and symmetric groups; a strengthening of Bertrand's postulate shows that this handles all degrees at least 51. If care is taken when `gluing', then we can combine two of the relations, which yields 3-generator 7-relator presentations of these alternating and symmetric groups.

\subsection{Using 2-homogeneous groups for special degrees}

The following is \cite[Lemma 3.2]{GKKL1}.

\begin{lemma}
  \label{2-homogeneous}
  Let $n\geq3$ be an integer, let $T$ be a group with a \emph{2-homogeneous} action on $\{1,\ldots,n\}$, and let $\bar{\phantom{T}}\colon T\twoheadrightarrow\bar{T}\leq S_n$ be the induced surjection. Suppose $T=\langle X \mid R \rangle$, and $T_1=\langle X_1 \rangle$ is the stabilizer of 1, viewing the elements of $X_1$ as words in $X$. Let $w$ be a word in $X$ that, viewed as an element of $T$, moves 1 and satisfies $\bar{w}\in A_n$. View $S_n$ as a subgroup of $S_{n+2}$ fixing the points $n+1$ and $n+2$. For every $\sigma\in S_n$ write $\sgn(\sigma)=(-1)^{\epsilon(\sigma)}$, where $\epsilon(\sigma)\in\{0,1\}$. Then
  \[ J \coloneqq \langle X, z \mid R,\ z^3 = (zz^w)^2 = 1,\ z^t=z^{\sgn(\bar{t})} \text{ for } t\in X_1 \rangle \cong A_{n+2} \times T, \]
  via the mapping $\varphi\colon X\cup\{z\}\to A_{n+2}\times T$ defined by
  \begin{align*}
      \varphi(x) &\coloneqq (\bar{x}\cdot(n+1,n+2)^{\epsilon(\bar{x})}, x) \quad \text{for } x\in X, \\
      \varphi(z) &\coloneqq ((1,n+1,n+2), 1).\qedhere
  \end{align*}
\end{lemma}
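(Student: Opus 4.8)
The plan is to verify that $\varphi$ extends to a homomorphism $J\to A_{n+2}\times T$, to show that this homomorphism is surjective, and to prove $|J|\le|A_{n+2}\times T|$; since $T$ is finite, these three facts together force $\varphi$ to be an isomorphism. For well-definedness the key point is that the map $\iota\colon S_n\to A_{n+2}$ given by $\sigma\mapsto\sigma\cdot(n+1,n+2)^{\epsilon(\sigma)}$ is a homomorphism, because $\sgn$ is multiplicative and $(n+1,n+2)$ centralizes $S_n$. On $X$ the first coordinate of $\varphi$ is $x\mapsto\iota(\bar x)$ and the second is $x\mapsto x$, so $\varphi$ respects $R$; the relations involving $z$ are checked by direct computation in $A_{n+2}$ using the cycle-conjugation rule. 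Indeed $\varphi(z)^3=1$; since $\bar w\in A_n$ and $k:=1^{\bar w}\ne1$, one computes $\varphi(z)\,\varphi(z)^{\varphi(w)}=\bigl((1,n+2)(k,n+1),1\bigr)$, which has order $2$; and for $t\in X_1$ the permutation $\bar t$ fixes $1$, $n+1$ and $n+2$, so $\varphi(z)^{\varphi(t)}=\bigl((1,n+1,n+2)^{\sgn(\bar t)},1\bigr)=\varphi(z)^{\sgn(\bar t)}$.

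For surjectivity, $\bar T$ acts transitively on $\{1,\dots,n\}$ by Lemma~\ref{lem:2-homogeneous-implies-transitive}, so for each $\ell\in\{1,\dots,n\}$ there is $g\in T$ with $1^{\bar g}=\ell$; conjugating $\varphi(z)$ by $\varphi(g)=(\iota(\bar g),g)$ then yields $\bigl((\ell,n+1,n+2)^{\pm1},1\bigr)\in\varphi(J)$. By Carmichael's presentation~\eqref{eq:carmichael} these $3$-cycles generate $A_{n+2}$, so $A_{n+2}\times1\subseteq\varphi(J)$; and since $(\iota(\bar g),g)\in\varphi(J)$ for every $g\in T$, we get $\varphi(J)=A_{n+2}\times T$.

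For the upper bound on $|J|$, let $N\trianglelefteq J$ be the normal closure of $z$. Imposing $z=1$ trivialises every defining relation of $J$ other than those in $R$, so $J/N\cong\langle X\mid R\rangle=T$, and it suffices to show $|N|\le|A_{n+2}|$. By Lemma~\ref{lem:2.1} applied with $\alpha=\varphi$ (and $\pi\colon F_{X\cup\{z\}}\twoheadrightarrow J$ the natural surjection), using that $\varphi\langle\pi(X)\rangle=\{(\iota(\bar g),g):g\in T\}\cong T$, the subgroup $\tilde T:=\langle\pi(X)\rangle$ of $J$ is isomorphic to $T$, and $\tilde T_1:=\langle\pi(X_1)\rangle$ has index $n$ in it. The relation $z^t=z^{\sgn(\bar t)}$ then holds for all $t\in\tilde T_1$ since $t\mapsto\sgn(\bar t)$ is a homomorphism on $\tilde T_1$, and by Lemma~\ref{lem:2-homogeneous-alternating}(ii) we may choose a transversal $g_1=1,g_2,\dots,g_n$ of $\tilde T_1$ in $\tilde T$ with every $\bar g_i$ even. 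Writing $\zeta_i:=z^{g_i}$, these relations give both $N=\langle\zeta_1,\dots,\zeta_n\rangle$ and the rule $\zeta_i^{\,h}=\zeta_{i^{\bar h}}^{\sgn(\bar h)}$ for $h\in\tilde T$. Now $\zeta_i^3=1$, and the relation $(zz^w)^2=1$ is precisely $(\zeta_1\zeta_k)^2=1$; conjugating it by $\tilde T$ and using that $\bar T$ is $2$-homogeneous produces, for every $2$-subset $\{i,j\}$ of $\{1,\dots,n\}$, one of $(\zeta_i\zeta_j)^2=1$ or $(\zeta_j\zeta_i)^2=1$, and these are equivalent because $(ab)^2$ and $(ba)^2$ are conjugate. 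Hence $\zeta_1,\dots,\zeta_n$ satisfy the relations of~\eqref{eq:carmichael}, so $N$ is a quotient of $A_{n+2}$ and $|N|\le|A_{n+2}|$.

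Putting these together, $\varphi\colon J\to A_{n+2}\times T$ is a surjective homomorphism of finite groups with $|J|=|N|\,|T|\le|A_{n+2}\times T|$, hence an isomorphism, and by construction it restricts to the stated map on $X\cup\{z\}$. I expect the upper bound on $|J|$ to be the main obstacle, and to be the place where the errors of \cite{GKKL1} must be corrected: obtaining $(\zeta_i\zeta_j)^2=1$ for \emph{every} pair requires the even transversal, so that conjugation by any $h\in\tilde T$ rescales \emph{all} of the $\zeta_i$ by the \emph{same} sign $\sgn(\bar h)$ — this is exactly where Lemma~\ref{lem:2-homogeneous-alternating}(ii) is needed — together with the observation that $2$-homogeneity of $\bar T$ (rather than $2$-transitivity) suffices, the shortfall being absorbed by the conjugacy of $(ab)^2$ and $(ba)^2$. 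Checking that $t\mapsto\sgn(\bar t)$ is genuinely well defined on $\tilde T_1$, and tracking the signs in the conjugation rule, are the accompanying technical points to handle carefully.
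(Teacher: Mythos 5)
Your overall strategy coincides with the paper's: verify that $\varphi$ extends to a homomorphism, show it is surjective, and then bound $|J|$ from above by $|A_{n+2}\times T|$ by showing that the normal closure $N$ of $z$ satisfies Carmichael's relations \eqref{eq:carmichael}. The place where you diverge is the verification of Carmichael's relations. The paper studies the orbit $z^{\bar T\cap A_n}$ inside $z^T$ and splits into two cases according to whether $\bar T\cap A_n$ acts $2$-homogeneously on it; the non-$2$-homogeneous case is resolved separately via $2$-transitivity of $\bar T$ and an explicit odd element $g\in\bar T\setminus A_n$. You instead fix an \emph{even} transversal $g_1,\dots,g_n$ of $\tilde T_1$ (justified by Lemma~\ref{lem:2-homogeneous-alternating}(ii)), from which the uniform conjugation rule $\zeta_i^{\,h}=\zeta_{i^{\bar h}}^{\sgn(\bar h)}$ follows; then conjugating $(\zeta_1\zeta_k)^2=1$ by all of $\tilde T$ and using $2$-homogeneity of $\bar T$ together with the elementary equivalence of $(ab)^2=1$ and $(ba)^2=1$ sweeps up all $2$-subsets at once, with the sign $\sgn(\bar h)$ absorbing what the paper handles as a separate case. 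The two arguments are using the same underlying facts, but your packaging unifies the cases and is, in my view, tidier; the paper's version gives slightly more explicit information about the orbit structure of $\bar T\cap A_n$ on $z^T$.

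One small imprecision worth noting: both your argument and the paper's implicitly assume $T$ is finite (Lemma~\ref{lem:2.1} requires $H$ finite, and the final ``$|J|=|N|\,|T|\le|A_{n+2}\times T|$, hence an isomorphism'' step also uses finiteness); this is true in every application but is not stated in the lemma. Also, the step ``these relations give $N=\langle\zeta_1,\dots,\zeta_n\rangle$'' deserves a sentence: the conjugation rule shows $\langle\zeta_1,\dots,\zeta_n\rangle$ is stable under conjugation by $\tilde T$ and by $z=\zeta_1$, hence normal in $J=\langle\tilde T,z\rangle$ and therefore equal to the normal closure of $z$. Neither of these affects the correctness of the proof.
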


\begin{proof}
  The image of $\varphi$ satisfies the defining relations of $J$, hence $\varphi$ extends to a homomorphism $\varphi\colon J\to A_{n+2}\times T$.

  We first prove that $\varphi$ is a surjection. Projecting onto the second coordinate induces an isomorphism $\langle \varphi(X) \rangle\cong T$. Moreover, $\langle \varphi(X) \rangle$ acts 2-homogeneously as $\bar{T}$ on $\{1,\ldots,n\}$ when restricted to the first component. By Lemma~\ref{lem:2-homogeneous-implies-transitive}, for each $1\leq i\leq n$ there exist $(\sigma,t)\in\langle \varphi(X) \rangle$ with $1^{\sigma}=i$, so $\varphi(z)^{(\sigma,t)}=((i,n+1,n+2),1)$. Hence $\langle \varphi(z)^{\langle \varphi(X) \rangle} \rangle=A_{n+2}\times1$. Thus $\varphi(J)$ contains $(1,x)$ for each $x\in X$, since it contains $(\bar{x}(n+1,n+2)^{\epsilon(\bar{x})},1)$ for each $x\in X$. Therefore $\varphi$ is surjective. This also defines a surjection $\pi\colon J\to A_{n+2}$, by projecting onto the first coordinate.

  Since $\langle \varphi(X) \rangle\cong T$, by Lemma~\ref{lem:2.1} we may identify the subgroup $\langle X \rangle$ of $J$ with $T$. We seek to prove that $N\coloneqq\langle z^T \rangle\cong A_{n+2}$ by verifying the relations in \eqref{eq:carmichael}, using the defining relations $z^3=(zz^w)^2=1$ of $J$. Note that both $T$ and $\bar{T}\cap A_n$ act transitively on $\{1,\ldots,n\}$. (This follows from Lemmas~\ref{lem:2-homogeneous-implies-transitive} and \ref{lem:2-homogeneous-alternating}(ii).)

  The relations $z^t=z^{\sgn(\bar{t})}$ for $t\in X_1$ also hold for $t\in T_1$, hence $\langle z \rangle^{T_1}=\langle z \rangle$. By the orbit-stabilizer theorem, $[T:T_1]=n$, hence $|\langle z \rangle^T|\leq n$. Since $\pi(\langle z \rangle^T)=\{\langle (i,n+1,n+1) \rangle : 1\leq i\leq n\}$ has $n$ elements, $|\langle z \rangle^T|=n$. By Lemma~\ref{lem:same-action}, $T$ acts on $\langle z \rangle^T$ as it does on $\{1,\ldots,n\}$, and $T_1$ is the stabilizer of $\langle z \rangle$.

  If $t\in T$ and $\bar{t}=1$, then $\bar{g}\bar{t}\bar{g}^{-1}=1$ for $g\in T$. Since $\sgn(\bar{g}\bar{t}\bar{g}^{-1})=1$ and $gtg^{-1}\in T_1$, our relations imply that $z^{gtg^{-1}}=z$, so $(z^g)^t=z^g$. Thus $T$ induces a well-defined action of $\bar{T}$ on $z^T$, and hence also on $\langle z \rangle^T$.

  Consider the actions of $\bar{T}\cap A_n$ on $z^T$ and $\langle z \rangle^T$. The stabilizer of $\langle z \rangle$ is $\bar{T}_1\cap A_n$. Since $z^t=z^{\sgn(t)}$ for $t\in T_1$, each element of $\bar{T}_1\cap A_n$ fixes $z$. Thus $z^{\bar{T}\cap A_n}\cap\langle z \rangle=\{z\}$. Recall that $\bar{T}\cap A_n$ acts transitively on $\langle z \rangle^T$, hence $z^{\bar{T}\cap A_n}$ has exactly one element in common with each element of $\langle z \rangle^T$. Therefore $|z^{\bar{T}\cap A_n}|=n$. By Lemma~\ref{lem:same-action}, $\bar{T}\cap A_n$ acts on $z^{\bar{T}\cap A_n}$ as it does on $\langle z \rangle^{\bar{T}\cap A_n}=\langle z \rangle^T$ and $\{1,\ldots,n\}$, with $\bar{T}_1\cap A_n$ being the stabilizer of $z$.

  If $\bar{T}\not\leq A_n$, then since $\bar{T}\cap A_n$ is transitive, $\bar{T}_1\not\leq A_n$, and thus $z^t=z^{-1}$ for some $t\in T_1$. Therefore $|z^T|=2n$, and $\bar{T}\cap A_n$ has two orbits on $z^T$, namely $z^{\bar{T}\cap A_n}$ and $(z^{-1})^{\bar{T}\cap A_n}$. Otherwise, if $\bar{T}\leq A_n$, then $z^{\bar{T}\cap A_n}=z^T$. In each case $N=\langle z^{\bar{T}\cap A_n} \rangle=\langle z^T \rangle$.

  If $\bar{T}\cap A_n$ acts 2-homogeneously on $z^{\bar{T}\cap A_n}$, then each 2-subset of $z^{\bar{T}\cap A_n}$ is $(\bar{T}\cap A_n)$-conjugate to $\{z,z^w\}$. The relation $(zz^w)^2=1$ also implies $(z^wz)^2=1$, hence $z^{\bar{T}\cap A_n}$ satisfies the relations in \eqref{eq:carmichael}. Thus $N=\langle z^{\bar{T}\cap A_n} \rangle\cong A_{n+2}$ by the simplicity of $A_{n+2}$.

  If $\bar{T}\cap A_n$ does not act 2-homogeneously on $z^{\bar{T}\cap A_n}$ (equivalently $\langle z \rangle^T$), then $\bar{T}\not\leq A_n$, so by Lemma~\ref{lem:2-homogeneous-alternating}(i) $\bar{T}$ acts 2-transitively on $\langle z \rangle^T$. We claim that $N\cong A_{n+2}$.

  Note that $|\bar{T}:\bar{T}\cap A_n|\leq|S_n:A_n|=2$. Since $\bar{T}$ acts transitively on the ordered pairs of distinct elements of $\langle z \rangle^T$, it follows that $\bar{T}\cap A_n$ has at most 2 orbits under this action. In fact there are exactly 2 orbits, since $\bar{T}\cap A_n$ is not 2-homogeneous. Recall that $\bar{T}\cap A_n$ acts transitively on $\langle z \rangle^T$, hence we may choose representatives $(\langle z \rangle, \langle z \rangle^w)$ and $(\langle z \rangle, \langle z \rangle^y)$ of the two orbits for some $y\in\bar{T}\cap A_n$.

  Let $g\in\bar{T}\setminus A_n$ satisfy $(\langle z \rangle, \langle z \rangle^w)^g=(\langle z \rangle, \langle z \rangle^y)$. Since $g\notin A_n$ and $z,z^w\in z^{\bar{T}\cap A_n}$, both $z^g$ and $(z^w)^g$ lie in the other $(\bar{T}\cap A_n)$-class $(z^{-1})^{\bar{T}\cap A_n}$. Thus $z^g=z^{-1}$, and $(z^w)^g=(z^y)^{-1}$ since $z^y\in z^{\bar{T}\cap A_n}$, so $(zz^y)^2=((z^wz)^{-2})^g=1$. Hence $z^{\bar{T}\cap A_n}$ satisfies the relations in \eqref{eq:carmichael}, thus $N\cong A_{n+2}$.

  In all cases $N\cong A_{n+2}$. Clearly $N\unlhd J$ and $J/N=J/\langle z^T \rangle\cong T$. Thus $|J|=|A_{n+2}\times T|$, so $\varphi$ is an isomorphism.
\end{proof}

We elaborate on some applications of Lemma~\ref{2-homogeneous} outlined in \cite[Examples 3.4]{GKKL1}. To produce a presentation of $A_{n+2}$, we find an appropriate group $T$ and apply Lemma~\ref{2-homogeneous}. This produces a presentation which defines a group $\hat{J}\cong A_{n+2}\times T$. By introducing an additional relator whose normal closure in $\hat{J}$ corresponds to the subgroup $1\times T$, we obtain a new presentation which defines a group $J\cong A_{n+2}$.

Our first example is a discussion of presentations of $A_{p+3}\times\SL(2,p)$ and $A_{p+3}$ for primes $p>3$, as described in \cite[Examples 3.4 (1), Examples 3.18 (1)]{GKKL1}.

\begin{example}
  \label{Examples3.18(1)}  
  Let $p>3$ be prime. Now $T\coloneqq\SL(2,p)$ acts 2-homogeneously (in fact, 2-transitively) on the $n\coloneqq p+1$ \emph{projective points} in $\F_p\times\F_p$. Identify the subspace spanned by $(1,\alpha)$ with $\alpha$ for each $\alpha\in\F_p$, and the subspace spanned by $(0,1)$ with the point $\infty$, so that $T$ acts on $\F_p\cup\{\infty\}$. Let $\bar{\phantom{T}}\colon T\to\Sym(\F_p\cup\{\infty\})$ be the induced map. The point $\infty$ plays the role of the point 1 in Lemma~$\ref{2-homogeneous}$. Using \cite{Sun}, Campbell and Robertson \cite{CR2} obtain a 2-generator 2-relator presentation  of $T$. Namely:
  \begin{equation}
    \label{eq:SL(2,p)}
    T = \SL(2,p) = \langle x, y \mid x^2 = (xy)^3,\ (xy^4xy^{(p+1)/2})^2y^px^{2\lfloor p/3 \rfloor} = 1 \rangle.
  \end{equation}
  Matrices in $\SL(2,p)$ that satisfy the relations are
  \begin{equation}
    t \coloneqq (-1)^k\begin{pmatrix} 0 & -1 \\ 1 & 0 \end{pmatrix}, \quad u \coloneqq (-1)^k\begin{pmatrix} 1 & 1 \\ 0 & 1 \end{pmatrix},
    \label{eq:SL(2,p)-generators}
  \end{equation}
  where $p\equiv k\bmod{3}$ and $k\in\{1,2\}$. Moreover, $\SL(2,p)=\langle t,u \rangle$.

  The involution $\bar{t}$ maps $\alpha$ to $-\alpha^{-1}$ for $\alpha\in\F_p^*$ and swaps $0$ with $\infty$. The fixed points of $\bar{t}$ are precisely the square roots of $-1$ in $\F_p^*$. Thus if $p\equiv1\bmod{4}$, then $\bar{t}$ is a product of $(p-1)/2$ transpositions, and if $p\equiv3\bmod{4}$, then $\bar{t}$ is a product of $(p+1)/2$ transpositions. In either case, $\sgn(\bar{t})=1$ and $\epsilon(\bar{t})=0$. The $p$-cycle $\bar{u}=(0,1,\ldots,p-1)$ satisfies $\sgn(\bar{u})=1$ and $\epsilon(\bar{u})=0$.

  Let $\F_p^*=\langle j \rangle$ and $j\jbar\equiv1\bmod{p}$. Let $k$ be as in \eqref{eq:SL(2,p)-generators}. Define the word
  \begin{equation}
    \label{eq:p+3-h}
    h\coloneqq y^\jbar(y^j)^xy^\jbar x^{(-1)^k}.
  \end{equation}
  Let $v\coloneqq u^\jbar(u^j)^tu^\jbar t^{(-1)^k}=(-1)^{jk}\left( \begin{smallmatrix} \jbar & 0 \\ 0 & j \end{smallmatrix} \right)$ be the corresponding element of $\SL(2,p)$. Replacing $j$ by $j-p$ if necessary, we may assume $jk$ is even, so $v=\left( \begin{smallmatrix} \jbar & 0 \\ 0 & j \end{smallmatrix} \right)$. Thus $\bar{v}$ maps $\alpha$ to $j^2\alpha$ for $\alpha\in\F_p^*$ and fixes 0 and $\infty$, so $\bar{v}$ is the product of two disjoint $(p-1)/2$-cycles.

  Now $\langle v \rangle=\left\{\left( \begin{smallmatrix} a^{-1} & 0 \\ 0 & a \end{smallmatrix} \right) : a\in\F_p^*\right\}$, hence $\left( \begin{smallmatrix} 1 & 1 \\ 0 & 1 \end{smallmatrix} \right)=(-1)^ku\in\langle u,v \rangle$. Since $\left( \begin{smallmatrix} a^{-1} & 0 \\ 0 & a \end{smallmatrix}\right)\left( \begin{smallmatrix} 1 & 1 \\ 0 & 1 \end{smallmatrix} \right)^{ab}=\left( \begin{smallmatrix} a^{-1} & b \\ 0 & a \end{smallmatrix} \right)$ and $T_\infty=\left\{ \left( \begin{smallmatrix} a^{-1} & b \\ 0 & a \end{smallmatrix} \right) : a\in\F_p^*,\ b\in\F_p \right\}$, it follows that $T_\infty=\langle u,v \rangle$. (Recall that $\infty$ plays the role of 1 in Lemma~\ref{2-homogeneous}.) Thus we may take $X_\infty=\{y,h\}$. By Lemma~\ref{2-homogeneous} there is a 3-generator 6-relator presentation of $A_{p+3}\times T$. More precisely, let
  \begin{align}
    \label{AltSL2}
    \hat{J} \coloneqq \langle x,y,z \mid &\ x^2 = (xy)^3,\ (xy^4xy^{(p+1)/2})^2y^px^{2\lfloor p/3 \rfloor} = z^3 = (zz^x)^2 = [y,z] = [h,z] = 1 \rangle,
  \end{align}
  where $h$ is defined as in \eqref{eq:p+3-h}. Let $\Omega\coloneqq\F_p\cup\{\infty,\star,\bullet\}$. Define $\hat{\varphi}\colon\{x,y,z\}\to\Alt(\Omega)\times T$ by
  \begin{align}
    \label{eq:Examples3.18(1)}
    \begin{split}
      \hat{\varphi}(x) &\coloneqq \left( \bar{t}, t \right), \\
      \hat{\varphi}(y) &\coloneqq \left( \bar{u}, u \right), \\
      \hat{\varphi}(z) &\coloneqq \left( (\infty,\star,\bullet), 1 \right),
    \end{split}
  \end{align}
  where $t,u$ are the matrices defined in \eqref{eq:SL(2,p)-generators}, and $\bar{t},\bar{u}$ are viewed as permutations on $\Omega$ that fix $\star$ and $\bullet$. Lemma~\ref{2-homogeneous} asserts that $\hat{\varphi}$ extends to an isomorphism $\hat{\varphi}\colon \hat{J}\to\Alt(\Omega)\times T$.

  Let $J$ be the quotient of $\hat{J}$ obtained from \eqref{AltSL2} by imposing the relation $(hz^{xy}z^{xy^j})^{(p+1)/2}=1$. We claim that $J\cong A_{p+3}$. Notice that $\hat{\varphi}(h)=\left( \bar{v}, v \right)$. Furthermore, $\bar{t}\bar{u}$ maps $\infty\mapsto0\mapsto1$ and $\bar{t}\bar{u}^j$ maps $\infty\mapsto0\mapsto j$, while both fix $\star,\bullet$. Hence the first coordinate of $\hat{\varphi}(z^{xy}z^{xy^j})$ is $(\infty,\star,\bullet)^{\bar{t}\bar{u}}(\infty,\star,\bullet)^{\bar{t}\bar{u}^j}=(1,\star,\bullet)(j,\star,\bullet)=(1,\bullet)(j,\star)$. Note that $1$ and $j$ appear in distinct cycles of $\bar{v}$, hence $\bar{v}(1,\bullet)(j,\star)$ is the product of two disjoint $(p+1)/2$-cycles. Therefore
  \[ \hat{\varphi}((hz^{xy}z^{xy^j})^{(p+1)/2}) = (1,v^{(p+1)/2}) = \left( 1, \begin{pmatrix} \jbar^{(p+1)/2} & 0 \\ 0 & j^{(p+1)/2} \end{pmatrix} \right). \]
  Recall that $\PSL(2,p)=T/\{\pm I\}$ is simple for primes $p>3$. Hence, the normal closure of $v^{(p+1)/2}$ in $T$ is $T$. Thus adding the relator $(hz^{xy}z^{xy^j})^{(p+1)/2}$ to the presentation in \eqref{AltSL2} yields a 3-generator 7-relator presentation of $\Alt(\Omega)$. Upon selecting a bijection between $\Omega$ and $\{1,\ldots,p+3\}$, an isomorphism $\Alt(\Omega)\cong A_{p+3}$ is obtained, thus
  \begin{align}
    \label{eq:alt-p+3}
    \begin{split}
      J \coloneqq \langle x,y,z \mid &\ x^2 = (xy)^3,\ (xy^4xy^{(p+1)/2})^2y^px^{2\lfloor p/3 \rfloor} = 1, \\
      &\ z^3 = (zz^x)^2 = [y,z] = [h,z] = 1,\ (hz^{xy}z^{xy^j})^{(p+1)/2} = 1 \rangle \cong A_{p+3},
    \end{split}
  \end{align}
  where $h$ is defined as in \eqref{eq:p+3-h}. In view of the above argument and \eqref{eq:Examples3.18(1)}, an isomorphism $\varphi\colon J\to\Alt(\Omega)$ is defined by $\varphi(x)\coloneqq\bar{t}$, $\varphi(y)\coloneqq\bar{u}$ and $\varphi(z)\coloneqq(\infty,\star,\bullet)$.
\end{example}

\begin{remark}
  \label{rem:p+3-errors}
  Example~\ref{Examples3.18(1)} is based on \cite[Examples 3.18(1)]{GKKL1}, however the latter contains some errors. The matrices $t,u$ in \eqref{eq:SL(2,p)-generators} differ from those defined in \cite{GKKL1}, which we call $t'\coloneqq\left(\begin{smallmatrix}0 & 1 \\ -1 & 0\end{smallmatrix}\right)$ and $u'\coloneqq\left( \begin{smallmatrix} 1 & 1 \\ 0 & 1 \end{smallmatrix} \right)$ respectively. Specifically, $t=(-1)^{k+1}t'$ and $u=(-1)^ku'$. Observe that $t$ and $u$ are scalar multiples of $t'$ and $u'$ respectively, so $\bar{t}=\bar{t}'$ and $\bar{u}=\bar{u}'$ in $\bar{T}$. However, $t'$ and $u'$ do not satisfy the relations in \eqref{eq:SL(2,p)}. Indeed, matrix calculations demonstrate that
  \[ (t')^2 = -(t'u')^3, \quad (t'u'^4t'u'^{(p+1)/2})^2u'^pt'^{2\lfloor p/3 \rfloor} = (-1)^{\lfloor p/3 \rfloor}I_2. \]
  This calculation, together with the observation that when $p$ is an odd prime, $\lfloor p/3 \rfloor$ is even if and only if $p\equiv1\bmod{3}$, explains the presence of the factors of $(-1)^k$ in \eqref{eq:SL(2,p)-generators}.

  In \cite[Examples 3.18 (1)]{GKKL1} a word $h'\coloneqq y^\jbar(y^j)^xy^\jbar x^{-1}$ is used instead of $h$ (cf. \eqref{eq:p+3-h}), and the relator $(h'z^{yx}z^{y^jx})^{(p+1)/2}$ is used instead of $(hz^{xy}z^{xy^j})^{(p+1)/2}$ (cf. \eqref{eq:alt-p+3}). We require $jk$ to be even in Example~\ref{Examples3.18(1)} (see the calculation of $v$), but no comment is made about the parity of $jk$ in \cite{GKKL1}.

  Let $v'\coloneqq u^\jbar(u^j)^tu^\jbar t^{-1}$ be the element of $\SL(2,p)$ corresponding to $h'$. We compute the permutation $\bar{v}'(\infty,\star,\bullet)^{\bar{u}\bar{t}}(\infty,\star,\bullet)^{\bar{u}^j\bar{t}}$ in $\Alt(\Omega)$ corresponding to $h'z^{yx}z^{y^jx}$. Note that $\bar{v}=\bar{v}'$ in $\Alt(\Omega)$. Since $\bar{u},\bar{v}$ fix $\infty$ and $\bar{t}$ swaps $0$ and $\infty$, it follows that $\bar{v}'(\infty,\star,\bullet)^{\bar{u}\bar{t}}(\infty,\star,\bullet)^{\bar{u}^j\bar{t}}=\bar{v}(0,\star,\bullet)^2$. This is the product of two disjoint cycles of lengths $(p-1)/2$ and $(p+3)/2$, and does not have order dividing $(p+1)/2$.

  Thus the final relation is not satisfied by the proposed generators of $\Alt(\Omega)$. Furthermore, a coset enumeration on the first presentation of $A_{p+3}$ provided in \cite[Examples 3.18 (1)]{GKKL1} yields the trivial group for small values of $p>3$.

  It is not sufficient to correct the value of $h'$. Since $\bar{v}=\bar{v}'$ in $\Alt(\Omega)$, the proposed generators of $\Alt(\Omega)$ also do not satisfy the relation $(hz^{yx}z^{y^jx})^{(p+1)/2}=1$. One must use the corrected relator $(hz^{xy}z^{xy^j})^{(p+1)/2}$ as in \eqref{eq:alt-p+3}.

  It is also not sufficient to correct the order of exponents in the final relator without correcting $h'$. Although $v'$ stabilizes $\infty$, in general $\langle u,v' \rangle$ is not equal to $T_\infty$, and may instead be a proper subgroup. This is because $v'$ may differ from $v$ by a sign. In fact, if $k$ is even, then $v'=-v$, regardless of which integer $j$ is chosen with $\F_p^*=\langle j \rangle$. Furthermore, if $p\equiv3\bmod{4}$, then $-j$ has half the order of $j$ in $\F_p^*$. Hence if $p\equiv11\bmod{12}$, then $\langle u,v' \rangle$ is a subgroup of index two in $T_\infty$.

  A computer search verified that in $\SL(2,11)$ and $\SL(2,23)$ there are no matrices $x,y$ satisfying the relations in $\eqref{eq:SL(2,p)}$ such that $\langle y, h'\rangle$ has the same order as $T_\infty$ (here we used $j=2,5$ and $\jbar=6,14$ for $p=11,23$ respectively). Let $p=11$, $j=2$, $\jbar=6$, and let $J'$ be the group defined by \eqref{eq:alt-p+3} with the word $h$ replaced by $h'$. Let
  \[ H' \coloneqq \langle 
  zxz^{-1},\
  (z^{-1}x^{-1}z)^4,\
  z^{-1}x^2zxz^{-1}x^{-2}z,\
  z^{-1}xy^2xzx^{-1}y^{-2}x^{-1}z \rangle \leq J'.
  \]
  Coset enumeration shows that $[J':H']=28$. The induced permutation representation $J'\to S_{28}$ on cosets has image $2^{13}.\ A_{14}$. In particular $J'$ is not isomorphic to $A_{14}$.
\end{remark}

The following example discusses presentations of $A_{p+2}\times\AGL(1,p)$ and $A_{p+2}$ for odd primes $p$, as described in \cite[Examples 3.4 (2), Corollary 3.7]{GKKL1}.

\begin{example}
  \label{ex:Examples 3.4 (2)}
  Let $p$ be an odd prime, and let
  \[ T \coloneqq \AGL(1,p)=\{x\mapsto\alpha x+\beta \mid \alpha\in\F_p^*,\beta\in\F_p\}. \]
  Then $T$ acts $2$-homogeneously (in fact, 2-transitively) on $\F_p$. Let $\F_p^*=\langle r \rangle$ and let $\bar{a},\bar{b}\in T$ be the cycles of order $p$ and $p-1$ respectively, defined by
  \begin{equation}
    \label{eq:AGL-gens}
    \bar{a} \coloneqq (0,1,\ldots,p-1), \qquad \bar{b}\colon x\mapsto rx.
  \end{equation}
  Then $T=\langle \bar{a} \rangle\rtimes\langle \bar{b} \rangle$. Furthermore, the stabilizer $T_0$ of 0 is $\langle \bar{b} \rangle$. Here $0$ plays the role of the point 1 in Lemma~\ref{2-homogeneous}.
  Note that \cite{GKKL1} uses $x\mapsto r^{-1}x$ instead of $x\mapsto rx$; this will not satisfy the relations discussed here since permutations are multiplied in the opposite order.

  By \cite{Neu}, there is a 2-generator 2-relator presentation of $T=\AGL(1,p)$. Specifically, if $s(r-1)\equiv-1\bmod{p}$, then
  \begin{equation}
    \label{eq:AGL(1,p)}
    T = \AGL(1,p) = \langle a,b \mid a^p = b^{p-1},\ (a^s)^b = a^{s-1} \rangle,
  \end{equation}
  where $a,b$ correspond to the generators $\bar{a},\bar{b}$ respectively.
  Now $\sgn(\bar{a})=1$, $\epsilon(\bar{a})=0$, $\sgn(\bar{b})=-1$ and $\epsilon(\bar{b})=1$. By Lemma~\ref{2-homogeneous}, there is a 3-generator 5-relator presentation of $A_{p+2}\times T$. More precisely, let
  \begin{equation}
    \label{eq:Jhat-AGL-p+2}
    \hat{J} \coloneqq \langle a,b,z \mid a^p = b^{p-1},\ (a^s)^b = a^{s-1},\ z^3 = (zz^a)^2 = 1,\ z^b = z^{-1} \rangle.
  \end{equation}
  Let $\Omega\coloneqq\F_p\cup\{\star,\bullet\}$. Define $\hat{\varphi}\colon\{a,b,z\}\to\Alt(\Omega)\times T$ by
  \begin{align}
    \label{eq:Examples 3.4 (2)}
    \begin{split}
      \hat{\varphi}(a) &\coloneqq (\bar{a},\bar{a}), \\
      \hat{\varphi}(b) &\coloneqq (\bar{b}(\star,\bullet),\bar{b}), \\
      \hat{\varphi}(z) &\coloneqq ((0,\star,\bullet),1),
    \end{split}
  \end{align}
  where in the left coordinate $\bar{a},\bar{b}$ are viewed as permutations on $\Omega$ that fix $\star$ and $\bullet$. Lemma~\ref{2-homogeneous} asserts that $\hat{\varphi}$ extends to an isomorphism $\hat{\varphi}\colon \hat{J}\to\Alt(\Omega)\times T$.

  Let $J$ be the quotient of $\hat{J}$ obtained from \eqref{eq:Jhat-AGL-p+2} by imposing the relation $(b^az)^p=1$. We claim that $J\cong A_{p+2}$. Observe that $\hat{\varphi}(b^az)=(\bar{b}^{\bar{a}}(\star,\bullet),\bar{b}^{\bar{a}})((0,\star,\bullet),1)=(\bar{c},\bar{b}^{\bar{a}})$ for a $p$-cycle $\bar{c}$. Thus $\hat{\varphi}((b^az)^p)=(1,\bar{b}^{\bar{a}})$. The normal closure of $\bar{b}^{\bar{a}}$ in $T$ is $T$. Hence, adding the relator $(b^az)^p$ to the presentation in \eqref{eq:Jhat-AGL-p+2} yields a presentation of $\Alt(\Omega)$. Upon selecting a bijection between $\Omega$ and $\{1,\ldots,p+2\}$, an isomorphism $\Alt(\Omega)\cong A_{p+2}$ is obtained, thus
  \begin{equation}
    \label{eq:Corollary-3.7}
    J \coloneqq \langle a,b,z \mid a^p=b^{p-1},\ (a^s)^b = a^{s-1},\ z^3=(zz^a)^2=1,\ z^b=z^{-1},\ (b^az)^p = 1 \rangle \cong A_{p+2}.
  \end{equation}
  In view of the above argument and \eqref{eq:Examples 3.4 (2)}, an isomorphism $\varphi\colon J\to\Alt(\Omega)$ is defined by $\varphi(a)\coloneqq\bar{a}$, $\varphi(b)\coloneqq\bar{b}(\star,\bullet)$ and $\varphi(z)\coloneqq(0,\star,\bullet)$.
\end{example}

The previous two examples apply Lemma~\ref{2-homogeneous} with a 2-transitive group $T$. Next we let $T$ be an index two subgroup of $\AGL(1,p)$ for primes $p\equiv3\bmod{4}$ where $p>3$ that is 2-homogeneous but not 2-transitive on $\F_p$. The following example discusses presentations of $A_{p+2}\times T$ and $A_{p+2}$, as described in \cite[Examples 3.4 (3), Corollary 3.8]{GKKL1}.

\begin{example}
  \label{ex:Examples 3.4 (3)}
  Let $p\equiv3\bmod{4}$ where $p>3$ is prime, and let
  \[ T\coloneqq\AGL(1,p)^{(2)} = \{ x \mapsto \alpha x + \beta \mid \alpha\in\F_p^{*2}, \beta\in\F_p\}. \]
  Then $T$ acts 2-homogeneously on $\F_p$. Let $\F_p^{*2}=\langle r \rangle$, and let $\bar{a},\bar{b}\in T$ be the permutations of order $p$ and $(p-1)/2$ respectively defined by \eqref{eq:AGL-gens}. Observe that $T=\langle \bar{a} \rangle\rtimes\langle \bar{b} \rangle$. Furthermore, the stabilizer $T_0$ of 0 is $\langle \bar{b} \rangle$. Here 0 plays the role of the point 1 in Lemma~\ref{2-homogeneous}. By \cite{Neu}, $T$ has a 2-generator 2-relator presentation. Specifically, if $s(r-1)\equiv-1\bmod{p}$, then
  \begin{equation}
    \label{eq:AGL(1,p)^(2)}
    T = \AGL(1,p)^{(2)} = \langle a,b \mid a^p = b^{(p-1)/2},\ (a^s)^b = a^{s-1} \rangle,
  \end{equation}
  where $a,b$ correspond to the generators $\bar{a},\bar{b}$ respectively. Now $\sgn(\bar{a})=\sgn(\bar{b})=1$, $\epsilon(\bar{a})=\epsilon(\bar{b})=0$. By Lemma~\ref{2-homogeneous}, there is a 3-generator 5-relator presentation of $A_{p+2}\times T$. More precisely, let
  \begin{equation}
    \label{eq:Jhat-AGL(1,p)^(2)}
    \hat{J} \coloneqq \langle a,b,z \mid a^p = b^{(p-1)/2},\ (a^s)^b = a^{s-1},\ z^3 = (zz^a)^2 = 1,\ z^b = z \rangle.
  \end{equation}
  Let $\Omega\coloneqq\F_p\cup\{\star,\bullet\}$. Define $\hat{\varphi}\colon\{a,b,z\}\to\Alt(\Omega)\times T$ by
  \begin{align}
    \label{eq:Examples 3.4 (3)}
    \begin{split}
      \varphi(a) &\coloneqq (\bar{a},\bar{a}), \\
      \varphi(b) &\coloneqq (\bar{b},\bar{b}), \\
      \varphi(z) &\coloneqq ((0,\star,\bullet),1),
    \end{split}
  \end{align}
  where in the left coordinate $\bar{a},\bar{b}$ are viewed as permutations on $\Omega$ that fix $\star$ and $\bullet$. Lemma~\ref{2-homogeneous} asserts that $\hat{\varphi}$ extends to an isomorphism $\hat{\varphi}\colon\hat{J}\to\Alt(\Omega)\times T$.

  Let $J$ be the quotient of $\hat{J}$ obtained from \eqref{eq:Jhat-AGL(1,p)^(2)} by imposing the relation $(bz^az^{a^{-1}})^{(p+1)/2}=1$. We claim that $J\cong A_{p+2}$. Note $\bar{b}$ is the product of two disjoint $(p-1)/2$-cycles, and $1$ and $p-1$ lie in distinct cycles since $p\equiv3\bmod{4}$. Thus $\varphi(bz^az^{a^{-1}})=(\bar{b}(1,\star,\bullet)(p-1,\star,\bullet),\bar{b})=(\bar{c}_1\bar{c}_2,\bar{b})$, where $\bar{c}_1\bar{c}_2$ are disjoint $(p+1)/2$-cycles. Hence $\varphi((bz^az^{a^{-1}})^{(p+1)/2})=(1,\bar{b}^{(p+1)/2})=(1,\bar{b})$. The normal closure of $\bar{b}$ in $T$ is $T$. Hence, adding the relator $(bz^az^{a^{-1}})^{(p+1)/2}$ to the presentation in \eqref{eq:Jhat-AGL(1,p)^(2)} yields a presentation of $\Alt(\Omega)$. Upon selecting a bijection between $\Omega$ and $\{1,\ldots,p+2\}$, an isomorphism $\Alt(\Omega)\cong A_{p+2}$ is obtained, thus
  \begin{equation}
    \label{eq:p+2-abz}
    J \coloneqq \langle a,b,z \mid a^p=b^{(p-1)/2},\ (a^s)^b = a^{s-1},\ z^3 = (zz^a)^2 = 1,\ z^b = z,\ (bz^az^{a^{-1}})^{(p+1)/2} = 1 \rangle \cong A_{p+2}.
  \end{equation}
  In view of the above argument and \eqref{eq:Examples 3.4 (3)}, an isomorphism $\varphi\colon J\to\Alt(\Omega)$ is defined by $\varphi(a)\coloneqq\bar{a}$, $\varphi(b)\coloneqq\bar{b}$ and $\varphi(z)\coloneqq(0,\star,\bullet)$.
\end{example}

\begin{remark}
  All presentations discussed in Examples~\ref{Examples3.18(1)}, \ref{ex:Examples 3.4 (2)} and \ref{ex:Examples 3.4 (3)} have bit-length $O(\log p)$.
\end{remark}

The following is \cite[Corollary 3.8]{GKKL1}. It reduces the number of generators and relations in the previous example.

\begin{theorem}
  \label{thm:alt-p+2}
  For each prime $p\equiv11\bmod{12}$
  \begin{enumerate}[label=(\roman*)]
  \item $A_{p+2}\times\AGL(1,p)^{(2)}$ has a 2-generator 3-relator presentation with bit-length $O(\log p)$.
  \item $A_{p+2}$ has a 2-generator 4-relator presentation with bit-length $O(\log p)$.\qedhere
  \end{enumerate}
\end{theorem}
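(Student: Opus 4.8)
The plan is to perform a Tietze transformation on the presentations obtained in Example~\ref{ex:Examples 3.4 (3)}, exploiting the hypothesis $p\equiv11\bmod{12}$. Note this hypothesis gives $p\equiv3\bmod 4$ (so that $\AGL(1,p)^{(2)}$ and the presentations of Example~\ref{ex:Examples 3.4 (3)} are available) together with $p\equiv2\bmod 3$; in particular $(p-1)/2\equiv2\bmod 3$, $(p+1)/2\equiv0\bmod 3$, $\gcd(3,(p-1)/2)=1$, and $\gcd(p,3(p-1)/2)=1$. In the group $\hat J\cong A_{p+2}\times\AGL(1,p)^{(2)}$ of \eqref{eq:Jhat-AGL(1,p)^(2)} the images of $b$ and $z$ commute (relation $z^b=z$) and have coprime orders $(p-1)/2$ and $3$, so $c\coloneqq bz$ has order $3(p-1)/2$ and $\langle b,z\rangle=\langle c\rangle$; concretely $b=c^{(p+1)/2}$ and $z=c^{-(p-1)/2}$, since $(p+1)/2\equiv1\bmod(p-1)/2$, $(p+1)/2\equiv0\bmod 3$, $-(p-1)/2\equiv0\bmod(p-1)/2$, and $-(p-1)/2\equiv1\bmod 3$. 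All these exponents are $O(p)$, so they contribute only $O(\log p)$ bits.

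I would then eliminate the generators $b$ and $z$ from \eqref{eq:Jhat-AGL(1,p)^(2)} in favour of $c$, substituting $b\mapsto c^{(p+1)/2}$ and $z\mapsto c^{-(p-1)/2}$. Under this substitution the relation $z^b=z$ becomes trivial and is discarded, while the two relations $a^p=b^{(p-1)/2}$ and $z^3=1$ are replaced by the single relation $a^p=c^{3(p-1)/2}$. This yields the candidate for part~(i),
\[
  \langle a,c \mid (a^s)^{c^{(p+1)/2}}=a^{s-1},\ \bigl(c^{-(p-1)/2}(c^{-(p-1)/2})^a\bigr)^2=1,\ a^p=c^{3(p-1)/2} \rangle,
\]
and adjoining the rewritten form of the relator $(bz^az^{a^{-1}})^{(p+1)/2}$ from \eqref{eq:p+2-abz} yields the candidate for part~(ii). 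Both presentations clearly have bit-length $O(\log p)$.

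To verify that the candidate $\hat J'$ in part~(i) is isomorphic to $A_{p+2}\times\AGL(1,p)^{(2)}$ I would use two surjections, as in the proof of Lemma~\ref{2-homogeneous}. First, the assignment $a\mapsto(\bar a,\bar a)$, $c\mapsto(\bar b(0,\star,\bullet),\bar b)$ into $\Alt(\Omega)\times T$ (with $\Omega$, $T$ as in Example~\ref{ex:Examples 3.4 (3)}) respects the three relations---this is checked directly in $\Alt(\Omega)\times T$---and, since it sends $c^{(p+1)/2}\mapsto\hat\varphi(b)$ and $c^{-(p-1)/2}\mapsto\hat\varphi(z)$, its image is $\langle\hat\varphi(a),\hat\varphi(b),\hat\varphi(z)\rangle=\Alt(\Omega)\times T$ by Example~\ref{ex:Examples 3.4 (3)}. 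Second, $a\mapsto a$, $b\mapsto c^{(p+1)/2}$, $z\mapsto c^{-(p-1)/2}$ defines a surjection $\hat J\twoheadrightarrow\hat J'$, provided the five relations of \eqref{eq:Jhat-AGL(1,p)^(2)} become consequences of the three relations of $\hat J'$ (the images of $z^b=z$ and the conjugation and Carmichael relations are immediate). Granting this, $|\hat J'|\le|\hat J|=|A_{p+2}\times T|\le|\hat J'|$, so both maps are isomorphisms and part~(i) follows; part~(ii) then follows either by the same two-surjection argument applied to \eqref{eq:p+2-abz}, or by the normal-closure computation of Example~\ref{ex:Examples 3.4 (3)}, since adjoining the rewritten relator to $\hat J'$ kills precisely the factor $1\times T$.

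The main obstacle is the proviso above: that the images of $a^p=b^{(p-1)/2}$ and of $z^3=1$ under the substitution are consequences of the relations of $\hat J'$. Everything reduces to deriving $a^p=1$ in $\hat J'$: granting this, $c^{3(p-1)/2}=a^p=1$ is the image of $z^3=1$, and then $c^{(p^2-1)/4}=(c^{3(p-1)/2})^{(p+1)/6}=1=a^p$ is the image of $a^p=b^{(p-1)/2}$. Setting $\tilde b\coloneqq c^{(p+1)/2}$, the first and third relations of $\hat J'$ give $(a^s)^{\tilde b}=a^{s-1}$ and $\tilde b^{(p-1)/2}=c^{(p^2-1)/4}=(c^{3(p-1)/2})^{(p+1)/6}=a^{p(p+1)/6}$, so $\langle a,\tilde b\rangle$ is a quotient of $\langle a,b\mid (a^s)^b=a^{s-1},\ b^{(p-1)/2}=a^{p(p+1)/6}\rangle$. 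The crux is to re-run Campbell--Robertson's verification of \eqref{eq:AGL(1,p)^(2)} and confirm that scaling the right-hand exponent $p$ by the unit $(p+1)/6$ (modulo the order of $a$) still yields $\AGL(1,p)^{(2)}$, in which $a$ has order exactly $p$; the coprimalities recorded in the first paragraph are what make this go through, and the conclusion can be corroborated with \textsc{Magma} for small $p$.
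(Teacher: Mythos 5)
Your plan is a genuinely different Tietze transformation from the paper's. You set $c\coloneqq bz$ and eliminate $b,z$ via $b\mapsto c^{(p+1)/2}$, $z\mapsto c^{-(p-1)/2}$, combining $a^p=b^{(p-1)/2}$ and $z^3=1$ into the single relation $a^p=c^{3(p-1)/2}$. The paper instead introduces $g$ with $b\coloneqq g^3$, $z\coloneqq g^{(p-1)/2}$ and simply omits $z^3=1$ and $z^b=z$. The paper's choice has a decisive advantage: the retained relation $a^p=b^{(p-1)/2}$ rewrites as $a^p=(g^3)^{(p-1)/2}$, so the two retained ``$\AGL$'' relations are \emph{verbatim} Neumann's presentation \eqref{eq:AGL(1,p)^(2)} on the subgroup $\langle a,g^3\rangle$, and Lemma~\ref{lem:2.1} applies to it directly, giving the orders of $a$ and $g^3$ with no further work. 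In your $\hat J'$ the first retained relation $a^p=c^{3(p-1)/2}$ does \emph{not} live in $\langle a,c^{(p+1)/2}\rangle$ (in the free group $\gcd\bigl(3(p-1)/2,(p+1)/2\bigr)=3$, not $(p+1)/2$, so $c^{3(p-1)/2}\notin\langle c^{(p+1)/2}\rangle$), which is why Lemma~\ref{lem:2.1} cannot be invoked the same way and you are forced to re-derive the order of $a$ from scratch.

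You correctly identify that everything reduces to deriving $a^p=1$, but the route you propose has a genuine gap. From the relations $(a^s)^{\tilde b}=a^{s-1}$ and $\tilde b^{(p-1)/2}=a^{p(p+1)/6}$ the standard ``Horner/commutator'' trick (noting $\tilde b^{(p-1)/2}$ commutes with $\tilde b$ and computing $(a^{Ns})^{\tilde b}$ two ways with $N=p(p+1)/6$) only yields $a^{p(p+1)/6}=1$, not $a^p=1$; proving that $a$ has order exactly $p$ in that modified two-relator group is a fresh verification that needs, among other things, $\gcd\bigl(s,(p+1)/6\bigr)=1$, which your choice of $s$ does not guarantee. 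So ``re-running'' the argument for \eqref{eq:AGL(1,p)^(2)} --- which is due to Neumann, not Campbell--Robertson --- is not a cosmetic change, and you have not supplied it. Fortunately the gap can be filled by a simpler argument that avoids $\langle a,\tilde b\rangle$ entirely: in $\hat J'$ itself, relation 3' says $a^p=c^{3(p-1)/2}$ is a power of $c$, hence commutes with $c^{(p+1)/2}$; computing $(a^{ps})^{c^{(p+1)/2}}$ two ways, once as $\bigl((a^s)^{c^{(p+1)/2}}\bigr)^p=a^{p(s-1)}$ using relation 1' and once as $\bigl((a^p)^{c^{(p+1)/2}}\bigr)^s=a^{ps}$ using commutativity, gives $a^p=1$ directly. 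With that step supplied your double-surjection framework is correct, though the paper's choice of $g$ is cleaner since no such computation is needed.
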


\begin{proof}
  Consider the groups $\hat{J}$ (cf. \eqref{eq:Jhat-AGL(1,p)^(2)}) and $J$ (cf. \eqref{eq:p+2-abz}) in Example~\ref{ex:Examples 3.4 (3)}. We make the following modifications to the presentations that define these groups:
  \begin{itemize}
  \item Replace generators $b,z$ with one generator $g$, and \emph{define} words $b\coloneqq g^3$, $z\coloneqq g^{(p-1)/2}$.
  \item Omit the relations $z^3=1$ and $z^b=z$.
  \end{itemize}
  We claim these modifications define the same groups. Let $\hat{J}'$ and $J'$ be the presented groups constructed from $\hat{J}$ and $J$ respectively by the above modifications. We begin by defining $\hat{g}\in\Alt(\Omega)\times T$ such that $\hat{g}^3=\hat{\varphi}(b)$ and $\hat{g}^{(p-1)/2}=\hat{\varphi}(z)$, where $\Omega=\F_p\cup\{\star,\bullet\}$, $T=\AGL(1,p)^{(2)}$, and $\hat{\varphi}\colon\hat{J}\to\Alt(\Omega)\times T$ (cf. \eqref{eq:Examples 3.4 (3)}) are as in Example~\ref{ex:Examples 3.4 (3)}.

  Let $\F_p^{*2}=\langle r \rangle$ as in Example~\ref{ex:Examples 3.4 (3)} and define $\bar{b}$ as in \eqref{eq:AGL-gens}. Note that $(p-1,3)=1$, so there exists $\alpha\in\F_p^{*2}$ such that $\alpha^3=r\bmod{p}$. Let $\bar{g}\in T$ be the permutation which maps $x\mapsto\alpha x$. Define $\hat{g}\coloneqq(\bar{g}(0,\star,\bullet)^{-1},\bar{g})\in\Alt(\Omega)\times T$, where $\bar{g}$ is viewed as fixing $\star$ and $\bullet$ in the first coordinate. Since $p\equiv2\bmod{3}$,
  \[ \hat{g}^3 = (\bar{b},\bar{b}) = \hat{\varphi}(b), \quad \hat{g}^{(p-1)/2} = ((0,\star,\bullet),1) = \hat{\varphi}(z). \]
  Hence there is a surjection $\hat{J}'\to\Alt(\Omega)\times T$, where $a\mapsto\hat{\varphi}(a)$, $g\mapsto\hat{g}$, $b=g^3\mapsto\hat{\varphi}(b)$ and $z=g^{(p-1)/2}\mapsto\hat{\varphi}(z)$. By projecting onto the second coordinate and applying Lemma~\ref{lem:2.1}, we may identify the subgroup $\langle a,b \rangle=\langle a,g^3 \rangle$ of $\hat{J}'$ with $T$ (cf. \eqref{eq:AGL(1,p)^(2)}). Thus $b=g^3$ has order $(p-1)/2$, hence $z^3=1$ holds. Since $b=g^3$ and $z=g^{(p-1)/2}$ commute, the relation $z^b=z$ holds. By Lemma~\ref{lem:2.1} we may identify the subgroup $\langle a,b,z \rangle=\langle a, g^3, g^{(p-1)/2} \rangle$ of $\hat{J}'$ with $A_{p+2}\times T$. Since $(3,p-1)=1$, it follows that $\langle a,b,z \rangle=\langle a,g \rangle=\hat{J}'$. Thus $\hat{J}'\cong A_{p+2}\times T$.

  As in Example~\ref{ex:Examples 3.4 (3)}, we obtain $J'$ as a quotient of $\hat{J}'$ by adding a relator whose normal closure in $\hat{J}'$ corresponds to $1\times T$. Thus $J'\cong A_{p+2}$.
\end{proof}

We more explicitly describe these presentations in Theorem~\ref{thm:p+2}. First we outline how similar presentations can be constructed for symmetric groups.

The following is \cite[Lemma 3.12]{GKKL1}. It is an analogue of Lemma~\ref{2-homogeneous} for symmetric groups.
\begin{lemma}
  \label{lem:2-transitive}
  Let $n\geq3$ be an integer, let $T$ be a group with a \emph{2-transitive} action on $\{1,\ldots,n\}$, and let $\bar{}\colon T\twoheadrightarrow\bar{T}\leq S_n$ be the induced surjection. Suppose $T=\langle X\mid R \rangle$, and $T_1=\langle X_1 \rangle$ is the stabilizer of 1, viewing the elements of $X_1$ as words in $X$. Let $w$ be a word in $X$ that moves 1, when viewed as an element of $T$. (In contrast to Lemma~\ref{2-homogeneous}, we do not require that $\bar{w}\in A_n$.) View $S_n$ as a subgroup of $S_{n+2}$ fixing the points $n+1$ and $n+2$. For every $\sigma\in S_n$ write $\sgn(\sigma)=(-1)^{\epsilon(\sigma)}$, where $\epsilon(\sigma)\in\{0,1\}$. Assume that $\bar{T}\not\leq A_n$. Let $T^+$ be the subgroup of index two in $T$ with $\bar{T}^+=\bar{T}\cap A_n$. Define
  \begin{equation}
    \label{eq:2-transitive}
    J \coloneqq \langle X, z \mid R,\ z^3 = 1,\ (zz^w)^2 = 1,\ [z,X_1] = 1 \rangle.
\end{equation}

Then $J$ is isomorphic to a subgroup of index two in $S_{n+2}\times T$ that projects onto each factor, via the mapping $\varphi\colon X\cup\{z\}\to S_{n+2}\times T$ defined by
\begin{align}
    \varphi(x) &\coloneqq (\bar{x},x) \quad \text{for } x\in X, \\
    \varphi(z) &\coloneqq ((1,n+1,n+2),1).\qedhere
\end{align}
\end{lemma}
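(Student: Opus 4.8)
The plan is to follow the proof of Lemma~\ref{2-homogeneous} almost verbatim, with two simplifications and one new wrinkle: because $T$ is assumed $2$-transitive (not merely $2$-homogeneous) the case analysis there about $\bar T\cap A_n$ disappears, and because $\varphi$ carries no sign-correction $(n+1,n+2)^{\epsilon(\cdot)}$ the image lands in a proper (index-two) subgroup of $S_{n+2}\times T$ rather than filling the whole group. First I would verify that the images under $\varphi$ satisfy the defining relations of $J$, so that $\varphi$ extends to a homomorphism $J\to S_{n+2}\times T$. The relators in $R$ are respected because the two coordinate maps $x\mapsto\bar x$ and $x\mapsto x$ are homomorphisms on $T=\langle X\mid R\rangle$; $z^3=1$ is respected since $(1,n+1,n+2)$ is a $3$-cycle; the relations $[z,X_1]=1$ are respected because each $\bar t$ with $t\in X_1$ fixes $1$ (and $n+1,n+2$), hence commutes with $(1,n+1,n+2)$; and $(zz^w)^2=1$ is respected because, writing $m\coloneqq1^{\bar w}\neq1$, one computes $(1,n+1,n+2)(m,n+1,n+2)=(1,n+2)(m,n+1)$, an involution.

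Next I would identify the image $K\coloneqq\varphi(J)$. Exactly as in Lemma~\ref{2-homogeneous}, projecting $\langle\varphi(X)\rangle$ onto the second coordinate is an isomorphism onto $T$ and onto the first coordinate it is $\bar{\phantom{T}}$; and since $\bar T$ is transitive on $\{1,\dots,n\}$, the conjugates $\varphi(z)^{\varphi(t)}=((1^{\bar t},n+1,n+2),1)$ run through all Carmichael generators of $\Alt(\{1,\dots,n+2\})$ as $t$ ranges over $T$, so $\langle\varphi(z)^{\langle\varphi(X)\rangle}\rangle=A_{n+2}\times1$. Hence $K=(A_{n+2}\times1)\cdot D$ with $D\coloneqq\{(\bar t,t):t\in T\}\cong T$; since $(A_{n+2}\times1)\cap D=1$ we get $|K|=|A_{n+2}|\cdot|T|=\tfrac12|S_{n+2}\times T|$, so $K$ has index two, and $K$ projects onto $T$ (through $D$) and onto $S_{n+2}$ (it contains $A_{n+2}\times1$ and some $(\bar t,t)$ with $\bar t\notin A_n$, using $\bar T\not\leq A_n$). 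Concretely $K=\{(\sigma,t):\sigma\in A_{n+2}\Leftrightarrow t\in T^+\}$, which is where $T^+$ enters.

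It remains to show $\varphi\colon J\to K$ is an isomorphism; since $K$ is finite it suffices to prove $|J|\leq|A_{n+2}|\cdot|T|$. Applying Lemma~\ref{lem:2.1} with $\alpha=\varphi$ and the finite group $H=T$ identifies $\langle X\rangle\leq J$ with $T$. Set $N\coloneqq\langle z^T\rangle$; since $z=z^1\in N$ and $N$ is a subgroup, $z$ normalizes $N$, and $T$ normalizes $N$, so $N\unlhd J$ and $N=\langle z^J\rangle$. Adjoining $z=1$ to the presentation of $J$ trivializes the relators $z^3$, $(zz^w)^2$ and $[z,X_1]$, leaving $\langle X\mid R\rangle\cong T$; thus $J/N\cong T$ and $|J|=|N|\cdot|T|$. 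So everything reduces to proving $N\cong A_{n+2}$.

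For that, note the relations $[z,X_1]=1$ extend to $[z,T_1]=1$ because $T_1=\langle X_1\rangle$; hence if $1^{\bar t}=1^{\bar{t'}}$ then $t't^{-1}\in T_1$ and so $z^{t'}=z^t$, i.e.\ $z^t$ depends only on the point $1^{\bar t}$. Define $z_i\coloneqq z^t$ for any $t$ with $1^{\bar t}=i$; then $z_1=z$, $z_m=z^w$, and $N=\langle z_1,\dots,z_n\rangle$ by transitivity. Each $z_i^3=1$, and for distinct $i,j\in\{1,\dots,n\}$ I would use $2$-transitivity to pick $g\in T$ with $1^{\bar g}=i$ and $m^{\bar g}=j$; conjugating $(z_1z_m)^2=(zz^w)^2=1$ by $g$ then yields $(z_iz_j)^2=1$. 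Consequently $x_i\mapsto z_i$ extends Carmichael's presentation \eqref{eq:carmichael} to a surjection $A_{n+2}\twoheadrightarrow N$; since $N\neq1$ (its image under the first coordinate of $\varphi$ is already $A_{n+2}$) and $A_{n+2}$ is simple for $n\geq3$, we get $N\cong A_{n+2}$. Then $|J|=|A_{n+2}|\cdot|T|=|K|$, so $\varphi$ is an isomorphism. The main obstacle is precisely this last step: deducing all Carmichael relations from the single relation $(zz^w)^2=1$ via the well-definedness of the $z_i$. This is where the hypotheses are actually used --- $2$-transitivity rather than $2$-homogeneity, and the precise form of the $z$-relations --- and where one must be attentive to the right-action convention when transporting relations by conjugation.
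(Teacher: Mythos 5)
Your proof is correct and follows essentially the same route as the paper: verify $\varphi$ is a homomorphism, identify its image as the index-two subgroup, use Lemma~\ref{lem:2.1} to embed $T$ in $J$, show $N=\langle z^T\rangle$ is a normal complement with $J/N\cong T$, and deduce $N\cong A_{n+2}$ from Carmichael's presentation via 2-transitivity. The only cosmetic differences are that the paper describes the target subgroup $H$ via even permutations on the disjoint union $\{1,\ldots,n+2\}\sqcup\{1,\ldots,n\}$ rather than as $(A_{n+2}\times 1)D$, and invokes Lemma~\ref{lem:same-action} to transport the $T$-action to $z^T$ instead of your equivalent hands-on well-definedness argument for the $z_i$.
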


\begin{proof}
  View $S_{n+2}\times T$ acting on the disjoint union $\{1,\ldots,n+2\}\sqcup\{1,\ldots,n\}$, and let $H$ be its subgroup of index two that induces a subgroup of $A_{2n+2}$. The image of $\varphi$ satisfies the relations in \eqref{eq:2-transitive}, hence $\varphi$ extends to a homomorphism $\varphi\colon J\to S_{n+2}\times T$. Observe that $\varphi(z)$ and $\varphi(x)$ act as even permutations for each $x\in X$, thus $\varphi(J)\leq H$. We claim that $\varphi(J)=H$. Recall that $T$ acts transitively on $\{1,\ldots,n\}$. Thus $\varphi(z)^{\langle \varphi(X) \rangle}=\{((i,n+1,n+2),1) : 1\leq i\leq n\}$, hence $\langle \varphi(z)^{\langle \varphi(X) \rangle} \rangle=A_{n+2}\times 1$. Therefore $\varphi(J)$ also contains $1\times T^+$.

  Recall that $\bar{T}\not\leq A_n$, hence $\varphi(x)\in H\setminus(A_{n+2}\times T^+)$ for some $x\in X$. The subgroup $A_{n+2}\times T^+$ has index four in $S_{n+2}\times T$, and hence index two in $H$. Therefore $\varphi$ is surjective. Define the surjection $\pi\colon J\to S_{n+2}$ by projecting onto the first coordinate.

  Projecting onto the second coordinate yields an isomorphism $\langle \varphi(X) \rangle\cong T$. Hence by Lemma~\ref{lem:2.1}, we may identify the subgroup $\langle X \rangle$ of $J$ with $T$. The relations $[z,X_1]=1$ ensure that $z^{T_1}=z$. By the orbit-stabilizer theorem, $[T:T_1]=n$, hence $|z^T|\leq n$. Since $\pi(z^T)=\{(m,n+1,n+2) : 1\leq m\leq n\}$ has $n$ elements, $|z^T|=n$. By Lemma~\ref{lem:same-action}, $T$ acts on $z^T$ as it does on $\{1,\ldots,n\}$. The action is 2-transitive, so the relations $z^3=1$ and $(zz^w)^2=1$ imply that $N\coloneqq\langle z^T \rangle\cong A_{n+2}$, by \eqref{eq:carmichael}.

  Clearly $N\unlhd J$ and $J/N=J/\langle z^T \rangle\cong T$. Thus $|J|=|A_{n+2}|\cdot|T|=|H|$, so $\varphi$ is an isomorphism.
\end{proof}

The following example discusses presentations of $S_{p+2}$ and an index two subgroup of $S_{p+2}\times\AGL(1,p)$, as described in \cite[Corollary 3.13]{GKKL1}.
\begin{example}
  \label{ex:Corollary 3.13 (i)}
  Let $p$ be an odd prime, and let $T\coloneqq\AGL(1,p)$. Define $\Omega\coloneqq\F_p\cup\{\star,\bullet\}$, and let $H$ be the index two subgroup of $\Sym(\Omega)\times T$ that induces a subgroup of $\Alt(\Omega\sqcup\F_p)$. Recall the 2-generator 2-relator presentation of $T$ discussed in Example~\ref{ex:Examples 3.4 (2)} (cf. \eqref{eq:AGL(1,p)}). By Lemma~\ref{lem:2-transitive}, there is a 3-generator 5-relator presentation of $H$. More precisely, let $\F_p^*=\langle r \rangle$ and $s(r-1)\equiv-1\bmod{p}$ as in Example~\ref{ex:Examples 3.4 (2)}. Define
  \begin{equation}
    \label{eq:Jhat-AGL-Sym(p+2)}
    \hat{J} \coloneqq \langle a,b,z \mid a^p = b^{p-1},\ (a^s)^b = a^{s-1},\ z^3 = (zz^a)^2 = [z,b] = 1 \rangle.
\end{equation}
  Define $\hat{\varphi}\colon\{a,b,z\}\to H$ by
  \begin{align}
    \label{eq:Corollary 3.13}
    \begin{split}
      \hat{\varphi}(a) &\coloneqq (\bar{a},\bar{a}), \\
      \hat{\varphi}(b) &\coloneqq (\bar{b},\bar{b}), \\
      \hat{\varphi}(z) &\coloneqq ((0,\star,\bullet),1),
    \end{split}
  \end{align}
  where $\bar{a},\bar{b}$ are the permutations defined in \eqref{eq:AGL-gens}, fixing $\star$ and $\bullet$ in the first coordinate. Lemma~\ref{lem:2-transitive} asserts that $\hat{\varphi}$ extends to an isomorphism $\hat{\varphi}\colon\hat{J}\to H$.

  Let $J$ be the quotient of $\hat{J}$ obtained from \eqref{eq:Jhat-AGL-Sym(p+2)} by imposing the relation $(b^2z^az^{a^r})^{(p+1)/2}=1$. We claim that $J\cong S_{p+2}$. Observe that $\varphi(b^2z^az^{a^r})=(\bar{b}^2(1,\star,\bullet)(r,\star,\bullet),\bar{b}^2)=(\bar{c}_1\bar{c}_2,\bar{b}^2)$ for disjoint $(p+1)/2$-cycles $\bar{c}_1,\bar{c}_2$. Hence $\varphi((b^2z^az^{a^r})^{(p+1)/2})=(1,\bar{b}^{p+1})=(1,\bar{b}^2)$. The normal closure of $\bar{b}^2$ in $T$ is $T\cap\Alt(\F_p)$. Hence, adding the relator $(b^2z^az^{a^r})^{(p+1)/2}$ to the presentation in \eqref{eq:Jhat-AGL-Sym(p+2)} yields a presentation of $\Sym(\Omega)$. Upon selecting an bijection between $\Omega$ and $\{1,\ldots,p+2\}$, an isomorphism $\Sym(\Omega)\cong S_{p+2}$ is obtained, thus
  \begin{equation}
    J \coloneqq \langle a,b,z \mid a^p=b^{p-1},\ (a^s)^b = a^{s-1},\ z^3 = (zz^a)^2 = [z,b] = 1,\ (b^2z^az^{a^r})^{(p+1)/2} = 1 \rangle \cong S_{p+2}.
  \end{equation}
  In view of the above argument and \eqref{eq:Corollary 3.13}, an isomorphism $\varphi\colon J\to\Sym(\Omega)$ is defined by $\varphi(a)\coloneqq\bar{a}$, $\varphi(b)\coloneqq\bar{b}$ and $\varphi(z)\coloneqq(0,\star,\bullet)$.
\end{example}

The following is \cite[Corollary 3.13 (ii) and (iii)]{GKKL1}, an analogue of Theorem~\ref{thm:alt-p+2} for symmetric groups.

\begin{theorem}
  \label{thm:sym-p+2}
  Let $p\equiv2\bmod{3}$ where $p>3$ is prime.
  \begin{enumerate}[label=(\roman*)]
  \item The index two subgroup of $S_{p+2}\times\AGL(1,p)$ which induces a subgroup of $A_{2p+2}$ has a 2-generator 3-relator presentation with bit-length $O(\log p)$.
  \item $S_{p+2}$ has a 2-generator 4-relator presentation with bit-length $O(\log p)$.\qedhere
  \end{enumerate}
\end{theorem}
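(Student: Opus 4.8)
The plan is to adapt the proof of Theorem~\ref{thm:alt-p+2} essentially verbatim, using Example~\ref{ex:Corollary 3.13 (i)} in place of Example~\ref{ex:Examples 3.4 (3)}. So let $\hat{J}$ be the group \eqref{eq:Jhat-AGL-Sym(p+2)}, let $J$ be its quotient by the relator $(b^2z^az^{a^r})^{(p+1)/2}$, set $\Omega=\F_p\cup\{\star,\bullet\}$ and $T=\AGL(1,p)$, and let $\hat{\varphi}\colon\hat{J}\to H$ be the isomorphism of Example~\ref{ex:Corollary 3.13 (i)} onto the index two subgroup $H$ of $\Sym(\Omega)\times T$ inducing a subgroup of $\Alt(\Omega\sqcup\F_p)$; here $\Sym(\Omega)\cong S_{p+2}$ and $|\Omega\sqcup\F_p|=2p+2$. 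In the presentations of $\hat{J}$ and $J$ I replace the generators $b,z$ by a single generator $g$, \emph{define} $b\coloneqq g^3$ and $z\coloneqq g^{p-1}$, and \emph{omit} the relations $z^3=1$ and $[z,b]=1$. Writing $\hat{J}'$, $J'$ for the resulting presented groups — on the two generators $a,g$, with $3$ and $4$ relations respectively — it suffices to prove $\hat{J}'\cong H$ for (i) and $J'\cong S_{p+2}$ for (ii).

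The heart of the matter is to exhibit $\hat{g}\in H$ with $\hat{g}^3=\hat{\varphi}(b)$ and $\hat{g}^{p-1}=\hat{\varphi}(z)$. This is where the hypothesis $p\equiv2\bmod3$ enters: it forces $\gcd(3,p-1)=1$, so the fixed generator $r$ of $\F_p^*$ is a cube, say $\gamma^3=r$ with $\gamma\in\F_p^*$, and then $\gamma$ is also a generator of $\F_p^*$. Let $\bar{g}\in T$ be the map $x\mapsto\gamma x$, regarded in the first coordinate as a permutation of $\Omega$ fixing $\star,\bullet$ (hence fixing $0,\star,\bullet$ and commuting with $(0,\star,\bullet)$), and put $\hat{g}\coloneqq(\bar{g}(0,\star,\bullet),\bar{g})$. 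One checks that $\hat{g}\in H$ and, using that $\bar{g}$ has order $p-1$, that $\gamma^3=r$, and that $p-1\equiv1\bmod3$,
\[ \hat{g}^3=(\bar{b},\bar{b})=\hat{\varphi}(b),\qquad \hat{g}^{p-1}=\bigl((0,\star,\bullet),1\bigr)=\hat{\varphi}(z). \]
Consequently $a\mapsto\hat{\varphi}(a)$, $g\mapsto\hat{g}$ defines a homomorphism $\hat{J}'\to H$ whose image contains $\hat{g}^3=\hat{\varphi}(b)$ and $\hat{g}^{p-1}=\hat{\varphi}(z)$, hence all of $H$; it is surjective.

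The rest is the bookkeeping from the proof of Theorem~\ref{thm:alt-p+2}. Composing the surjection $\hat{J}'\to H$ with the projection $H\to T$ and applying Lemma~\ref{lem:2.1} (taking $a,b$ as distinguished generators with the defining relations of $T$ from \eqref{eq:AGL(1,p)}, after rewriting the presentation of $\hat{J}'$ with $b=g^3$, $z=g^{p-1}$ adjoined) identifies $\langle a,g^3\rangle=\langle a,b\rangle\leq\hat{J}'$ with $T$; in particular $g^3=b$ has order $p-1$, so the omitted relation $z^3=g^{3(p-1)}=1$ holds in $\hat{J}'$, and $[z,b]=[g^{p-1},g^3]=1$ holds trivially. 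Thus there is a surjection $\hat{J}\twoheadrightarrow\hat{J}'$ sending $a\mapsto a$, $b\mapsto g^3$, $z\mapsto g^{p-1}$, whose image is $\langle a,g^3,g^{p-1}\rangle=\langle a,g\rangle=\hat{J}'$ because $\gcd(3,p-1)=1$; combining this with the surjection $\hat{J}'\twoheadrightarrow H$ and the isomorphism $\hat{J}\cong H$ (a finite group) forces $\hat{J}'\cong H$, which is (i). For (ii), exactly as in Example~\ref{ex:Corollary 3.13 (i)}, the image in $H$ of the adjoined relator $(b^2z^az^{a^r})^{(p+1)/2}=(g^6(g^{p-1})^a(g^{p-1})^{a^r})^{(p+1)/2}$ is $(1,\bar{b}^2)$, whose normal closure in $H$ is $1\times(T\cap\Alt(\F_p))$, i.e.\ the kernel of the projection $H\to\Sym(\Omega)$; hence $J'\cong\Sym(\Omega)\cong S_{p+2}$. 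Every exponent that appears ($p$, $p-1$, $s$, $r$, $(p+1)/2$, and the fixed constants) has $O(\log p)$ bits, and there are boundedly many relators of bounded word-complexity, so both presentations have bit-length $O(\log p)$. The one genuine obstacle is the construction and verification of $\hat{g}$: making the two powers land precisely on $\hat{\varphi}(b)$ and $\hat{\varphi}(z)$ is exactly what needs $p\equiv2\bmod3$ (once to make $r$ a cube, so $\bar{g}$ has order $p-1$, and once so that $(0,\star,\bullet)^{p-1}=(0,\star,\bullet)$); no further congruence on $p$ beyond oddness is required, which is why the symmetric version asks only for $p\equiv2\bmod3$ rather than the stronger $p\equiv11\bmod12$ of Theorem~\ref{thm:alt-p+2}.
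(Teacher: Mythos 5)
Your argument is correct and is exactly the paper's proof: the paper states that one rewrites $b=g^3$, $z=g^{p-1}$ in Example~\ref{ex:Corollary 3.13 (i)}, omits $z^3=1$ and $[z,b]=1$, and then repeats the proof of Theorem~\ref{thm:alt-p+2}, which is precisely what you have carried out in detail (including the correct choice $\hat g=(\bar g(0,\star,\bullet),\bar g)$, which matches $\hat\varphi(g)$ in Remark~\ref{rem:p+2-isomorphisms} since $\kappa=p-1\equiv1\bmod 3$, as opposed to the $(0,\star,\bullet)^{-1}$ needed when $\kappa=(p-1)/2\equiv 2\bmod 3$). Your explanation of where $p\equiv2\bmod3$ is used, and why no stronger congruence is required in the symmetric case, is also accurate.
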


\begin{proof}
  As in the proof of Theorem~\ref{thm:alt-p+2} we modify the presentations $J$ and $\hat{J}$ in Example~\ref{ex:Corollary 3.13 (i)} by rewriting generators $b$ and $z$ in terms of a single generator $g$, where $b\coloneqq g^3$ and $z\coloneqq g^{p-1}$, and omitting the relations $z^3=1$, $[z,b]=1$. A proof almost identical to that of Theorem~\ref{thm:alt-p+2} shows that these presentations are correct.
\end{proof}

The following provides more explicit descriptions of the presentations described in Theorems~\ref{thm:alt-p+2} and \ref{thm:sym-p+2}.

\begin{theorem}
  \label{thm:p+2}
  If $p\equiv11\bmod{12}$ is prime and $G\coloneqq A_{p+2}$, or if $p\equiv2\bmod{3}$ where $p>2$ is prime and $G\coloneqq S_{p+2}$, then $G$ has a 2-generator 4-relator presentation with bit-length $O(\log p)$. More precisely,
  \begin{equation}
    \label{eq:p+2}
    J \coloneqq \langle a, g \mid a^p = b^\kappa,\ (a^s)^b = a^{s-1},\ (zz^a)^2=1,\ h=1 \rangle \cong G,
\end{equation}
  where one of the following holds:
  \begin{itemize}
  \item $G\coloneqq A_{p+2}$, $p\equiv11\bmod{12}$ is prime,
    $\F_p^{*2}=\langle r \rangle$,
    $s(r-1)\equiv-1\bmod{p}$, $\kappa\coloneqq(p-1)/2$, $b\coloneqq g^3$,
    $z\coloneqq g^\kappa$, and $h\coloneqq(bz^az^{a^{-1}})^{(p+1)/2}$ (cf. Theorem~\ref{thm:alt-p+2}(ii)).
  \item $G\coloneqq S_{p+2}$, $p\equiv2\bmod{3}$ is an odd prime, $\F_p^*=\langle r \rangle$, $s(r-1)\equiv-1\bmod{p}$, $\kappa\coloneqq p-1$, $b\coloneqq g^3$, $z\coloneqq g^\kappa$, and $h\coloneqq(b^2z^az^{a^r})^{(p+1)/2}$ (cf. Theorem~\ref{thm:sym-p+2}(ii)).
  \end{itemize}
  These conditions are summarized in Table~\ref{tab:p+2}.
  \begin{table}[h]
    \centering
    \caption{Various definitions for presentations of $A_{p+2}$ and $S_{p+2}$.}
    \label{tab:p+2}
    \bgroup
    \def\arraystretch{1.4}
    \begin{tabular}{|c|c|}
      \hline
      $A_{p+2}$ & $S_{p+2}$ \\
      \hline
      \hline
      $p\equiv11\bmod{12}$ is prime & $p\equiv2\bmod{3}$, $p>2$ is prime \\
      \hline
      $\F_p^{*2}=\langle r \rangle$ & $\F_p^*=\langle r \rangle$ \\
      \hline
      \multicolumn{2}{|c|}{$s(r-1)\equiv-1\bmod{p}$} \\
      \hline
      $\kappa\coloneqq(p-1)/2$ & $\kappa\coloneqq p-1$ \\
      \hline
      \multicolumn{2}{|c|}{$b\coloneqq g^3$} \\
      \multicolumn{2}{|c|}{$z\coloneqq g^\kappa$} \\
      \hline
      $h\coloneqq(bz^az^{a^{-1}})^{(p+1)/2}$ & $h\coloneqq(b^2z^az^{a^r})^{(p+1)/2}$ \\
      \hline
    \end{tabular}
    \egroup
  \end{table}
  
  Furthermore, define $T$ by
  \begin{equation}
    \label{eq:p+2-T}
    T \coloneqq
    \begin{cases}
      \AGL(1,p)^{(2)} & \text{if } G = A_{p+2}, \\
      \AGL(1,p) & \text{if } G = S_{p+2}.
    \end{cases}
  \end{equation}
  Let $\hat{G}$ be the unique index two subgroup of $S_{p+2}\times T$, which induces a subgroup of $A_{2p+2}$ when we view $S_{p+2}\times T$ acting on the disjoint union $\{1\,\ldots,p+2\}\sqcup\{1,\ldots,p\}$. For example, if $G=A_{p+2}$, then $\hat{G}=G\times T$. Then $\hat{G}$ has a 2-generator 3-relator presentation with bit-length $O(\log p)$. More precisely, omit the relation $h=1$ from $\eqref{eq:p+2}$, and let $\hat{J}$ be the resulting group. Then $\hat{J}\cong\hat{G}$ (cf. Theorems~\ref{thm:alt-p+2}(i) and \ref{thm:sym-p+2}(i)).
\end{theorem}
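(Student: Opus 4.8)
The plan is to recognize the presentation \eqref{eq:p+2} as a transcription of presentations already established: written on a single generator $g$ with the abbreviations $b\coloneqq g^3$ and $z\coloneqq g^\kappa$, it is exactly the presentation produced inside the proof of Theorem~\ref{thm:alt-p+2}(ii) when $G=A_{p+2}$, and inside the proof of Theorem~\ref{thm:sym-p+2}(ii) when $G=S_{p+2}$. Once this identification is made, $J\cong G$ is immediate, and the ``furthermore'' clause follows from the intermediate groups appearing in those proofs.

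First I would treat the alternating case. Starting from the presentation \eqref{eq:p+2-abz} of $A_{p+2}$ in Example~\ref{ex:Examples 3.4 (3)}, substitute $b=g^3$ and $z=g^{(p-1)/2}$ and delete the relations $z^3=1$ and $z^b=z$---precisely the modification carried out in the proof of Theorem~\ref{thm:alt-p+2}(ii). Since $\kappa=(p-1)/2$, what remains is exactly \eqref{eq:p+2} with $h=(bz^az^{a^{-1}})^{(p+1)/2}$, namely the group $J'$ of that proof, so $J\cong A_{p+2}$. For the symmetric case I do the same starting from the presentation of $S_{p+2}$ in Example~\ref{ex:Corollary 3.13 (i)}, now with $b=g^3$, $z=g^{p-1}$, $\kappa=p-1$ and $h=(b^2z^az^{a^r})^{(p+1)/2}$, deleting $z^3=1$ and $[z,b]=1$; then $J\cong S_{p+2}$ by Theorem~\ref{thm:sym-p+2}(ii). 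The bit-length claim is settled by inspection: every exponent occurring---$p$, $\kappa$, $s$, $r$ and $(p+1)/2$---is $O(p)$, hence contributes $O(\log p)$ bits in binary, and there are only two generators and four relators.

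For the ``furthermore'' clause, deleting the relator $h=1$ from \eqref{eq:p+2} leaves exactly the presentation $\hat{J}'$ of the proof of Theorem~\ref{thm:alt-p+2}(i) (respectively its symmetric analogue from the proof of Theorem~\ref{thm:sym-p+2}(i)), which is shown there to be isomorphic to $A_{p+2}\times T$ (respectively to the index two subgroup $H$ of $S_{p+2}\times\AGL(1,p)$ that induces a subgroup of $A_{2p+2}$). In the symmetric case $H$ is $\hat{G}$ by definition. In the alternating case I observe that $\bar a$ and $\bar b$ are even permutations of $\F_p$ (Example~\ref{ex:Examples 3.4 (3)}), so $T=\AGL(1,p)^{(2)}=\langle\bar a,\bar b\rangle\leq\Alt(\F_p)$; hence $(\sigma,\tau)\in S_{p+2}\times T$ is an even permutation of $\{1,\ldots,p+2\}\sqcup\{1,\ldots,p\}$ if and only if $\sigma\in A_{p+2}$, so $\hat{G}=A_{p+2}\times T$ and $\hat{J}\cong\hat{G}$, again of bit-length $O(\log p)$.

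No genuinely new argument is required---the content is bookkeeping---and the one place needing care is the symmetric case, because Theorem~\ref{thm:sym-p+2} only asserts that its proof ``is almost identical'' to that of Theorem~\ref{thm:alt-p+2}. There I would spell out the preimage of $g$: take $\bar g\colon x\mapsto\alpha x$ with $\alpha\in\F_p^*$ satisfying $\alpha^3=r$ (such $\alpha$ exists and has order $p-1$ since $\gcd(3,p-1)=1$, using $p\equiv2\bmod 3$), and put $\hat g\coloneqq(\bar g\,(0,\star,\bullet)^m,\bar g)$ with $m$ chosen so that $m\kappa\equiv1\bmod 3$; since $\bar g$ fixes $0$, $\star$, $\bullet$ and therefore commutes with $(0,\star,\bullet)$, one checks $\hat g^3=\hat\varphi(b)$ and $\hat g^\kappa=\hat\varphi(z)$, after which the argument of Theorem~\ref{thm:alt-p+2} transfers verbatim. (The alternating case uses the same recipe, with $\kappa=(p-1)/2$ and $3\nmid\kappa$ because $p\equiv11\bmod 12$.)
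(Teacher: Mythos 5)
Your proposal is correct and takes essentially the same approach as the paper: Theorem~\ref{thm:p+2} is stated there without its own proof (it is presented as an explicit transcription of Theorems~\ref{thm:alt-p+2} and \ref{thm:sym-p+2}, flagged by the ``cf.''\ references), and your argument reproduces exactly the bookkeeping that identification requires. Your last paragraph spelling out the preimage $\hat g$ in the symmetric case is a useful expansion of the paper's terse ``almost identical'' remark in the proof of Theorem~\ref{thm:sym-p+2}, and is consistent with the explicit choice $(p,p+1,p+2)^\kappa$ made in Remark~\ref{rem:p+2-isomorphisms} (which corresponds to taking $m=\kappa$, valid since $\kappa^2\equiv1\bmod3$ in both cases).
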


\begin{remark}
  \label{rem:p+2-isomorphisms}
  Adopt the notation of Theorem~\ref{thm:p+2}. Explicit isomorphisms $J\cong G$ and $\hat{J}\cong\hat{G}$ may be defined as follows. View $G$ as acting on $\F_p\cup\{p+1,p+2\}$, using the representatives $\{1,2,\ldots,p\}$ of $\F_p$. Let $\alpha\in\F_p^*$ be such that $\alpha^3=r$; such an $\alpha$ exists because $(3,p-1)=1$. Define $\hat{\varphi}\colon\{a,g\}\to\hat{G}$ by
  \begin{align}
    \label{eq:p+2-phi-hat}
    \begin{split}
      \hat{\varphi}(a) &\coloneqq ((1,2,\ldots,p),(1,2,\ldots,p)), \\
      \hat{\varphi}(g) &\coloneqq ((x\mapsto\alpha x : x\in\F_p^*)(p,p+1,p+2)^\kappa,(x\mapsto\alpha x : x\in\F_p^*)).
    \end{split}
  \end{align}
  Let $\varphi\colon\{a,g\}\to G$ be the composition of $\hat{\varphi}\colon\{a,g\}\to\hat{G}$ with the projection $\hat{G}\twoheadrightarrow G$ onto the first coordinate. More explicitly, define
  \begin{align}
    \label{eq:p+2-phi}
    \begin{split}
      \varphi(a) &\coloneqq (1,2,\ldots,p), \\
      \varphi(g) &\coloneqq (x\mapsto\alpha x : x\in\F_p^*)(p,p+1,p+2)^\kappa.
    \end{split}
  \end{align}
  Then $\varphi$ and $\hat{\varphi}$ extend to isomorphisms $\varphi\colon J\to G$ and $\hat{\varphi}\colon\hat{J}\to\hat{G}$ (cf. Theorems~\ref{thm:alt-p+2} and \ref{thm:sym-p+2}).
  A calculation shows that $\bar{b}\coloneqq\varphi(b)=(x\mapsto rx:x\in\F_p^*)$, $\varphi(z)=(p,p+1,p+2)$ and
  \[
    \hat{\varphi}(h) =
    \begin{cases}
      (1,\bar{b}) & \text{if $G=A_{p+2}$}, \\
      (1,\bar{b}^2) & \text{if $G=S_{p+2}$.}
    \end{cases}
  \]
  In particular, 3 does not divide the order of $\hat{\varphi}(h)$, because $(3,p-1)=1$.
\end{remark}

\begin{remark}
  \label{rem:compatible}
  Adopt the notation of Theorem~\ref{thm:p+2} and Remark~\ref{rem:p+2-isomorphisms}. Let $N$ be the normal closure of $\hat{\varphi}(h)$ in $\hat{G}$. Then $T=1\times (T\cap A_p)$ (note that $T=T\cap A_p$ if $G=A_{p+2}$). Thus $N$ is the kernel of the surjection $\pi_G\colon\hat{G}\twoheadrightarrow G$ given by projecting onto the first coordinate. Hence the following diagram commutes:
  \begin{center}
    \begin{tikzcd}
      \hat{J} \arrow[dd, "\pi_J", two heads] \arrow[rr, "\hat{\varphi}", two heads, hook] &  & \hat{G} \arrow[dd, "\pi_G", two heads] \\
      &  &                    \\
      J \arrow[rr, "\varphi", two heads, hook]                        &  & G                 
    \end{tikzcd}
  \end{center}
  where $\pi_J\colon\hat{J}\to J$ is the surjection induced by imposing the relation $h=1$.
\end{remark}

In what follows, we require a special case of a group-theoretic version of ``Horner's rule'' (cf. \cite[p.\ 512]{BKL}). For every two elements $v,f$ in a group, and every positive integer $n$
\begin{equation}
  \label{eq:horner}
  vv^fv^{f^2}\cdots v^{f^n} = (vf^{-1})^nvf^n.
\end{equation}

The following is \cite[Remarks 3.11 and 3.16]{GKKL1}, and is important for later results. In particular it is used to `glue' the presentations of Theorem~\ref{thm:p+2}.

\begin{lemma}
  \label{rem:p+2-cycle-bit-length}
  As in Theorem~\ref{thm:p+2}, suppose that either $G=A_{p+2}$ with $p\equiv11\bmod{12}$ prime, or $G=S_{p+2}$ with $p\equiv2\bmod{3}$ where $p>2$ is prime. Let $\varphi\colon J\to G$ be the isomorphism defined in Remark~\ref{rem:p+2-isomorphisms}. In $G$, even permutations of bounded support and cycles $(i,i+1,\ldots,j)$ with $j-i$ even have bit-length $O(\log p)$ in $\{\varphi(a),\varphi(g)\}$. If $p\equiv11\bmod{12}$ and $G=S_{p+2}$, then, in $G$, odd permutations of bounded support and cycles $(i,i+1,\ldots,j)$ with $j-i$ odd also have bit-length $O(\log p)$ in $\{\varphi(a),\varphi(g)\}$.
\end{lemma}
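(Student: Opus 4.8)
The plan is to realise every element concretely via the explicit isomorphism of Remark~\ref{rem:p+2-isomorphisms}, under which $\varphi(a)=(1,2,\ldots,p)$ (translation by $1$ on $\F_p$, with representatives $\{1,\ldots,p\}$, fixing $p+1,p+2$), $\varphi(b)=\varphi(g)^3$ acts on $\F_p^*$ as $x\mapsto rx$, and $\varphi(z)=\varphi(g)^\kappa=(p,p+1,p+2)$. The basic ingredients are the $3$-cycles $\varphi(z)^{\varphi(a)^m}=(m,p+1,p+2)$ for $m\in\{1,\ldots,p\}$, each a word $\varphi(a)^{-m}\varphi(z)\varphi(a)^m$ of bit-length $O(\log p)$. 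I would first dispose of \emph{even permutations $\rho$ of bounded support}: such a $\rho$ lies in $\Alt(\operatorname{supp}(\rho)\cup\{p+1,p+2\})$, a group of bounded order generated by those $(m,p+1,p+2)$ with $m\in\operatorname{supp}(\rho)$ (cf.\ \eqref{eq:carmichael}), so $\rho$ is a bounded product of these $3$-cycles and thus has bit-length $O(\log p)$. In the $A_{p+2}$ case all the words that occur below are even, so this and the next step suffice.

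For the odd-length consecutive cycles it is enough to treat $(1,2,\ldots,k)$ with $k$ odd, since $(i,\ldots,j)=(1,\ldots,j-i+1)^{\varphi(a)^{i-1}}$ when $j\le p$, and a cycle reaching $p+1$ or $p+2$ differs from one of this form by a bounded-support even permutation. Set $v\coloneqq(1,2)(p+1,p+2)$ (even, bounded support, hence already handled) and $f\coloneqq\varphi(a)$, so $v^{f^m}=(m+1,m+2)(p+1,p+2)$. As $(p+1,p+2)$ commutes with each $(m+1,m+2)$,
\[
  v\,v^f v^{f^2}\cdots v^{f^{k-2}}=(1,2)(2,3)\cdots(k-1,k)\cdot(p+1,p+2)^{k-1}=(1,2,\ldots,k)^{-1},
\]
using $(1,2)(2,3)\cdots(k-1,k)=(1,2,\ldots,k)^{-1}$ (the paper's convention) and that $k-1$ is even. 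By Horner's rule \eqref{eq:horner} the left-hand side equals $(v\varphi(a)^{-1})^{k-2}v\varphi(a)^{k-2}$, of bit-length $O(\log p)$.

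Now assume $G=S_{p+2}$ and $p\equiv11\bmod{12}$. The key point is to produce \emph{one} transposition of bit-length $O(\log p)$; granting this, an arbitrary odd permutation of bounded support is that transposition times an even permutation of bounded support, while rerunning the Horner computation above with $k$ \emph{even} gives $v\,v^f\cdots v^{f^{k-2}}=(1,\ldots,k)^{-1}(p+1,p+2)$ --- the single surviving $(p+1,p+2)$ no longer cancels --- so $(1,\ldots,k)$ is $(p+1,p+2)$ times a short word, and $(p+1,p+2)$ itself is our transposition times an even permutation of bounded support. To construct the transposition I would exploit that $\varphi(b)=(1,r,r^2,\ldots,r^{p-2})$ is a \emph{single} $(p-1)$-cycle of $\F_p^*$ (because $\F_p^*=\langle r\rangle$). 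Taking $v\coloneqq(1,r)(p+1,p+2)$ (even, bounded support) and $f\coloneqq\varphi(b)$, the same telescoping gives $v\,v^f v^{f^2}\cdots v^{f^{p-4}}=(1,r,\ldots,r^{p-3})^{-1}$, which by Horner's rule \eqref{eq:horner} is a word of bit-length $O(\log p)$ for the inverse of the $(p-2)$-subcycle $\sigma\coloneqq(1,r,\ldots,r^{p-3})$; and a short direct computation shows $\varphi(b)\,\sigma^{-1}=(r^{p-3},r^{p-2})$, a transposition, which therefore has bit-length $O(\log p)$.

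The only substantive obstacle is this last extraction of a short transposition out of elements that are otherwise even or of large support; everything else is bookkeeping with Horner's rule \eqref{eq:horner}, conjugation by powers of $\varphi(a)$, and multiplication by bounded-support permutations, resting on the trivial fact that alternating groups of bounded degree have bounded diameter in their Carmichael generators.
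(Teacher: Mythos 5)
Your proof is correct, and for the key step --- producing a short word for a transposition in the $S_{p+2}$ case --- it takes a genuinely different and arguably simpler route than the paper. The paper and you both begin the same way: reduce bounded-support even permutations to bounded products of the $3$-cycles $z^{a^m}\equiv(m,p+1,p+2)$, and use Horner's rule \eqref{eq:horner} to encode long consecutive cycles (the paper via the word $a^{-j}(ax)^{j-i}a^i$ with $x\equiv(1,p)(p+1,p+2)$, you via a telescope along $v=(1,2)(p+1,p+2)$ and $f=\varphi(a)$ --- these are the same idea in mildly different clothing). The divergence is in the transposition. The paper takes $b_2=b^{(p-1)/2}\equiv(1,p-1)\cdots\big(\tfrac{p-1}{2},\tfrac{p+1}{2}\big)$, builds a carefully chosen $c_\bullet$ that is a product of two $\tfrac{p-1}{2}$-cycles, and Horner-telescopes $d\equiv(1,p-1)(p+1,p+2)$ along $c_\bullet$; for the stray $(p+1,p+2)$ to survive, the number of factors $\tfrac{p-1}{2}$ must be \emph{odd}, i.e.\ $p\equiv3\bmod4$, which is precisely the reason the lemma carries the extra hypothesis $p\equiv11\bmod12$. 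Your construction instead telescopes $(1,r)(p+1,p+2)$ along the full $(p-1)$-cycle $\varphi(b)=(1,r,\ldots,r^{p-2})$ to extract the $(p-2)$-subcycle $\sigma=(1,r,\ldots,r^{p-3})$, and then observes $\varphi(b)\sigma^{-1}=(r^{p-3},r^{p-2})$. This is shorter, avoids the auxiliary $c_\bullet$, and --- since the $(p-3)$ copies of $(p+1,p+2)$ cancel for \emph{every} odd $p$ --- works for all odd primes $p\equiv2\bmod3$, not just $p\equiv11\bmod12$; so you actually prove slightly more than the lemma asserts.

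Two small points worth tightening. First, the sentence ``a cycle reaching $p+1$ or $p+2$ differs from one of this form by a bounded-support even permutation'' is correct but conceals a case split: for $j=p+2$ one uses $(i,\ldots,p+2)=(i,\ldots,p)\cdot(i,p+1,p+2)$, where $p-i$ is still even; for $j=p+1$ one must instead use $(i,\ldots,p+1)=(i,\ldots,p-1)\cdot(i,p,p+1)$, since $(i,\ldots,p)$ would be an odd cycle here. Both work, but the $j=p+1$ case needs the explicit decomposition. Second, you should note that $\varphi(b)$ really is a single $(p-1)$-cycle on $\F_p^*$ exactly because in the $S_{p+2}$ case $r$ generates $\F_p^*$ (this fails in the $A_{p+2}$ case, where $r$ generates only $\F_p^{*2}$ --- but of course there you correctly do not attempt to produce odd permutations). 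These are bookkeeping remarks; the substance of your argument is sound.
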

\begin{proof}  
  The isomorphism $\varphi$ may be used to identify elements of $J$, which we write as words in $\{a,g\}$, with permutations in $G$. If $w\in J$ is a word in $\{a,g\}$, and $\sigma\in G$, we write $w\equiv\sigma$ or $\sigma\equiv w$ to signify that $\varphi(w)=\sigma$.

  Let $b,z$ be the words in $\{a,g\}$ defined in Theorem~\ref{thm:p+2}; these have bit-length $O(\log p)$ in $\{a,g\}$. Remark~\ref{rem:p+2-isomorphisms} states that $b\equiv(x\mapsto rx : x\in\F_p^*)$ and $z\equiv(p,p+1,p+2)$. Thus the 3-cycles $(i,p+1,p+2)\equiv z^{a^i}$ for $1\leq i\leq p$ have bit-length $O(\log p)$ in $\{\varphi(a),\varphi(g)\}$, hence so too do all permutations of bounded support.

  Consider a cycle $(i,i+1,\ldots,j)$ with $i<j$. First assume that $j\leq p-1$. Let $x\coloneqq za^{-1}z^{-1}az\equiv(1,p)(p+1,p+2)$. Then $ax\equiv(1,\ldots,p-1)(p+1,p+2)$. Thus $a^{-j}(ax)^j\equiv(p,1,\ldots,j)(p+1,p+2)^j$ and
  \begin{equation}
    \label{eq:p+2-k-l-cycle}
    a^{-j}(ax)^{j-i}a^i=a^{-j}(ax)^j[a^{-i}(ax)^i]^{-1}\equiv(i,i+1,\ldots,j)(p+1,p+2)^{j-i}.
  \end{equation}
  Hence, if $j-i$ is even and $j\leq p-1$, then $(i,i+1,\ldots,j)$ has bit-length $O(\log p)$ in $\{\varphi(a),\varphi(g)\}$.
  Next define words
  \begin{equation}
    \label{eq:p+2-even-cycles}
    c(i,j) \coloneqq
    \begin{cases}
      a^{-j}(ax)^{j-i}a^i & \text{for } 1\leq i \leq j\leq p-1 \text{ (cf. \eqref{eq:p+2-k-l-cycle})}, \\
      z^{(za)^{-2}}c(i,p-2) & \text{for } 1\leq i\leq p-2, \text{ and } j=p, \\
      z^{(za)^{-1}}c(i,p-1) & \text{for } 1\leq i\leq p-1, \text{ and } j=p+1, \\
      zc(i,p) & \text{for } 1\leq i\leq p-2, \text{ and } j=p+2.
    \end{cases}
  \end{equation}
  Then $c(i,j)\equiv(i,i+1,\ldots,j)(p+1,p+2)^{j-i}$ wherever this is defined. In particular every cycle $(i,i+1,\ldots,j)$ with $i<j$ and $j-i$ even has bit-length $O(\log p)$ in $\{\varphi(a),\varphi(g)\}$ (recall that $z\equiv(p,p+1,p+2)$).

  Now suppose $G=S_{p+2}$ and $p\equiv11\bmod{12}$. It suffices to show that some transposition, such as $(p+1,p+2)$, has bit-length $O(\log p)$ in $\{\varphi(a),\varphi(g)\}$. Define words
  \begin{align}
    \label{eq:p+2-transposition-explicit}
    \begin{split}
      b_2 &\coloneqq b^{(p-1)/2} \equiv (1,p-1)(2,p-2)\cdots\big(\frac{p-1}{2},\frac{p+1}{2}\big), \\
      c_\bullet &\coloneqq c\big(1,\frac{p-1}{2}\big)\; c\big(\frac{p+1}{2},p-1\big)^{-1} \equiv \big(1,2,\ldots,\frac{p-1}{2}\big)\big(p-1,p-2,\ldots,\frac{p+1}{2}\big), \\
      d &\coloneqq z^a (z^{a^{-1}})^{-1} z^a \equiv (1,p-1)(p+1,p+2), \\
      v &\coloneqq (dc_\bullet)^{(p-1)/2} \equiv (1,p-1)(2,p-2)\cdots\big(\frac{p-1}{2},\frac{p+1}{2}\big)(p+1,p+2), \\
      t &\coloneqq vb_2 \equiv (p+1,p+2).
    \end{split}
  \end{align}
  That $v\equiv(1,p-1)(2,p-2)\cdots(\frac{p-1}{2},\frac{p+1}{2})(p+1,p+2)$ follows from \eqref{eq:horner}. Specifically, note that $c_\bullet$ has order $(p-1)/2$ and $d$ has order 2 in $J$. Hence
  \[ v = (dc_\bullet)^{(p-1)/2} = (dc_\bullet)^{(p-1)/2}dc_\bullet^{(p-1)/2}d = dd^{c_\bullet^{-1}}d^{c_\bullet^{-2}}\cdots d^{c_\bullet^{-(p-1)/2}}d \]
  in $J$. Observe that $d^{c_\bullet^{-i}}\equiv(\frac{p+1}{2}-i,\frac{p-1}{2}+i)(p+1,p+2)$ for all $1\leq i\leq(p-1)/2$. Since $(p-1)/2$ is odd, $v$ corresponds to the permutation indicated in \eqref{eq:p+2-transposition-explicit}.

  Multiplication by $(p+1,p+2)$ changes the support of a permutation by at most 2, hence odd permutations of bounded support have bit-length $O(\log p)$ in $\{\varphi(a),\varphi(g)\}$. This includes all odd cycles $(i,i+1,\ldots,j)$ with $0<j-i\leq4$. For the cycles $(i,i+1,\ldots,j)$ with $j-i>4$ odd, recall that $(i,i+1,\ldots,j)(p+1,p+2)^{j-i}$ has bit-length $O(\log p)$. Post multiplication by $(p+1,p+2)$ yields the desired result.
\end{proof}

\begin{remark}
  The words constructed in Lemma~\ref{rem:p+2-cycle-bit-length} differ from those of \cite{GKKL1}, primarily due to the order of multiplication of permutations. However our definition of $v$ differs from that of \cite[Remark 3.16]{GKKL1} for another reason. The construction of $v$ in the latter corresponds to $(2,p-2)\cdots(\frac{p-1}{2},\frac{p+1}{2})$. From this a word is constructed, which is claimed to correspond to $(p+1,p+2)$, but actually corresponds to $(1,p-1)$. This error affects the correctness of the explicit presentation of the symmetric group given in \cite[Section 3.5]{GKKL1}.
\end{remark}

\subsection{Gluing presentations}

The following is a slightly special case of \cite[Corollary 3.28]{GKKL1}, and follows from more general statements  \cite[Lemma 3.23, Lemma 3.27]{GKKL1} about `gluing' presentations of two alternating groups or two symmetric groups.
\begin{theorem}
  \label{thm:glue}
  If $A_m$ has a presentation with $M$ relations and $6\leq k\leq m-2$, then $A_{2m-k}$ has a presentation with $M+4$ relations, and an additional generator. The same result holds for symmetric groups under the weaker hypothesis that $6\leq k\leq m-1$.

  Such presentations may be constructed as follows. We will describe the construction in terms of alternating groups only, but the same construction works when they are replaced by symmetric groups throughout.

  Let $J\coloneqq\langle X\mid R \rangle\cong A_m$. View $A_{2m-k}$ as $\Alt(\{-m+k+1,\ldots,m\})$, and let $\pi\colon J\hookrightarrow A_{2m-k}$ be an embedding with $\pi(J)=\Alt(\{1,\ldots,m\})$. Let $\bar{y}\in A_{2m-k}$ be a permutation sending $\{-m+k+1,\ldots,k\}$ to $\{1,\ldots,m\}$ that induces the identity on $\{1,\ldots,k\}$ (the conditions $k\leq m-2$ and $k\leq m-1$ respectively imply the existence of $\bar{y}$). Let $a,b,c,d,e$ be words in $X$ such that
  \begin{gather*}
    \pi(a)=(1,2,3), \quad \pi(b)\in\Alt(\{4,\ldots,k\}), \quad \pi(c)\in\Alt(\{1,\ldots,k\}), \\
    \pi(d)\in\Alt(\{4,\ldots,m\}), \quad \pi(e)^{\bar{y}}\in\Alt(\{-m+k+1,\ldots,3\}), \\
    \langle \pi(a), \pi(c) \rangle = \Alt(\{1,\ldots,k\}), \quad \langle \pi(b), \pi(d) \rangle = \Alt(\{4,\ldots,n\}), \\
    \langle \pi(a), \pi(e)^{\bar{y}} \rangle = \Alt(\{-m+k+1,\ldots,3\}).
  \end{gather*}
  The conditions on $m,k$ guarantee that such words exist. Define $\theta\colon F_{X\cup\{y\}}\to A_{2m-k}$ by $\theta|_X\coloneqq\pi|_X$ and $\theta(y)\coloneqq\bar{y}$. Since $A_{2m-k}=\langle \pi(X)\cup\pi(X)^{\bar{y}} \rangle$, there exists a word $w$ in $X\cup X^y$ such that $\theta(w)=\bar{y}$. If $w$ is such a word, then
  \[ J'\coloneqq\langle X, y \mid R,\ a=a^y,\ c=c^y,\ [d,e^y]=1,\ y=w \rangle\cong A_{2m-k}. \]
  The map $\theta$ factors through $J'$, inducing an isomorphism $J'\cong A_{2m-k}$. Observe $b$ is not directly used in the presentation, however we require its existence in the proof.
\end{theorem}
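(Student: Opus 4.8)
The plan is the usual ``$\theta$ factors through, surjectivity, then an order count'' argument, the work being the bookkeeping that brings the general gluing lemmas \cite[Lemma 3.23, Lemma 3.27]{GKKL1} to bear on the explicit words and point-sets described. I would do the alternating case; the symmetric case is identical with $\Sym$ replacing $\Alt$ throughout, and the only reason the hypothesis relaxes to $k\le m-1$ there is that $\bar y$ need no longer be an even permutation: when $k=m-1$ the constraints force $\bar y=(0,m)$ away from its fixed part, which is odd, so the alternating case needs $k\le m-2$ to leave room for an even choice.

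First I would check that $\theta$ respects all defining relations of $J'$, so that it induces a homomorphism $\theta'\colon J'\to A_{2m-k}$. The relations $R$ hold since $\theta|_X=\pi|_X$ and $\pi$ is a homomorphism on $\langle X\mid R\rangle\cong A_m$; $y=w$ holds by the choice of $w$; $a=a^y$ and $c=c^y$ hold because $\bar y$ fixes $\{1,\dots,k\}$ pointwise and so commutes with $\pi(a),\pi(c)\in\Alt(\{1,\dots,k\})$; and $[d,e^y]=1$ holds because $\pi(d)$ and $\pi(e)^{\bar y}$ have the disjoint supports $\{4,\dots,m\}$ and a subset of $\{-m+k+1,\dots,3\}$. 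Surjectivity is immediate, since the image contains $\pi(X)$ and $\bar y=\theta'(y)$ while $A_{2m-k}=\langle\pi(X)\cup\pi(X)^{\bar y}\rangle\le\langle\pi(X),\bar y\rangle$. So everything reduces to proving $|J'|\le|A_{2m-k}|$.

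Put $P:=\langle X\rangle\le J'$. Since $\langle X\mid R\rangle\cong A_m$ is finite and $\theta'$ maps $\langle X\rangle$ onto $\Alt(\{1,\dots,m\})\cong A_m$, Lemma~\ref{lem:2.1} gives $P\cong A_m$; then $\theta'|_P$ is a surjection of finite groups of equal order, hence an isomorphism, so I identify $P$ with $\Alt(\{1,\dots,m\})$ via $\pi$, the words $a,b,c,d,e$ now being the permutations $\pi(a),\dots,\pi(e)$. Because $w$ is a word in $X\cup X^y$, the relation $y=w$ forces $y\in\langle P,P^y\rangle$, hence $J'=\langle P,P^y\rangle$; and as $|P^y|=|P|$ the same argument identifies $P^y$ with $\Alt(\{-m+k+1,\dots,k\})$ (note $\{1,\dots,m\}^{\bar y}=\{-m+k+1,\dots,k\}$, since $\bar y$ carries $\{k+1,\dots,m\}$ onto $\{-m+k+1,\dots,0\}$). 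The relations $a=a^y$, $c=c^y$ put $a,c\in P\cap P^y$, so $\Alt(\{1,\dots,k\})=\langle a,c\rangle\le P\cap P^y$; applying the injective $\theta'|_P$ gives $P\cap P^y=\Alt(\{1,\dots,k\})$, sitting in the expected way inside both $P$ and $P^y$. Next I would upgrade $[d,e^y]=1$ to the elementwise relation $[\Alt(\{4,\dots,m\}),\Alt(\{-m+k+1,\dots,3\})]=1$: the commutators $[d,a]$ and $[b,a]$ already vanish in $\langle X\mid R\rangle$ (disjoint supports in $A_m$), and $[b,e^y]$ vanishes because $b\in P\cap P^y\le P^y$ and under $\theta'$ the elements $b,e^y$ of $P^y$ map to permutations with disjoint supports $\{4,\dots,k\}$ and $\subseteq\{-m+k+1,\dots,3\}$; with $[d,e^y]=1$ this shows $\langle b,d\rangle=\Alt(\{4,\dots,m\})$ commutes with $\langle a,e^y\rangle=\Alt(\{-m+k+1,\dots,3\})$ elementwise — here $b$ is needed precisely to guarantee $\langle b,d\rangle=\Alt(\{4,\dots,m\})$ rather than a smaller subgroup. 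Since $\{4,\dots,k\}$ is an overlap of size $k-3\ge3$ and $\{1,2,3\}$ of size $3$, we get $\langle\Alt(\{1,\dots,k\}),\Alt(\{4,\dots,m\})\rangle=P$ and $\langle\Alt(\{1,\dots,k\}),\Alt(\{-m+k+1,\dots,3\})\rangle=P^y$, so $J'$ is generated by the three alternating subgroups $\Alt(\{4,\dots,m\})$, $\Alt(\{1,\dots,k\})$, $\Alt(\{-m+k+1,\dots,3\})$, subject to: the relations internal to the honest copies $P\cong A_m$ and $P^y\cong A_m$; the identification of their common $\Alt(\{1,\dots,k\})$ forced by $a=a^y,c=c^y$; and the single commuting relation $[d,e^y]=1$.

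The final step is the real obstacle. As $J'=\langle P,P^y\rangle$ it is a quotient of the amalgamated product $P\ast_{P\cap P^y}P^y\cong A_m\ast_{A_k}A_m$, which is infinite; that imposing the one commutator relation $[d,e^y]=1$ (given that the two factors are honest copies of $A_m$ glued along $A_k=\Alt(\{1,\dots,k\})$) cuts this down exactly to $\Alt(\{-m+k+1,\dots,m\})$ is precisely the content of \cite[Lemma 3.23]{GKKL1} (respectively \cite[Lemma 3.27]{GKKL1}), proved there by an inductive ``one extra point at a time'' analysis that exploits the commuting relation. Granting it, $|J'|\le|A_{2m-k}|$, so the surjection $\theta'$ is an isomorphism; hence $\theta$ factors through $J'$ and induces the isomorphism $J'\cong A_{2m-k}$, while, as noted, $b$ enters only through its defining property $\langle\pi(b),\pi(d)\rangle=\Alt(\{4,\dots,m\})$.
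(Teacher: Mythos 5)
Your argument is correct up to and including the identification of the subgroup structure (the commuting relation $[\langle b,d\rangle,\langle a,e^y\rangle]=1$, the identification $P\cap P^y=\Alt(\{1,\ldots,k\})$, and the observation that $b$ is needed only to guarantee $\langle b,d\rangle=\Alt(\{4,\ldots,m\})$; your diagnosis of why $k\le m-2$ is needed in the alternating case also matches Remark~\ref{rem:glue}). Where you diverge is the decisive final step: you punt to the external gluing lemmas \cite[Lemma 3.23, Lemma 3.27]{GKKL1} to conclude that the commuting relation collapses the amalgamated product to $A_{2m-k}$. The paper instead gives a self-contained conclusion: it produces explicit Carmichael generators $x_i\in J'$ with $\varphi(x_i)=(1,2,i)$ for $i\ge3$ (resp.\ Moore generators $(i,i+1)$ in the symmetric case), shows via uniqueness that the copies coming from $P$ and $P^y$ agree on the overlap, and then verifies Carmichael's relations \eqref{eq:carmichaeloo} (resp.\ Moore's \eqref{eq:moo}) directly — the only nontrivial cross-relations $(x_ix_j)^2=1$ being deduced by conjugating by an element of $\langle b,d\rangle$ inside the commuting relation \eqref{eq:subgroups-commute}. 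This makes $J'$ a quotient of $A_{2m-k}$, finishing the order count without appealing to the GKKL lemmas. Both routes are sound, but the paper's is self-contained (a point it cares about given the errors it catalogues in \cite{GKKL1}), whereas yours silently relies on the correctness of the general gluing lemmas, which the paper cites as motivation but chooses not to use.
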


\begin{proof}
  The words $a,c,d,e,\bar{y}$ were chosen so that the defining relations are satisfied; hence $\theta$ factors through $J'$, inducing an epimorphism $\varphi\colon J'\to A_{2m-k}$. This maps $\langle X \rangle$ in $J'$ onto the subgroup $\Alt(\{1,\ldots,m\})$ of $A_{2m-k}$. By Lemma~\ref{lem:2.1}, $\varphi$ maps $\langle X \rangle$ isomorphically onto $\Alt(\{1,\ldots,m\})$, and hence also maps $\langle X^y \rangle$ isomorphically onto $\Alt(\{1,\ldots,m\})^{\tilde{y}}=\Alt(\{-m+k+1,\ldots,k\})$. Identify $\langle X \rangle$ and $\langle X^y \rangle$ with these subgroups of $A_{2m-k}$.

  Observe that $a,b,c\in\langle X \rangle$ in $J'$ and $\varphi(b)\in\langle \varphi(a), \varphi(c) \rangle$. Hence $b\in\langle a,c \rangle$. Since the relations $a=a^y$ and $c=c^y$ hold in $J'$, it follows that $b=b^y$. Since $d,a\in\langle X \rangle$ and their images under $\varphi$ have disjoint support, $[d,a]=1$. Similarly $[b,a]=1$ ($b,a\in\langle X \rangle$) and $[b,e^y]=1$ ($b=b^y,e^y\in\langle X^y \rangle$). Combine this with the relation $[d,e^y]=1$ to deduce
  \begin{equation}
    \label{eq:subgroups-commute}
    [\langle b,d \rangle,\langle a,e^y \rangle] = 1.
  \end{equation}
  Note that $\varphi$ maps $\langle b,d \rangle$ isomorphically onto $\Alt(\{4,\ldots,m\})$, and $\langle a,e^y \rangle$ isomorphically onto $\Alt(\{-m+k+1,\ldots,3\})$. Finally, the relation $y=w$ expressing $y$ as a word in $X\cup X^y$ implies that $J'=\langle X\cup X^y \rangle$.

  All the above applies when alternating groups are replaced by symmetric groups throughout. To complete the proof in both cases, we use known presentations of alternating and symmetric groups. In the alternating case, we use Carmichael's presentation~\eqref{eq:carmichael}
  \begin{align}
    \label{eq:carmichaeloo}
    \begin{split}
      A_n = \langle x_i,\ -m+1+k\leq i\leq m,\ i\neq1,2 &\mid x_i^3 = (x_ix_j)^2 = 1, \\
      &\phantom{\mid{}} \text{for all possible $i,j$ with $i\neq j$ } \rangle.
    \end{split}
  \end{align}

  There are unique $x_i\in\langle X \rangle$ with $\varphi(x_i)=(1,2,i)$ for $3\leq i\leq m$, and unique $y_j\in\langle X^y \rangle$ with $\varphi(y_j)=(1,2,j)$ for $-m+1+k\leq j\leq k$, $j\neq1,2$. The $x_i$ and the $y_j$ generate $\langle X \rangle$ and $\langle X^y \rangle$ respectively. By uniqueness, $x_i=y_i$ when $3\leq i\leq k$, since both lie in $\langle a,c \rangle=\langle a^y,c^y \rangle\leq\langle X \rangle\cap\langle X^y \rangle$. Hence it makes sense to define $x_i\coloneqq y_i$ for all $-m+1+k\leq i\leq k$ with $i\neq1,2$.

  The $x_i$ generate $J'=\langle X\cup X^y \rangle$. The relations in \eqref{eq:carmichaeloo} are clearly satisfied, with the possible exception of
  \[ (x_jx_i)^2 = (x_ix_j)^2 = 1 \text{ when } k < i \leq n \text{ and } -m+1+k\leq j\leq 0. \]
  Let $g\in\langle b,d\rangle \cong\Alt\{4,\ldots,m\}$ be such that $\varphi(g)$ sends $i$ to $k$. Then $g$ commutes with $x_j\in\langle a,e^y \rangle$ by \eqref{eq:subgroups-commute}, and $x_i^g=x_k$ since $\varphi(g)$ fixes 1 and 2. Thus
  \[ [(x_ix_j)^2]^g = (x_kx_j)^2 = 1. \]
  Hence the relations in \eqref{eq:carmichaeloo} hold. Thus $J'$ is an epimorphic image of $A_{2m-k}$, hence $\varphi\colon J'\to A_{2m-k}$ is an isomorphism.

  The symmetric group case is simpler; we use Moore's presentation \eqref{eq:moointro}
  \begin{align}
    \label{eq:moo}
    \begin{split}
      S_{2m-k} = \langle x_i, -m+1+k\leq i<m &\mid x_i^2 = (x_{i-1}x_i)^3 = (x_ix_j)^2 = 1, \\
      &\phantom{\mid{}} \text{for all possible $i,j$ with $|i-j|\geq2$}\rangle.
    \end{split}
  \end{align}
  As before, since $\langle a,c \rangle=\langle a^y,c^y \rangle\cong\Alt(\{1\,\ldots,k\})$, there exist $x_i\in J'$ with $\varphi(x_i)=(i,i+1)$ for $-m+k+1\leq i<m$, where $x_i\in\langle X \rangle$ for $1\leq i<m$, and $x_i\in\langle X^y \rangle$ for $-m+k+1\leq i<k$. Furthermore, the $x_i$ generate $J'=\langle X\cup X^y \rangle$. Every two distinct $x_i$ either both lie in $\langle X \rangle$, both lie in $\langle X^y \rangle$, or both commute by \eqref{eq:subgroups-commute} since one lies in $\langle b,d \rangle$ and the other lies in $\langle a,e^y \rangle$. From this it is clear that the relations in \eqref{eq:moo} are satisfied. Thus $J'$ is an epimorphic image of $S_{2m-k}$, hence $\varphi\colon J'\to S_{2m-k}$ is an isomorphism.
\end{proof}

\begin{remark}
  \label{rem:glue}
  Theorem~\ref{thm:glue} is a special case of \cite[Corollary 3.28]{GKKL1}. The proof of the latter has a error. There the hypothesis $k\leq m-2$ is weakened to $k\leq m-1$ in the alternating group case. In the proof $y$ must also correspond to a permutation sending $\{-m+k+1,\ldots,k\}$ to $\{1,\ldots,m\}$ which fixes $\{1,\ldots,k\}$ pointwise. However, if $k=m-1$, then the only such permutation is the transposition $(0,m)$, which does not lie in the alternating group.
\end{remark}

The following is a stronger version of Betrand's postulate. It is used in Theorem~\ref{thm:alt-sym-n} to show that the presentations in Theorem~\ref{thm:p+2} can be glued together (using Theorem~\ref{thm:glue}) to form presentations of alternating and symmetric groups of sufficiently large degree.

\begin{lemma}
  \label{lem:Mor}
  Let $n$ be an integer such that either $14\leq n\leq 20$, $26\leq n\leq 44$ or $n\geq50$. Then there is a prime $p\equiv11\bmod{12}$ such that $(n+2)/2\leq p\leq n-3$. If $n\notin\{14,26,50\}$, then we can ensure that $p$ lies in the range $(n+2)/2\leq p\leq n-4$.
\end{lemma}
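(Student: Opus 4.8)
The plan is to fix an explicit threshold $N_0$, verify the statement directly for the finitely many admissible $n$ with $n<N_0$ by exhibiting a suitable prime, and then deduce it for $n\ge N_0$ from an effective form of the prime number theorem for the progression $11\bmod{12}$.

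\smallskip
\emph{The finite range.}
I would first record that a prime $p\equiv 11\bmod{12}$ satisfies $(n+2)/2\le p\le n-4$ exactly when $p+4\le n\le 2p-2$, and satisfies the relaxed bound $(n+2)/2\le p\le n-3$ exactly when $p+3\le n\le 2p-2$. So $p=11$ handles $15\le n\le20$ (and $n=14$ via the relaxed bound, where $11=n-3$); $p=23$ handles $27\le n\le44$ (and $n=26$, where $23=n-3$); $p=47$ handles $51\le n\le92$ (and $n=50$, where $47=n-3$); and for larger $n$ one proceeds through the list $59,71,83,107,131,167,179,\ldots$ of primes $\equiv 11\bmod{12}$. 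For $p\ge47$ the gaps in this list are small enough that the intervals $[p+4,2p-2]$ overlap, so together with $p=11$ and $p=23$ they cover every $n<N_0$ permitted by the hypothesis; the excluded ranges $21\le n\le25$ and $45\le n\le49$ are exactly the gaps between $11$ and $23$ and between $23$ and $47$. This is a finite verification, which I would carry out in \textsc{Magma}, and it also pins down $n\in\{14,26,50\}$ as precisely the degrees for which the endpoint must be relaxed to $n-3$.

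\smallskip
\emph{The range $n\ge N_0$.}
Here I would set $\theta(x;12,11)=\sum_{p\le x,\ p\equiv 11\bmod{12}}\log p$ and invoke explicit bounds for primes in arithmetic progressions (for instance those of Ramar\'{e}--Rumely, or of Bennett--Martin--O'Bryant--Rechnitzer; note $\varphi(12)=4$): these provide a constant $\varepsilon<\tfrac13$ and a bound $x_0$ with $(1-\varepsilon)\tfrac{x}{4}<\theta(x;12,11)<(1+\varepsilon)\tfrac{x}{4}$ for all $x\ge x_0$. Taking $N_0\ge x_0$ sufficiently large, for $n\ge N_0$ one obtains
\[
  \theta(n-4;12,11)-\theta\!\left(\tfrac{n+2}{2};12,11\right)\;>\;\frac{(1-\varepsilon)(n-4)-(1+\varepsilon)\tfrac{n+2}{2}}{4}\;>\;0,
\]
the last inequality holding for all large $n$ because the coefficient of $n$ on the right is $\tfrac{1-3\varepsilon}{8}>0$. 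A positive difference of $\theta$-values yields a prime $p\equiv 11\bmod{12}$ with $(n+2)/2<p\le n-4$, which in particular lies in $[(n+2)/2,\,n-4]$.

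\smallskip
\emph{Main obstacle.} The real difficulty is purely quantitative: one must extract a concrete admissible pair $(\varepsilon,x_0)$ from the literature and then make certain the finite verification in the first step genuinely reaches $N_0\ge x_0$. Choosing a reference tailored to the modulus $12$ keeps $x_0$ — and hence the length of that routine, machine-checked computation — small; once this is settled the remainder is bookkeeping about the list of primes $\equiv 11\bmod{12}$ and the three exceptional degrees.
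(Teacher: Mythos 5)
Your proposal follows essentially the same two-part structure as the paper's proof: a finite verification for small $n$ using explicit primes $p\equiv11\bmod{12}$ (based on the identical observation that $(n+2)/2\le p\le n-4$ is equivalent to $p+4\le n\le 2p-2$), together with an effective prime-distribution result for large $n$. Where you differ is in the choice of reference for the infinite tail. You propose to derive the needed Bertrand-type statement from explicit bounds on $\theta(x;12,11)$ (Ramar\'e--Rumely or Bennett--Martin--O'Bryant--Rechnitzer), and you correctly flag that the obstacle is extracting a concrete admissible pair $(\varepsilon,x_0)$ and then making certain the finite check reaches $x_0$. The paper instead cites a result of Moree \cite{Mor} already tailored to modulus $12$: for every $x\ge 25$ and every $a$ coprime to $12$, there is a prime $p\equiv a\bmod{12}$ with $x<p<1.94x$. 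This immediately yields the threshold $n\ge 198$ (via the identity $n-4 = 1.94\cdot\tfrac{n+2}{2}+0.03(n-198)$), after which the finite check needs only the primes $11,23,47,59,107$ plus the three exceptional degrees $14,26,50$. Your route is sound in principle, but the threshold obtainable from general $\theta$-bounds would likely be larger, extending the finite computation; Moree's bound is the more economical input and avoids the quantitative bookkeeping you identify as the main difficulty.
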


\begin{proof}
  By \cite{Mor}, for every $x\geq25$ and every integer $a$ with $\gcd(a,12)=1$, there exists a prime $p\equiv a\bmod{12}$ in the interval $(x,1.94x)$. In particular there exists a prime $p\equiv11\bmod{12}$ in all such intervals. If $n$ is an integer, then
  \[ n-4=1.94\big(\frac{n+2}{2}\big)+0.03(n-198). \]
  Thus if $n\geq198$, then $1.94(n+2)/2\leq n-4$ and $(n+2)/2\geq100>25$. Hence there is a prime $p\equiv11\bmod{12}$ such that $(n+2)/2\leq p\leq n-4$. For the remaining integers $n$, note that
  \[ \frac{n+2}{2}\leq p\leq n-4 \iff p+4\leq n\leq 2p-2. \]
  Thus the primes $p\in\{47,59,107\}$ cover the ranges $51\leq n\leq 96$, $63\leq n\leq 114$ and $111\leq n\leq 210$, hence for all $n\geq51$ there exists such a prime. The ranges $15\leq n\leq 20$ and $27\leq n\leq 44$ are covered by $p=11$ and $p=23$, respectively. Finally, if $n\in\{14,26,50\}$, then $p\coloneqq n-3\equiv11\bmod{12}$ is prime, and clearly $(n+2)/2\leq p\leq n-3$.
\end{proof}

\begin{theorem}
  \label{thm:alt-sym-n}
  Let $n$ be an integer such that either $13\leq n\leq20$, $25\leq n\leq 44$ or $n\geq49$. Both $A_n$ and $S_n$ have 3-generator 7-relator presentations with bit-length $O(\log n)$.
\end{theorem}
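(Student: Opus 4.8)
The plan is to build all the required presentations from three ingredients already assembled above: the ``special degree'' presentations of Theorem~\ref{thm:p+2} (two-generator four-relator presentations of $A_{p+2}$ when $p\equiv11\bmod{12}$ and of $S_{p+2}$ when $p\equiv2\bmod3$) and of Example~\ref{Examples3.18(1)} (a three-generator seven-relator presentation of $A_{p+3}$ for every prime $p>3$), the gluing construction of Theorem~\ref{thm:glue}, and the strengthened Bertrand postulate of Lemma~\ref{lem:Mor}; bit-lengths are to be kept under control with Lemma~\ref{rem:p+2-cycle-bit-length}. I would organize the proof by splitting the degrees $n$ in the statement according to whether Lemma~\ref{lem:Mor} applies.

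First I would clear the degrees lying outside the range of Lemma~\ref{lem:Mor}, namely $n\in\{13,25,49\}$. For these, $n=p+2$ with $p=n-2\in\{11,23,47\}$ prime and $p\equiv11\bmod{12}$ (hence also $p\equiv2\bmod3$), so Theorem~\ref{thm:p+2} already supplies two-generator four-relator presentations of both $A_{p+2}$ and $S_{p+2}$ of bit-length $O(\log p)=O(\log n)$, which a fortiori yield the desired three-generator seven-relator presentations. Next I would dispose of the three alternating degrees $n\in\{14,26,50\}$, which are exactly the ones for which the gluing step below is unavailable: each equals $p+3$ with $p=n-3\in\{11,23,47\}$ prime and $p>3$, so Example~\ref{Examples3.18(1)} directly provides a three-generator seven-relator presentation of $A_{p+3}$ of bit-length $O(\log p)=O(\log n)$. (No special handling is needed for $S_n$ with $n\in\{14,26,50\}$: the gluing argument below still applies to them.)

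For every remaining $n$, Lemma~\ref{lem:Mor} produces a prime $p\equiv11\bmod{12}$ with $(n+2)/2\le p\le n-3$, and with the sharper bound $p\le n-4$ unless $n\in\{14,26,50\}$. I would set $m\coloneqq p+2$ and $k\coloneqq 2m-n=2p+4-n$, so $n=2m-k$; then $p\ge(n+2)/2$ gives $k\ge6$, the bound $p\le n-3$ gives $k\le p+1=m-1$, and the bound $p\le n-4$ gives $k\le p=m-2$. Since $p\equiv11\bmod{12}$ forces $p\equiv2\bmod3$, Theorem~\ref{thm:p+2} provides four-relator presentations of $A_m=A_{p+2}$ and of $S_m=S_{p+2}$, and feeding these into Theorem~\ref{thm:glue} — with $k\le m-2$ in the alternating case (legitimate, because then $n\notin\{14,26,50\}$) and with the weaker $k\le m-1$ in the symmetric case (always valid) — yields a three-generator $4+4=8$-relator presentation of $A_{2m-k}=A_n$, resp.\ $S_n$. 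The bit-length remains $O(\log p)=O(\log n)$ because the auxiliary words $a,b,c,d,e$ demanded by Theorem~\ref{thm:glue} can be chosen as bounded products of cycles $(i,i+1,\dots,j)$ and permutations of bounded support, which have bit-length $O(\log p)$ in $\{\varphi(a),\varphi(g)\}$ by Lemma~\ref{rem:p+2-cycle-bit-length} (in the alternating case one restricts to cycles with $j-i$ even and repairs parity with bounded-support elements; in the symmetric case, since $p\equiv11\bmod{12}$, every such cycle is cheap), while the connecting word $w$ with $\theta(w)=\bar y$ can be built from boundedly many such cheap cycles since $\bar y$ swaps two blocks of size $m-k$ and fixes $\{1,\dots,k\}$ pointwise.

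The one point that is genuinely delicate, and which I expect to be the main obstacle, is the reduction from eight relations to seven. The idea, foreshadowed in the outline of our strategy, is to combine the relation $h=1$ of the base presentation with the gluing relation $y=w$: instead of gluing the four-relator presentation of $A_m$, one glues from the three-relator presentation $\hat J\cong\hat G$ of Theorem~\ref{thm:p+2}, and one chooses the gluing data so that a single added relation simultaneously expresses $y$ as a word in $X\cup X^y$ and kills the direct factor $T=1\times(T\cap A_p)$ — the compatibility required here is essentially the commuting square of Remark~\ref{rem:compatible} together with the fact, recorded in Remark~\ref{rem:p+2-isomorphisms}, that $3$ does not divide the order of $\hat\varphi(h)$. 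The hard part will be verifying that this combined relation, added to the three relations of $\hat J$ and the three relations $a=a^y$, $c=c^y$, $[d,e^y]=1$, really presents $A_n$ (resp.\ $S_n$) rather than a proper quotient or a larger overgroup, while keeping bit-length $O(\log n)$; the rest is bookkeeping, apart from the parity care needed in the alternating case to remain within the cheap cycles of Lemma~\ref{rem:p+2-cycle-bit-length}. I would close by recording the elementary observation that $\{13,25,49\}$, the alternating degrees $\{14,26,50\}$, and the ranges $14\le n\le20$, $26\le n\le44$, $n\ge50$ of Lemma~\ref{lem:Mor} together account for exactly $13\le n\le20$, $25\le n\le44$, $n\ge49$.
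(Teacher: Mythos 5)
Your classification of the degrees, the use of Theorem~\ref{thm:p+2} for $n\in\{13,25,49\}$ and of Example~\ref{Examples3.18(1)} for $A_n$ with $n\in\{14,26,50\}$, and the gluing set-up with $m=p+2$, $k=2p+4-n$ all match the paper's proof. So does your bit-length argument via Lemma~\ref{rem:p+2-cycle-bit-length}. The genuine gap is the reduction from eight relations to seven, which you flag as ``the one point that is genuinely delicate'' and which you leave unresolved. You propose to merge the relator $h=1$ with the gluing relator $y=w$; the paper instead merges it with $\tilde a=\tilde a^y$. The distinction matters: with $y=w$ there is no visible mechanism to recover $h=1$ from a modified relator such as $y=wh$, since nothing forces $h$ into a subgroup of known exponent. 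The paper's trick is to replace $\tilde a=\tilde a^y$ by $\tilde a=(\tilde a h)^y$ and choose $\tilde a$ so that $\hat\varphi(\tilde a)=((1,2,3),1)$. Then (i) $\tilde a$ has order dividing $3$ already in $\hat J$, hence so does its conjugate $(\tilde a h)^y=\tilde a$ in $\bar J$, hence $\tilde a h$ has order dividing $3$; (ii) $\tilde a$ and $h$ commute in $\hat J$ because $\hat\varphi(h)\in 1\times T$, so $h=\tilde a^{-1}(\tilde a h)$ also has order dividing $3$ in $\bar J$; (iii) by Remark~\ref{rem:p+2-isomorphisms} the order of $h$ in $\hat J$ is coprime to $3$, so the same is true of its image in $\bar J$ under the canonical map $\hat J\to\langle X\rangle\le\bar J$, forcing $h=1$. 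You do cite the two relevant facts (Remark~\ref{rem:compatible} and the coprimality of $|h|$ to $3$), but you do not combine them into this order argument, and you have the wrong target relation; as written, your step (6) would not close. Everything else in your proposal is the paper's argument.
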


\begin{proof}
  The groups $A_{13},A_{25},A_{49}$ and $S_{13},S_{25},S_{49}$ are handled by Theorem~\ref{thm:p+2} (cf. \eqref{eq:p+2}), and $A_{14},A_{26},A_{50}$ are handled by Example~\ref{Examples3.18(1)} (cf. \eqref{eq:alt-p+3}).

  Let $\bar{G}$ be $A_n$ or $S_n$ for one of the remaining cases. By Lemma~\ref{lem:Mor}, there is a prime $p\equiv11\bmod{12}$ such that $(n+2)/2\leq p\leq n-3$, and if $\bar{G}=A_n$ we can ensure that $p\leq n-4$. Let $G\coloneqq A_{p+2}$ if $\bar{G}=A_n$, and $G\coloneqq S_{p+2}$ if $\bar{G}=S_n$.

  Theorem~\ref{thm:p+2} defines groups $J\coloneqq\langle X\mid R', h=1 \rangle$ and $\hat{J}\coloneqq\langle X\mid R' \rangle$ with $|X|=2$ and $|R'|=3$. A subgroup $\hat{G}$ of $G\times T$, which projects onto $G$ in the first coordinate, is also defined. Remark~\ref{rem:p+2-isomorphisms} defines isomorphisms $\varphi\colon J\to G$ and $\hat{\varphi}\colon\hat{J}\to\hat{G}$. By Remark~\ref{rem:compatible} these isomorphisms are compatible with the maps $\hat{J}\twoheadrightarrow J$ and $\hat{G}\twoheadrightarrow G$ given by adding the relation $h=1$ and projection onto the first coordinate, respectively.

  Let $m\coloneqq p+2$ and $k\coloneqq 2p+4-n$, so $n=2m-k$. The conditions on $p$ imply that $6\leq k\leq m-2$ if $\bar{G}=A_n$, and $6\leq k\leq m-1$ if $\bar{G}=S_n$. By using the presentation of $G$ in Theorem~\ref{thm:p+2}, Theorem~\ref{thm:glue} furnishes a 3-generator 8-relator presentation of $\bar{G}$.

  More precisely, view $\bar{G}$ as acting on $\{-m+k+1,\ldots,m\}=\{-p+k-1,\ldots,p+2\}$, and embed $J\hookrightarrow\bar{G}$ via $\varphi\colon J\to G\leq\bar{G}$. To avoid conflicts in notation with Theorem~\ref{thm:p+2}, we write $\tilde{a}$ instead of $a$ in Theorem~\ref{thm:glue}. If words $\tilde{a},c,d,e$ in $X$, a word $w$ in $X\cup X^y$, and $\bar{y}\in\bar{G}$ all satisfy the conditions of Theorem~\ref{thm:glue}, then
  \[ \langle X, y \mid R',\ h=1,\ \tilde{a}=\tilde{a}^y,\ c=c^y,\ [d,e^y]=1,\ y=w \rangle \cong \bar{G}. \]
  An isomorphism is defined by $x\mapsto\varphi(x)$ and $y\mapsto\bar{y}$. Note that $\varphi(\tilde{a})=(1,2,3)$ is a 3-cycle. Assume further that $\tilde{a}$ is chosen so that $\hat{\varphi}(\tilde{a})=((1,2,3),1)$. Recall that $\hat{\varphi}(h)\in1\times T$ (cf. Remark~\ref{rem:p+2-isomorphisms}), hence $\tilde{a}$ and $h$ commute in $\hat{J}$. We claim that
  \[ \bar{J}\coloneqq \langle X, y \mid R',\ \tilde{a}=(\tilde{a}h)^y,\ c=c^y,\ [d,e^y]=1,\ y=w \rangle \cong \bar{G}. \]
  An isomorphism $\bar{J}\cong\bar{G}$ is defined by $x\mapsto\varphi(x)$ and $y\mapsto\bar{y}$. It suffices to show that $h=1$ in $\bar{J}$. Define $\pi\colon\hat{J}=\langle X\mid R' \rangle\twoheadrightarrow\langle X \rangle\leq\bar{J}$ by $x\mapsto x$ for each $x\in X$. Since $\tilde{a}\in\hat{J}$ has order dividing 3, so too does $\pi(\tilde{a})=\tilde{a}=(\tilde{a}h)^y\in\bar{J}$. Hence $\tilde{a}h\in\bar{J}$ has order dividing 3. Since $\tilde{a}$ and $h$ commute in $\hat{J}$, their images under $\pi$ commute in $\bar{J}$. Thus $h\in\bar{J}$ also has order dividing 3. However by Remark~\ref{rem:p+2-isomorphisms}, the order of $h\in\hat{J}$ is not divisible by 3, hence neither is the order of $\pi(h)=h\in\bar{J}$. It follows that $h=1$ in $\bar{J}$, as required.
  
  We have produced a 3-generator 7-relator presentation of $\bar{G}$. To show that such a presentation exists with bit-length $O(\log n)$, it suffices to show that $\tilde{a},c,d,e,w$ can be chosen to be words of bit-length $O(\log n)$ in the generators. We sketch how this can be done below, however more detail will be given in the next section.

  The words $c,d,e$ can be chosen to represent permutations in $G\cong J$ which can be written as the product of at most 2 cycles of the form $(i,i+1,\ldots,j)$ and a permutation of bounded support. By Lemma~\ref{rem:p+2-cycle-bit-length} we can ensure $c,d,e$ have bit-length $O(\log n)$ in $X$.

  Define $\tilde{a}\coloneqq (z^{a^3})^{z^{a^2}z^a}$, where $a$ refers to the element of $X$ as specified in Theorem~\ref{thm:p+2} with $\varphi(a)=(1,2,\ldots,p)$, and $z$ is the word in $X$ defined in Theorem~\ref{thm:p+2} with $\varphi(z)=(p,p+1,p+2)$. Then $\varphi(\tilde{a})=(1,2,3)$ and $\hat{\varphi}(z)\in G\times1$, hence $\hat{\varphi}(\tilde{a})=((1,2,3),1)$ as required.

  It remains to demonstrate the existence of $\bar{y}\in\bar{G}$ and a word $w$ in $X\cup X^y$ of bit-length $O(\log n)$, satisfying the conditions of Theorem~\ref{thm:glue}. Let
  \begin{equation}
    \label{eq:y-bar}
    \bar{y} \coloneqq
    \begin{cases}
      (-p+k-1,p+2)(-p+k,p+1)\cdots(-1,k+2)(0,k+1) &\text{if $n$ is odd or $\bar{G}=S_n$}, \\
      (-p+k-1,p+2,-p+k,p+1)(-p+k+1,p)\cdots(0,k+1) & \text{if $n$ is even and $\bar{G}=A_n$.}
    \end{cases}
  \end{equation}
  
  Then $\bar{y}$ satisfies the required conditions of Theorem~\ref{thm:glue}. We defer the details of how $w$ may be defined to the explicit presentations given in Theorems~\ref{thm:sym-explicit} and \ref{thm:alt-explicit}.
\end{proof}

\begin{remark}
  \label{rem:Theorem 3.40}
  In \cite[Theorem 3.40]{GKKL1} the existence of such presentations of $A_n$ and $S_n$ is asserted for all $n\geq5$. \cite{GKKL1} discusses how to deal with the remaining values of $n$ not addressed by Theorem~\ref{thm:alt-sym-n} on a case-by-case basis.
\end{remark}

\begin{remark}
  \label{rem:Theorem 3.40-wrong}
  Theorem~\ref{thm:alt-sym-n} and its proof are based on \cite[Theorem 3.40]{GKKL1}. The proof of the latter is flawed because it relies on \cite[Corollary 3.28]{GKKL1}. See Remark~\ref{rem:glue} for a discussion of a flaw in the proof of this corollary. As a result of this flaw, some alternating groups are not properly addressed by \cite[Theorem 3.40]{GKKL1}.

  Specifically, let $n\geq5$ be an integer such that there is a unique prime $p\equiv11\bmod{12}$ with $(n+2)/2\leq p\leq n-3$, namely $p=n-3$. Then \cite[Theorem 3.40]{GKKL1} fails to prove the existence of a 3-generator 7-relator presentation of $A_n$ with bit-length $O(\log n)$.

  Such an integer $n$ is of the form $n=p+3$ for a prime $p\equiv11\bmod{12}$. In \cite[Examples 3.18 (1)]{GKKL1} 3-generator 7-relator presentations of $A_{p+3}$ with bit-length $O(\log p)$ are defined for primes $p>3$. However, their example has several errors. These errors are discussed in Remark~\ref{rem:p+3-errors}, and corrected in Example~\ref{Examples3.18(1)}.
\end{remark}

The following is based on \cite[Remark 3.37]{GKKL1}. It ensures that various constructions in \cite{GKKL1} produce presentations of bit-length $O(\log n)$.

\begin{lemma}
  \label{lem:n-cycle-bit-length}
  Let $\bar{J}=\langle X,y \mid R \rangle$ and $\bar{G}$ be defined as in the proof of Theorem~\ref{thm:alt-sym-n}, and $\bar{\varphi}\colon\bar{J}\to\bar{G}$ be the indicated isomorphism. Permutations of bounded support in $\bar{G}$ and cycles $(i,i+1,\ldots,j)\in\bar{G}$ have bit-length $O(\log n)$ in $\bar{\varphi}(X\cup\{y\})$.
\end{lemma}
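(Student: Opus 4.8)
The plan is to exploit the two–copy structure produced by the gluing in Theorem~\ref{thm:alt-sym-n}. With $m\coloneqq p+2$ and $k\coloneqq 2p+4-n$, the group $\bar G$ acts on $I\coloneqq\{-m+k+1,\dots,m\}$; the subgroup $\langle\bar\varphi(X)\rangle$ is (the relevant one of) $\Alt$ or $\Sym$ on $M_1\coloneqq\{1,\dots,m\}$, and $\langle\bar\varphi(X)^{\bar y}\rangle=\langle\bar\varphi(X)\rangle^{\bar y}$ is the same on $M_2\coloneqq\bar y(M_1)=\{-m+k+1,\dots,k\}$, where $M_1\cap M_2=\{1,\dots,k\}$, $M_1\cup M_2=I$, and $\bar y$ fixes $\{1,\dots,k\}$ pointwise while carrying $\{-m+k+1,\dots,0\}$ onto $\{k+1,\dots,m\}$ (for $\bar G=S_n$, or $\bar G=A_n$ with $n$ odd, by $x\mapsto k+1-x$; for $\bar G=A_n$ with $n$ even, $\bar y=(m-1,m)\iota$, where $\iota$ is that order–reversing involution). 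By Lemma~\ref{rem:p+2-cycle-bit-length}, applied to the copy $\langle\bar\varphi(X)\rangle\cong G$ and using $p\le n-3$ so that $\log p=O(\log n)$, every permutation of bounded support in $\bar G$ supported in $M_1$, and every consecutive cycle $(i,\dots,j)\in\bar G$ with $1\le i<j\le m$, already has a word of bit-length $O(\log n)$ in $\bar\varphi(X)$; conjugating such a word by $y^{\pm1}$ costs $O(1)$ further symbols and yields the corresponding permutation supported in $M_2$. (Since $p\equiv11\bmod{12}$ throughout Theorem~\ref{thm:alt-sym-n}, the odd permutations needed when $\bar G=S_n$ are covered as well.) It therefore suffices to express each target as a product of boundedly many factors, each supported entirely in $M_1$ or in $M_2$ and of one of the two types handled by Lemma~\ref{rem:p+2-cycle-bit-length}.

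For a permutation $\sigma$ of bounded support, first write $\sigma$ as a product of $O(1)$ $3$–cycles (together with one transposition if $\bar G=S_n$ and $\sigma$ is odd). Using the overlap point $1$ and the identities $(a,b,c)=(1,a,b)(1,c,a)$, $(a,b)=(1,a)(1,b)(1,a)$ and $(1,x,z)=(1,x,d)(1,d,z)$, rewrite each such $3$–cycle or transposition as a bounded product of elements $(1,x,y)$ or $(1,x)$ of bounded support, each supported in $M_1$ or in $M_2$: a transposition $(1,x)$ already has this property since $1$ lies in the overlap, and a $3$–cycle $(1,x,y)$ straddling the two halves (say $x\le 0<k<y$) is split through an auxiliary point $d\in\{1,\dots,k\}$. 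Each $M_1$–piece is handled by Lemma~\ref{rem:p+2-cycle-bit-length}, each $M_2$–piece by conjugating its $\bar y$–image by $y^{-1}$; for $\bar G=A_n$ no parity obstruction arises because $3$–cycles are even.

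For a consecutive cycle $(i,\dots,j)\in\bar G$, the cases $1\le i$ and $j\le 0$ follow from the previous paragraph: in the first the support lies in $M_1$, and in the second the $\bar y$–conjugate is the inverse of a consecutive cycle supported in $\{k+1,\dots,m\}\subseteq M_1$, up to a bounded-support correction when $\bar G=A_n$ with $n$ even (this correction is itself expressible, being of bounded support). The remaining, and principal, case is a cycle straddling the gluing point, $i\le 0<j$. Here one uses an explicit splitting at $0$ of the shape $(i,\dots,j)=C_-\,C_+\,B$, where $C_-$ is a consecutive cycle supported in $\{-m+k+1,\dots,0\}$, $C_+$ is a consecutive cycle supported in $\{1,\dots,j\}$ or $\{2,\dots,j\}$, and $B$ is a $3$–cycle (or transposition) of bounded support, namely $(i,0,1)$ or $(i,1,2)$; when $\bar G=A_n$ one distinguishes the subcases $i,j$ both odd and both even and checks that $C_-$, $C_+$ and $B$ can all be chosen to be even permutations. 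Since every factor has already been handled, $(i,\dots,j)$ has bit-length $O(\log n)$, and the lemma follows.

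The main obstacle is exactly this last case: $\bar y$ respects the copy structure but reverses orientation on $\{-m+k+1,\dots,0\}$, so it does not carry a consecutive cycle across the gluing point to another consecutive cycle, and one must instead produce the $C_-C_+B$ decomposition by hand and, in the alternating case, arrange all three factors to be even. The parity bookkeeping, though routine, is where the care is needed.
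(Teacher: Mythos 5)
Your proposal follows essentially the same route as the paper's proof: reduce via Lemma~\ref{rem:p+2-cycle-bit-length} to the two copies $M_1=\{1,\ldots,m\}$ and $M_2=\{-m+k+1,\ldots,k\}$, conjugate by $y$ to pass between them, and split a straddling consecutive cycle into two consecutive pieces (one per copy) plus a single $3$-cycle (or transposition), with parity bookkeeping in the alternating case. Two small differences are worth noting. First, you conjugate permutations in $M_2$ back into $M_1$, whereas the paper conjugates in the other direction and therefore has to treat $i\in\{-p+k-1,-p+k\}$ as a separate edge case; your direction is actually cleaner here, since $\bar y^{-1}$ restricted to $\{k+1,\ldots,m\}$ is always $x\mapsto k+1-x$ even when $\bar G=A_n$ and $n$ is even, so the ``bounded-support correction'' you hedge against never arises in that step. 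Second, your straddling-cycle decomposition $C_-C_+B$ puts the bounded piece at the end, $B=(i,0,1)$ or $(i,1,2)$ (or a transposition for $S_n$), while the paper's decomposition $(1,\ldots,j)(0,1)(i,\ldots,0)$, etc., keeps the bounded piece inside $\{-1,0,1,2\}$; both work, and the parity check you defer as ``routine'' does indeed go through with your choices $C_-=(i,\ldots,0)$, $C_+=(2,\ldots,j)$, $B=(i,1,2)$ for $j$ even and $C_-=(i,\ldots,-1)$, $C_+=(1,\ldots,j)$, $B=(i,0,1)$ for $j$ odd.
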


\begin{proof}
  Call a permutation in $\bar{G}$ \emph{expressible} if it has bit-length $O(\log n)$ in $\bar{\varphi}(X\cup\{y\})$. By Lemma~\ref{rem:p+2-cycle-bit-length}, permutations in $\bar{G}$ of bounded support contained in $\{1,\ldots,p+2\}$ and cycles $(i,i+1,\ldots,j)\in\bar{G}$ with $1\leq i\leq j\leq p+2$ are expressible. Conjugating by $\bar{\varphi}(y)$ shows that all permutations of bounded support are expressible.

   Recall the explicit choice of $\bar{\varphi}(y)=\bar{y}$ made in \eqref{eq:y-bar}. If $(i,i+1,\ldots,j)\in\bar{G}$ and $k+1\leq i\leq j\leq p$, then $(i,i+1,\ldots,j)^{\bar{y}}=(k+1-j,\ldots,k-i,k+1-i)$. Hence, cycles $(i,i+1,\ldots,j)\in\bar{G}$ with $-p+k+1\leq i\leq j\leq0$ are expressible.

   Let $(i,i+1,\ldots,j)\in\bar{G}$ with $-p+k+1\leq i\leq 0$ and $1\leq j\leq p+2$. We express $(i,i+1,\ldots,j)$ as the product of three expressible permutations. If $\bar{G}=S_n$ then $(i,i+1,\ldots,j)=(1,2,\ldots,j)(0,1)(i,i+1,\ldots,0)$. Suppose that $\bar{G}=A_n$. Note that $i,j$ have the same parity. If $j$ is even, then $(i,i+1,\ldots,j)=(2,3,\ldots,j)(0,1,2)(i,i+1,\ldots,0)$. If $j$ is odd, then $(i,i+1,\ldots,j)=(1,2,\ldots,j)(-1,0,1)(i,i+1,\ldots,-1)$. Hence in all cases $(i,i+1,\ldots,j)$ is expressible.

  Let $(i,i+1,\ldots,j)\in\bar{G}$ with $i\in\{-p+k-1,-p+k\}$. If $j-i<2$ then $(i,i+1,\ldots,j)$ has bounded support, and hence is expressible. Otherwise, $(i,i+1,\ldots,j)=(i+2,\ldots,j-1,j)(i,i+1,i+2)$ is the product of two expressible permutations, and hence is also expressible.
\end{proof}

\begin{remark}
  Lemma~\ref{lem:n-cycle-bit-length} addresses all but finitely many alternating and symmetric groups. Since the statement concerns asymptotic bit-length, by Remark~\ref{rem:Theorem 3.40} it generalises to all alternating and symmetric groups of degree at least 5. Namely, for all $n\geq5$, if $\bar{G}$ is $A_n$ or $S_n$, then $\bar{J}\coloneqq\langle X\mid R \rangle\cong\bar{G}$ for some $|X|=3$, $|R|=7$, where $R$ has bit-length $O(\log n)$ in $X$. Furthermore, there exists an isomorphism $\bar{\varphi}\colon\bar{J}\to\bar{G}$ such that permutations of bounded support in $\bar{G}$ and cycles $(i,i+1,\ldots,j)\in\bar{G}$ have bit-length $O(\log n)$ in $\bar{\varphi}(X)$. This is the content of \cite[Remark 3.37]{GKKL1}.

  The implicit justification of \cite[Remark 3.37]{GKKL1} is flawed due to the errors in \cite[Theorem 3.40]{GKKL1}. The proof of the latter does not address some alternating groups (cf. Remark~\ref{rem:Theorem 3.40-wrong}). Presentations which handle the missing groups are discussed in Example~\ref{Examples3.18(1)}. These are based on \cite[Examples 3.18 (1)]{GKKL1}, which also has some errors (cf. Remark~\ref{rem:p+3-errors}).

  Although the presentations of Example~\ref{Examples3.18(1)} can be used to repair the proof of \cite[Theorem 3.40]{GKKL1}, further work is required to fully justify \cite[Remark 3.37]{GKKL1}. To this end, one could show that a claim analogous to Lemma~\ref{lem:n-cycle-bit-length} holds for the presentations of alternating groups in Example~\ref{Examples3.18(1)}. Our approach is to instead establish, using Lemma~\ref{lem:Mor}, that only finitely many groups were not handled by the proof of \cite[Theorem 3.40]{GKKL1}.
\end{remark}

\subsection{Explicit short presentations of $A_n$ and $S_n$}

In \cite[Section 3.5]{GKKL1} an explicit 3-generator 7-relator presentation of $S_n$ with bit-length $O(\log n)$ is described for $n\geq50$. However, the generators do \emph{not} satisfy the relations.

We now list a corrected version. Writing down such a presentation amounts to making specific choices of the words $\tilde{a},c,d,e,w$ as described in Theorem~\ref{thm:alt-sym-n}, each of bit-length $O(\log n)$. Some of the notation used here conflicts with that of \cite{GKKL1}.

\begin{theorem}
  \label{thm:sym-explicit}
  Let $n$ be an integer such that $14\leq n\leq 20$, $26\leq n\leq 44$ or $n\geq50$. Let $p\equiv11\bmod{12}$ be a prime such that $(n+2)/2\leq p\leq n-3$ (cf. Lemma~\ref{lem:Mor}). Let $s(r-1)\equiv-1\bmod{p}$ where $\F_p^*=\langle r \rangle$. By Theorem~\ref{thm:alt-sym-n}, the symmetric group $S_n$ has a 3-generator 7-relator presentation with bit-length $O(\log n)$. Namely:
  \[ \bar{J}\coloneqq\langle a,g,y \mid a^p = b^{p-1},\ (a^s)^b = a^{s-1},\ (zz^a)^2=1,\ \tilde{a}=(\tilde{a}h)^y,\ c=c^y,\ [d,e^y] = 1,\ y = w \rangle \cong S_n, \]
  for words $b,z,h,\tilde{a},c,d,e,w$ defined below. Let $S_n=\Sym(\{-p+k-1,\ldots,p+2\})$. Define $\bar{\varphi}(a)\coloneqq\varphi(a)$ and $\bar{\varphi}(g)\coloneqq\varphi(g)$, where $\varphi$ is defined in Remark~\ref{rem:p+2-isomorphisms}. Define $\bar{\varphi}(y)\coloneqq\bar{y}$ where $\bar{y}$ is defined in \eqref{eq:y-bar}. Then $\bar{\varphi}$ extends to an isomorphism $\bar{\varphi}\colon\bar{J}\to S_n$. Note that $a$ and $b$ play their roles as described in Theorem~\ref{thm:p+2}, and \emph{not} as in Theorem~\ref{thm:glue}. Let $k\coloneqq 2p+4-n$, so $6\leq k\leq p+1$ and $n\equiv k\bmod{2}$. As in \cite[Section 3.5]{GKKL1}, if $f$ is a word in $\{a,g,y\}$ and $\sigma$ is a permutation in $S_n$, write $f\equiv\sigma$ or $\sigma\equiv f$ to signify that $\bar{\varphi}(f)=\sigma$. Let $\alpha\in\F_p^*$ be such that $\alpha^3=r$. Such an $\alpha$ exists because $(p-1,3)=1$. Use the representatives $\{1,2,\ldots,p\}$ for $\F_p$. Define words as follows.

  \begin{enumerate}[label=(\arabic*)]
  \item \label{sym-n-gens}
    \begin{description}[noitemsep]
    \item An isomorphism $\bar{J}\cong S_n$ is induced by mapping $\{a,g,y\}$ to the following permutations (cf. Remark~\ref{rem:p+2-isomorphisms} and \eqref{eq:y-bar}):
    \item $a\equiv(1,2,\ldots,p)$,
    \item $g\equiv(x\mapsto\alpha x : x\in\F_p^*)(p,p+1,p+2)$,
    \item $y\equiv(-p-1+k,p+2)(-p+k,p+1)\cdots(-1,k+2)(0,k+1)$.
    \end{description}
  \item
    \begin{description}[noitemsep]
    \item (cf. Theorem~\ref{thm:p+2})
    \item $b\coloneqq g^3\equiv(x\mapsto rx : x\in\F_p^*)$,
    \item $z\coloneqq g^{p-1}\equiv(p,p+1,p+2)$,
    \item $h\coloneqq (b^2 z(1)z(r))^{(p+1)/2}\equiv1$.
    \end{description}
  \item
    \begin{description}[noitemsep]
    \item $z(i)\coloneqq z^{a^i}\equiv(i,p+1,p+2)$ for $1\leq i\leq p$ (cf. Lemma~\ref{rem:p+2-cycle-bit-length}),
    \item $d(i,j)\coloneqq z(i)z(j)^{-1}z(i)\equiv(i,j)(p+1,p+2)$ for $1\leq i,j\leq p$, $i\neq j$.
    \item (These remain unchanged when $i,j$ are replaced by equivalent residues mod $p$.)
    \end{description}
  \item
    \begin{description}[noitemsep]
    \item Defining even cycles (cf. Lemma~\ref{rem:p+2-cycle-bit-length}):
    \item $x\coloneqq d(0,1)\equiv(1,p)(p+1,p+2)$,
    \item $c(i,j) \coloneqq
      \begin{cases}
        a^{-j}(ax)^{j-i}a^i & \text{for } 1\leq i \leq j\leq p-1, \\
        z^{(za)^{-2}}c(i,p-2) & \text{for } 1\leq i\leq p-2 \text{ and } j=p, \\
        z^{(za)^{-1}}c(i,p-1) & \text{for } 1\leq i\leq p-1 \text{ and } j=p+1, \\
        zc(i,p) & \text{for } 1\leq i\leq p-2 \text{ and } j=p+2,
      \end{cases}$
    \item $\phantom{c(i,j){}}\equiv (i,i+1,\ldots,j)(p+1,p+2)^{j-i}$ where defined (cf. \eqref{eq:p+2-even-cycles}).
    \end{description}
  \item
    \begin{description}[noitemsep]
    \item Defining a transposition (cf. \eqref{eq:p+2-transposition-explicit}):
    \item $b_2\coloneqq b^{(p-1)/2}\equiv (1,p-1)(2,p-2)\cdots(\frac{p-1}{2},\frac{p+1}{2})$,
    \item $c_\bullet\coloneqq c(1,\frac{p-1}{2})c(\frac{p+1}{2},p-1)^{-1}\equiv(1,2,\ldots,\frac{p-1}{2})(p-1,p-2,\ldots,\frac{p+1}{2})$,
    \item $v\coloneqq (c_\bullet d(1,-1))^{(p-1)/2}\equiv(1,p-1)(2,p-2)\cdots(\frac{p-1}{2},\frac{p+1}{2})(p+1,p+2)$,
    \item $t\coloneqq vb_2\equiv(p+1,p+2)$.
    \end{description}
  \item
    \begin{description}[noitemsep]
    \item Gluing presentations (cf. Theorems~\ref{thm:glue} and \ref{thm:alt-sym-n}):
    \item $\tilde{a} \coloneqq z(3)^{z(2)z(1)} \equiv (1,2,3)$,
    \item $c \coloneqq
      \begin{cases}
        c(2,k)t\equiv (2,\ldots,k) & \text{if $n$ is odd}, \\
        c(1,k)t\equiv (1,\ldots,k) & \text{if $n$ is even},
      \end{cases}$
    \item $d\coloneqq c(5,p+2)\equiv(5,\ldots,p+2)$,
    \item $e\coloneqq
      \begin{cases}
        zat\,c(3,k+1)^{-1}\equiv(1,2,k+1,\ldots,p,p+1,p+2) & \text{if $n$ is odd}, \\
        zat\,c(2,k+1)^{-1}\equiv(1,k+1,\ldots,p,p+1,p+2) & \text{if $n$ is even}.
      \end{cases}$
    \end{description}
  \item
    \begin{description}[noitemsep]
    \item Expressing $w$ as a word in $\{a,g\}\cup \{a,g\}^y$ (cf. Theorem~\ref{thm:alt-sym-n}):
    \item If $k=p+1$, then let $w\coloneqq t^{yz}\equiv(0,p+2)\equiv y$.
    \item If $k=p$, then let $w\coloneqq z^{yz^{-1}}z^{yz}\equiv(-1,p+2)(0,p+1)\equiv y$.
    \item If $k=p-1$, then let $w\coloneqq z^{aya^{-1}z^{-1}}z^{aya^{-1}z}[d(1,p)t]^{ya^{-1}}\equiv(-2,p+2)(-1,p+1)(0,p)\equiv y$.
    \item If $k\leq p-2$, then define
    \item $\tilde{x}\coloneqq[d(1,k+2)d(2,k+1)]^{d(1,k+2)^yd(2,k+1)^y}\equiv(-1,k+2)(0,k+1)$,
    \item $u\coloneqq (za)(za)^y\equiv(1,\ldots,k,k+1,k+2,\ldots,p,p+1,p+2)(1,\ldots,k,0,-1,\ldots,-p-1+k)$,
    \item $w\coloneqq
      \begin{cases}
        (\tilde{x}u^{-2})^{(p-k)/2}\tilde{x}u^{p-k} & \text{if $n$ is odd} \\
        t^{zay(za)^{-1}}(\tilde{x}u^{-2})^{(p-k-1)/2}\tilde{x}u^{p-k-1} & \text{if $n$ is even}
    \end{cases}
    \equiv y
    \quad \text{(cf. \eqref{eq:horner})}$.\hfill $\begin{array}{c} \\ \qedhere \end{array}$
    \end{description}
  \end{enumerate}
\end{theorem}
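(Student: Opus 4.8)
The plan is to treat this theorem as an explicit instance of Theorem~\ref{thm:alt-sym-n}. That result already reduces the construction of a $3$-generator $7$-relator presentation of $S_n$ of bit-length $O(\log n)$ to the choice of words $\tilde a, c, d, e$ in $\{a,g\}$ and a word $w$ in $\{a,g\}\cup\{a,g\}^y$ satisfying the hypotheses of Theorem~\ref{thm:glue} (with $m = p+2$, $k = 2p+4-n$, and $\bar y$ as in \eqref{eq:y-bar}), together with $\hat\varphi(\tilde a) = ((1,2,3),1)$. So I would organise the argument in three steps: first, verify that every word appearing in items (2)--(7) evaluates under $\bar\varphi$ to the permutation asserted there; second, verify that $\tilde a, c, d, e$ and $\bar y$ meet the support and generation hypotheses of Theorem~\ref{thm:glue}; and third, verify that $w \equiv y$, that is, that evaluating $w$ in $\{a,g\}\cup\{a,g\}^y$ with $y\mapsto\bar y$ returns $\bar y$. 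Once these are established, the isomorphism $\bar J\cong S_n$ taking $a,g,y$ to the permutations in item~\ref{sym-n-gens} is exactly the one produced in the proof of Theorem~\ref{thm:alt-sym-n} --- the single relator $\tilde a = (\tilde a h)^y$ replacing the pair $\tilde a = \tilde a^y$, $h = 1$ as justified there, using that $3\nmid|\hat\varphi(h)|$ from Remark~\ref{rem:p+2-isomorphisms} --- and the bit-length is $O(\log n)$ because each of the seven relators is a word of bounded shape in $a,g,y$ whose exponents lie in $\{1,\ldots,p\}$ and are written in binary.

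The first step should be a chain of routine permutation calculations built on Theorem~\ref{thm:p+2} and Remark~\ref{rem:p+2-isomorphisms}, which already give $\bar\varphi(b) = (x\mapsto rx : x\in\F_p^*)$, $\bar\varphi(z) = (p,p+1,p+2)$ and $\bar\varphi(h) = 1$. From these one gets $z(i)\equiv(i,p+1,p+2)$ by conjugating $(p,p+1,p+2)$ by $(1,\ldots,p)^i$, and $d(i,j)\equiv(i,j)(p+1,p+2)$ by a three-factor product; the assertions about $x$, the even cycles $c(i,j)$, and the transposition $t$ are precisely Lemma~\ref{rem:p+2-cycle-bit-length} together with \eqref{eq:p+2-even-cycles} and \eqref{eq:p+2-transposition-explicit}, whose hypotheses hold here because $p\equiv11\bmod{12}$. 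The gluing words $\tilde a = z(3)^{z(2)z(1)}$, $c$, $d = c(5,p+2)$ and $e$ then fall out by multiplying these building blocks, with $n\equiv k\bmod 2$ being what makes the extra $(p+1,p+2)$ factors cancel; and $\hat\varphi(\tilde a) = ((1,2,3),1)$ holds because $\hat\varphi(z)\in S_{p+2}\times 1$.

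For the second step I would read the support conditions directly off \eqref{eq:y-bar} and items (6)--(7): $\bar y$ carries $\{-p+k-1,\ldots,k\}$ onto $\{1,\ldots,p+2\}$ and fixes $\{1,\ldots,k\}$ pointwise, $\bar\varphi(\tilde a) = (1,2,3)$, $\bar\varphi(c)\in\Sym(\{1,\ldots,k\})$, $\bar\varphi(d)\in\Sym(\{5,\ldots,p+2\})\subseteq\Sym(\{4,\ldots,p+2\})$, and $\bar\varphi(e)^{\bar y}\in\Sym(\{-p+k-1,\ldots,3\})$. The generation hypotheses follow from the standard fact that $(1,2,3)$ together with a long cycle on $\{1,\ldots,k\}$ (respectively on $\{-p+k-1,\ldots,3\}$) generates the full symmetric group once the long cycle has the right parity, which is arranged by the factor $t\equiv(p+1,p+2)$ incorporated into $c$ and $e$; for the condition $\langle\bar\varphi(b),\bar\varphi(d)\rangle = \Sym(\{4,\ldots,p+2\})$ one only needs the existence of a word $b$ with $\bar\varphi(b)$ a suitably chosen element of $\Sym(\{4,\ldots,k\})$, which is clear.

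I expect the third step to be the main obstacle, the delicate case being $6\le k\le p-2$. There $w = (\tilde x u^{-2})^{(p-k)/2}\,\tilde x\,u^{p-k}$ when $n$ is odd, and the same word preceded by the factor $t^{zay(za)^{-1}}$ when $n$ is even; by the Horner identity \eqref{eq:horner} with $f = u^2$ the former unfolds to $\tilde x\,\tilde x^{u^2}\,\tilde x^{u^4}\cdots\tilde x^{u^{p-k}}$. The work is to show that conjugating $\tilde x\equiv(-1,k+2)(0,k+1)$ by the successive even powers of $u = (za)(za)^y$ produces exactly the commuting transpositions $(-j,k+1+j)$ that appear in $\bar y$, so that the product of these $(p-k)/2+1$ pairs telescopes to $\bar y$; in the even-$n$ case the leading $4$-cycle of $\bar y$ is instead supplied by the extra conjugate of $t$. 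This is a finite but fiddly orbit computation in which the opposite-order multiplication convention and the parity of $n$ must be tracked with care. The remaining cases $k\in\{p+1,p,p-1\}$ are quick: each of the short words given for $w$ in item (7) is verified directly by computing one or two conjugates of $z$ or $t$.
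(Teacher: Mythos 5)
Your proposal is correct and takes essentially the same approach as the paper: the paper's proof likewise reduces to verifying that the words in items (2)--(7) evaluate under $\bar\varphi$ as asserted and that $\tilde a, c, d, e, \bar y, w$ satisfy the hypotheses of Theorem~\ref{thm:glue}, with the combined relator $\tilde a = (\tilde a h)^y$ justified exactly as in Theorem~\ref{thm:alt-sym-n}. The only small slip is your reference to ``$\bar\varphi(b)$'' when discussing the generation condition $\langle\sigma,\varphi(d)\rangle=\Sym(\{4,\ldots,p+2\})$: the $b$ of Theorem~\ref{thm:glue} is not the $b=g^3$ of Theorem~\ref{thm:p+2} (as the statement itself warns), and the paper sidesteps this by writing $\sigma$ for the required element of $\Sym(\{4,\ldots,k\})$, whose existence is all that is needed.
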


\begin{proof}
  By construction the presentation has bit-length $O(\log n)$. Let $\bar{\varphi}\colon\{a,g,y\}\to S_n=\Sym(\{-p+k-1,\ldots,p+2\})$ be the mapping indicated by \ref{sym-n-gens}. Let $\varphi\colon J\to G$ and $\hat{\varphi}\colon\hat{J}\to\hat{G}$ be the isomorphisms defined in Remark~\ref{rem:p+2-isomorphisms}. The following observations, together with Theorems~\ref{thm:p+2}, \ref{thm:glue} and \ref{thm:alt-sym-n}, prove that $\bar{\varphi}$ extends to an isomorphism $\bar{\varphi}\colon\bar{J}\to S_n$.
  \begin{itemize}
  \item $\bar{\varphi}$ agrees with $\varphi$ on $X\coloneqq\{a,g\}$.
  \item $\bar{\varphi}(y)$ is as specified in \eqref{eq:y-bar}.
  \item The words $b,z,h$, and $r,s\in\F_p^*$, are as specified in Theorem~\ref{thm:p+2}.
  \item As required by Theorem~\ref{thm:glue}:
    \begin{itemize}
  \item $\hat{\varphi}(\tilde{a})=((1,2,3),1)$,
  \item $\varphi(c)\in\Sym(\{1,\ldots,k\})$,
  \item $\varphi(d)\in\Sym(\{4,\ldots,p+2\})$,
  \item $\varphi(e)^{\bar{\varphi}(y)}\in\Sym(\{-p+k-1,\ldots,3\})$,
  \item $\langle \varphi(\tilde{a}), \varphi(c) \rangle=\Sym(\{1,\ldots,k\})$,
  \item $\langle \sigma, \varphi(d) \rangle=\Sym(\{4,\ldots,p+2\})$ for some $\sigma\in\Sym(\{4,\ldots,k\})$,
  \item $\langle \varphi(a), \varphi(e)^{\bar{\varphi}(y)} \rangle=\Sym(\{-p+k-1,\ldots,3\})$,
  \item $w$ is a word in $X\cup X^y$ and $\theta(w)=\bar{\varphi}(y)$, where $\theta\colon F_{X\cup\{y\}}\to S_n$ extends $\bar{\varphi}$.\qedhere
    \end{itemize}
  \end{itemize}
\end{proof}

We also provide an explicit presentation of the alternating groups of sufficiently large degree. Here our task is somewhat simpler (we need not construct a word corresponding to a transposition). However, we must be more careful in our choice of permutation corresponding to $y$, which must be an even permutation (cf. \eqref{eq:y-bar}).

\begin{theorem}
  \label{thm:alt-explicit}
  Let $n$ be an integer such that $15\leq n\leq 20$, $27\leq n\leq 44$, or $n\geq51$. Let $p\equiv11\bmod{12}$ be such that $(n+2)/2\leq p\leq n-4$ (cf. Lemma~\ref{lem:Mor}). Let $s(r-1)\equiv-1\bmod{p}$ where $\F_p^{*2}=\langle r \rangle$. By Theorem~\ref{thm:alt-sym-n}, the alternating group $A_n$ has a 3-generator 7-relator presentation with bit-length $O(\log n)$. Namely:
  \[ \bar{J} \coloneqq \langle a,g,y \mid a^p = b^{(p-1)/2},\ (a^s)^b = a^{s-1},\ (zz^a)^2 = 1,\ \tilde{a} = (\tilde{a}h)^y,\ c = c^y,\ [d,e^y] = 1,\ y = w \rangle \cong A_n, \]
  for words $b,z,h,\tilde{a},c,d,e,w$ defined below. Let $A_n=\Alt(\{-p+k-1,\ldots,p+2\})$. Define $\bar{\varphi}(a)\coloneqq\varphi(a)$ and $\bar{\varphi}(g)=\varphi(g)$, where $\varphi$ is defined in Remark~\ref{rem:p+2-isomorphisms}. Define $\bar{\varphi}(y)\coloneqq\bar{y}$ where $\bar{y}$ is defined in \eqref{eq:y-bar}. Then $\bar{\varphi}$ extends to an isomorphism $\bar{\varphi}\colon\bar{J}\to A_n$. Note that $a$ and $b$ play their roles as described in Theorem~\ref{thm:p+2}, and \emph{not} as in the previous section. Let $k\coloneqq 2p+4-n$, so $6\leq k\leq p$ and $n\equiv k\bmod{2}$. As in \cite[Section 3.5]{GKKL1}, if $f$ is a word in $\{a,g,y\}$ and $\sigma$ is a permutation in $A_n$, write $f\equiv\sigma$ or $\sigma\equiv f$ to signify that $\bar{\varphi}(f)=\sigma$. Let $\alpha\in\F_p^{*2}$ be such that $\alpha^3=r$. Such an $\alpha$ exists because $(p-1,3)=1$. Use the representatives $\{1,2,\ldots,p\}$ for $\F_p$. Define words as follows.

  \begin{enumerate}[label=(\arabic*)]
  \item \label{alt-n-gens}
    \begin{description}[noitemsep]
    \item An isomorphism $\bar{J}\cong A_n$ is induced by mapping $\{a,g,y\}$ to the following permutations (cf. Remark~\ref{rem:p+2-isomorphisms} and \eqref{eq:y-bar}):
    \item $a\equiv(1,2,\ldots,p)$,
    \item $g\equiv(x\mapsto\alpha x : x\in\F_p^*)(p+2,p+1,p)$,
    \item $y\equiv
      \begin{cases}
        (-p-1+k,p+2)(-p+k,p+1)\cdots(-1,k+2)(0,k+1) & \text{if $n$ is odd,} \\
        (-p-1+k,p+2,-p+k,p+1)(-p+1+k,p)\cdots(-1,k+2)(0,k+1) & \text{if $n$ is even.}
      \end{cases}
      $
    \end{description}
  \item
    \begin{description}[noitemsep]
    \item (cf. Theorem~\ref{thm:p+2})
    \item $b\coloneqq g^3\equiv(x\mapsto rx : x\in\F_p^*)$,
    \item $z\coloneqq g^{(p-1)/2}\equiv(p,p+1,p+2)$,
    \item $h\coloneqq (b^2 z(1)z(-1))^{(p+1)/2}\equiv1$.
    \end{description}
  \item
    \begin{description}[noitemsep]
    \item $z(i)\coloneqq z^{a^i}\equiv(i,p+1,p+2)$ for $1\leq i\leq p$ (cf. Lemma~\ref{rem:p+2-cycle-bit-length}),
    \item $d(i,j)\coloneqq z(i)z(j)^{-1}z(i)\equiv(i,j)(p+1,p+2)$ for $1\leq i,j\leq p$, $i\neq j$.
    \item (These remain unchanged when $i,j$ are replaced by equivalent residues mod $p$.)
    \end{description}
  \item
    \begin{description}[noitemsep]
    \item Defining even cycles (cf. Lemma~\ref{rem:p+2-cycle-bit-length}):
    \item $x\coloneqq d(0,1)\equiv(1,p)(p+1,p+2)$,
    \item $c(i,j) \coloneqq
      \begin{cases}
        a^{-j}(ax)^{j-i}a^i & \text{for } 1\leq i \leq j\leq p-1, \\
        z^{(za)^{-2}}c(i,p-2) & \text{for } 1\leq i\leq p-2 \text{ and } j=p, \\
        z^{(za)^{-1}}c(i,p-1) & \text{for } 1\leq i\leq p-1 \text{ and } j=p+1, \\
        zc(i,p) & \text{for } 1\leq i\leq p-2 \text{ and } j=p+2,
      \end{cases}$
    \item $\phantom{c(i,j){}}\equiv (i,i+1,\ldots,j)(p+1,p+2)^{j-i}$ where defined (cf. \eqref{eq:p+2-even-cycles}).
    \end{description}
  \item
    \begin{description}[noitemsep]
    \item Gluing presentations (cf. Theorems~\ref{thm:glue} and \ref{thm:alt-sym-n}):
    \item $\tilde{a} \coloneqq z(3)^{z(2)z(1)} \equiv (1,2,3)$,
    \item $c \coloneqq
      \begin{cases}
        c(1,k)\equiv (1,\ldots,k) & \text{if $n$ is odd}, \\
        c(2,k)\equiv (2,\ldots,k) & \text{if $n$ is even},
      \end{cases}$
    \item $d\coloneqq c(5,p+2)\equiv(5,\ldots,p+2)$,
    \item $e\coloneqq
      \begin{cases}
        za\, c(2,k+1)^{-1}\equiv(1,k+1,\ldots,p,p+1,p+2) & \text{if $n$ is odd}, \\
        za\,c(3,k+1)^{-1}\equiv(1,2,k+1,\ldots,p,p+1,p+2) & \text{if $n$ is even}.
      \end{cases}$
    \end{description}
  \item
    \begin{description}[noitemsep]
    \item Expressing $w$ as a word in $\{a,g\}\cup \{a,g\}^y$ (cf. Theorem~\ref{thm:alt-sym-n}):
    \item If $k=p$, then let $w\coloneqq z^{yz^{-1}}z^{yz}\equiv(-1,p+2)(0,p+1)\equiv y$.
    \item If $k\leq p-1$, then define
    \item $\tilde{x}\coloneqq[d(1,k+2)d(2,k+1)]^{d(1,k+2)^yd(2,k+1)^y}\equiv(-1,k+2)(0,k+1)$,
    \item $u\coloneqq (za)(za)^y\equiv(1,\ldots,k,k+1,k+2,\ldots,p,p+1,p+2)(1,\ldots,k,0,-1,\ldots,-p-1+k)$,
    \item $w\coloneqq
      \begin{cases}
        (\tilde{x}u^{-2})^{(p-k)/2}\tilde{x}u^{p-k} & \text{if $n$ is odd}, \\
        d(1,-1)^{zaya^{-1}z^{-2}a^{-1}}z(1)^{-ya^{-1}z^{-1}}(\tilde{x}u^{-2})^{(p-k-3)/2}\tilde{x}u^{p-k-3} & \text{if $n$ is even},
      \end{cases}$
    \item $\phantom{w{}}\equiv y$ (cf. \eqref{eq:horner}).\qedhere
    \end{description}
  \end{enumerate}
\end{theorem}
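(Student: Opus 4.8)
The plan is to run the same argument that proves Theorem~\ref{thm:sym-explicit}: exhibit the map $\bar\varphi$ on $\{a,g,y\}$, then check that the concrete words in items (2)--(6) satisfy, verbatim, the hypotheses of Theorems~\ref{thm:p+2}, \ref{thm:glue} and \ref{thm:alt-sym-n}, taken with $m\coloneqq p+2$ and $k\coloneqq 2p+4-n$ (so $6\leq k\leq p$ and $n\equiv k\bmod 2$, as needed). The bit-length claim is immediate: each word is built from $a$, $g$ and their $y$-conjugates using a bounded number of multiplications and exponents of size $O(p)$, and by Lemma~\ref{rem:p+2-cycle-bit-length} the recursively defined $c(i,j)$ expands to a word of bit-length $O(\log p)=O(\log n)$.

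First I would pin down the base data. By Remark~\ref{rem:p+2-isomorphisms}, $a\mapsto(1,2,\dots,p)$ and $g\mapsto(x\mapsto\alpha x\colon x\in\F_p^*)(p+2,p+1,p)$ extend to the isomorphism $\varphi\colon J\to A_{p+2}$ of Theorem~\ref{thm:p+2}(i): the $3$-cycle appearing in $\bar\varphi(g)$ is the square of $(p,p+1,p+2)$ because $\kappa=(p-1)/2\equiv 2\bmod 3$ when $p\equiv11\bmod{12}$, and since $\alpha$ has order $\kappa$ in $\F_p^{*2}$ one gets $\bar\varphi(z)=\varphi(g)^\kappa=(p,p+1,p+2)$. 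Under $\varphi$ this yields $b=g^3\equiv(x\mapsto rx)$, $z\equiv(p,p+1,p+2)$, $h\equiv 1$, and $\hat\varphi(h)=(1,\bar b)\in 1\times T$, so $3\nmid|\hat\varphi(h)|$; the first three relators of $\bar J$ are exactly the relators $R'$ of $\hat J$ in Theorem~\ref{thm:p+2}. Also $\hat\varphi(\tilde a)=((1,2,3),1)$ since $\hat\varphi(z)\in A_{p+2}\times 1$, and $\bar\varphi(y)$ is the permutation $\bar y$ of \eqref{eq:y-bar}, which is even owing to the parity bookkeeping ($p-k$ is odd, and when $n$ is even this is precisely what the leading $4$-cycle of $\bar y$ compensates for).

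Next I would translate the remaining conditions of Theorem~\ref{thm:glue} into permutation identities, exactly as in the proof of Theorem~\ref{thm:sym-explicit}. Using $z(i)\equiv(i,p+1,p+2)$ one gets $\tilde a\equiv(1,2,3)$, and $c(i,j)\equiv(i,\dots,j)(p+1,p+2)^{j-i}$ by Lemma~\ref{rem:p+2-cycle-bit-length}; because $k\equiv n\bmod 2$ no transposition correction is needed (unlike the symmetric case), so $c\equiv(1,\dots,k)$ or $(2,\dots,k)$ lies in $\Alt(\{1,\dots,k\})$, $d\equiv(5,\dots,p+2)\in\Alt(\{4,\dots,p+2\})$, and, after checking $za\equiv(1,2,\dots,p+2)$, the word $e$ evaluates to $(1,k+1,\dots,p+2)$ or $(1,2,k+1,\dots,p+2)$, whose conjugate $e^{\bar y}$ is a cycle supported in $\{-p+k-1,\dots,3\}$. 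The generation statements $\langle\varphi(\tilde a),\varphi(c)\rangle=\Alt(\{1,\dots,k\})$ and $\langle\varphi(\tilde a),\varphi(e)^{\bar y}\rangle=\Alt(\{-p+k-1,\dots,3\})$, and the existence of a word playing the role of $b$ with $\langle\cdot,\varphi(d)\rangle=\Alt(\{4,\dots,p+2\})$, all follow from the standard fact that a long cycle together with a $3$-cycle meeting it in one or two points generates the full alternating group (conjugating the $3$-cycle by powers of the long cycle sweeps out adjacent $3$-cycles).

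The main obstacle, as with Theorem~\ref{thm:sym-explicit}, is the last relation $y=w$: proving $\theta(w)=\bar y$ for the word $w$ of item~(6), which must moreover visibly be a word in $X\cup X^y$. For $k=p$ it is a one-line check, $z^{yz^{-1}}z^{yz}\equiv(-1,p+2)(0,p+1)=\bar y$. For $k\leq p-1$ and $n$ odd, the element $u=(za)(za)^y$ is a product of two long cycles whose conjugation shifts the double transposition $\tilde x\equiv(-1,k+2)(0,k+1)$ by one step, so Horner's rule \eqref{eq:horner} assembles $(\tilde x u^{-2})^{(p-k)/2}\tilde x u^{p-k}$ into the product of transpositions defining $\bar y$. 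For $n$ even, $\bar y$ carries the extra leading $4$-cycle, so $w$ first applies a correcting prefix built from $d(1,-1)$ and $z(1)$ suitably conjugated, followed by a shorter telescoping product with $p-k-3$ in place of $p-k$; checking that this prefix accounts for the $4$-cycle and the two transpositions it absorbs is the most delicate step, and I would do it by tracking the images of the finitely many moved points $\{-p+k-1,-p+k,-p+k+1,p,p+1,p+2\}$ together with the generic interior points. Throughout, the hypotheses $p\equiv11\bmod{12}$ (so $(p-1)/2$ is odd and $\equiv 2\bmod 3$), $(3,p-1)=1$ (so $\alpha$ exists and the relations $z^3=1$, $[z,b]=1$ may be dropped, as in Theorem~\ref{thm:alt-p+2}), and $n\equiv k\bmod 2$ are precisely what make each exponent and cycle length come out right; a \textsc{Magma} coset enumeration for small $n$ provides an independent check.
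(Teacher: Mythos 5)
Your proposal follows the same route as the paper: the paper's proof of Theorem~\ref{thm:alt-explicit} is a one-line remark that it follows from Theorems~\ref{thm:p+2}, \ref{thm:glue} and \ref{thm:alt-sym-n}, with the proof ``almost identical to that of Theorem~\ref{thm:sym-explicit}'', and your plan expands that checklist (identify $\bar\varphi$ with the maps of Remark~\ref{rem:p+2-isomorphisms} and \eqref{eq:y-bar}, verify each word meets the hypotheses of Theorem~\ref{thm:glue}, and use Horner's rule \eqref{eq:horner} for $y=w$) in exactly the same spirit. Your supporting observations — $\kappa\equiv 2\bmod 3$ and $\kappa$ odd for $p\equiv 11\bmod{12}$, $\alpha$ of order $\kappa$, $\hat\varphi(h)\in 1\times T$ with $3\nmid|\hat\varphi(h)|$, and the parity of $\bar y$ — are all consistent with the paper's setup, so the argument is sound and essentially identical in approach.
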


\begin{proof}
  This follows from Theorems~\ref{thm:p+2}, \ref{thm:glue} and \ref{thm:alt-sym-n}. The proof is almost identical to that of Theorem~\ref{thm:sym-explicit}.
\end{proof}

\begin{remark}
  The presentations of $A_n$ and $S_n$ discussed in this section have bit-length $O(\log n)$. A stronger statement also holds. Each such presentation uses a bounded number of exponents in the relations, each of which are at most $n$.
\end{remark}

\begin{remark}
  \label{rem:freedom}
  There is some freedom in constructing the presentations of $A_n$ and $S_n$ described in this section. Let $\theta\colon F_{\{a,g,y\}}\to S_n$ map $\{a,g,y\}$ to the generators of $A_n$ or $S_n$ indicated in Theorem~\ref{thm:alt-explicit} \ref{alt-n-gens} or Theorem~\ref{thm:sym-explicit} \ref{sym-n-gens} respectively. The words $\tilde{a},c,d,e$ in $\{a,g\}$ and $w$ in $\{a,g\}\cup\{a,g\}^y$ can be replaced by words that have the same image under $\theta$. For example, $\theta(a)$ has order $p$, so if $i\equiv j\bmod{p}$, then instances of $a^i$ may be replaced by $a^j$ in the construction of these words. Such observations can be used to mildly reduce the word-length of the presentation.
\end{remark}

\begin{example}
  We construct 3-generator 7-relator presentations of $A_{17}$ and $S_{17}$ as described in Theorem~\ref{thm:alt-sym-n}. These slightly differ from the explicit presentations given in Theorems~\ref{thm:sym-explicit} and \ref{thm:alt-explicit}; they have been modified according to Remark~\ref{rem:freedom}.

  Let $n\coloneqq17$, and $p\coloneqq11$. Note that $p\equiv11\bmod{12}$ and $(n+2)/2\leq p\leq n-4$. Define $k\coloneqq 2p+4-n=9$. Let $A_{17}$ and $S_{17}$ act on $\{-3,-2,-1,0,1,\ldots,13\}$. We will use the 2-generator 4-relator presentations of $A_{11}$ and $S_{11}$ given in Theorem~\ref{thm:p+2}.

  Note that $\F_p^{*2}=\langle 5 \rangle$, and $(-3)(5-1)\equiv-1\bmod{11}$. Also $3^3\equiv5\bmod{11}$. Use $r=5$, $s=-3$ and $\alpha=3$ in Theorem~\ref{thm:p+2} to obtain
  \[ \langle a,g \mid b\coloneqq g^3,\ z\coloneqq g^5,\ h\coloneqq (b^2z^az^{a^{-1}})^6,\ a^{11}=b^5,\ (a^{-3})^b = a^{-4},\ (zz^a)^2 = 1,\ h = 1 \rangle \cong A_{11}. \]
  An isomorphism is defined by $a\mapsto(1,2,\ldots,11)$, and $g\mapsto(1,3,9,5,4)(2,6,7,10,8)(13,12,11)$ (cf. Remark~\ref{rem:p+2-isomorphisms}). Apply Theorem~\ref{thm:alt-explicit} and Remark~\ref{rem:freedom} to obtain
  \begin{align*}
    \langle a,g,y \mid b&\coloneqq g^3,\ z\coloneqq g^5,\ h\coloneqq (b^2z^az^{a^{-1}})^6,\ x\coloneqq z(z^a)^{-1}z,\ \tilde{a}\coloneqq (z^{a^3})^{z^{a^2}z^a},\ c\coloneqq a^2(ax)^{-2}a, \\
                                     & d\coloneqq zz^{(za)^{-2}}a^2(ax)^4a^5,\ e\coloneqq za^{-1}(ax)^2a^{-1},\ \tilde{y}\coloneqq z^az^{-1}z^az^{a^2}(z^{-1})^{a^{-1}}z^{a^2}, \\
                                     & \tilde{z}\coloneqq\tilde{y}^y,\ \tilde{x}\coloneqq \tilde{y}^{\tilde{z}},\ u \coloneqq (za)(za)^y,\ w\coloneqq\tilde{x}u^{-2}\tilde{x}u^2, \\
    &a^{11} = b^5,\ (a^{-3})^b = a^{-4},\ (zz^a)^2=1,\ \tilde{a}=(\tilde{a}h)^y,\ c=c^y,\ [d,e^y]=1,\ y=w \rangle \cong A_{17}.
  \end{align*}
  An isomorphism is defined by $a\mapsto(1,2,\ldots,11)$, $g\mapsto(1,3,9,5,4)(2,6,7,10,8)(13,12,11)$, and $y\mapsto(-3,13)(-2,12)(-1,11)(0,10)$.

  Note that $\F_p^*=\langle 2 \rangle$, and $(-1)(2-1)\equiv-1\bmod{11}$. Also $7^3\equiv2\bmod{11}$. Use $r=2$, $s=-1$ and $\alpha=7$ in Theorem~\ref{thm:p+2} to obtain
  \[ \langle a,g,y \mid b\coloneqq g^3,\ z\coloneqq g^{10},\ h\coloneqq (b^2z^az^{a^2})^6,\ a^{11} = b^{10},\ (a^{-1})^b = a^{-2},\ (zz^a)^2=1,\ h=1\rangle \cong S_{11}. \]
  An isomorphism is defined by $a\mapsto(1,2,\ldots,11)$, and $g\mapsto(1,7,5,2,3,10,4,6,9,8)(11,12,13)$ (cf. Remark~\ref{rem:p+2-isomorphisms}). Apply Theorem~\ref{thm:sym-explicit} and Remark~\ref{rem:freedom} to obtain
  \begin{align*}
    \langle a,g,y \mid b&\coloneqq g^3,\ z\coloneqq g^{10},\ h\coloneqq (b^2z^az^{a^2})^6,\ x\coloneqq z(z^a)^{-1}z,\ \tilde{a}\coloneqq (z^{a^3})^{z^{a^2}z^a}, \\
    & c_\bullet\coloneqq a^{-5}(ax)^4a^{-5}(ax)^{-4}a^{-1},\ v \coloneqq (c_\bullet z^a(z^{-1})^{a^{-1}}z^a)^5,\ t\coloneqq vb^5,\ c\coloneqq a^2(ax)^{-3}a^2t, \\
                                     & d\coloneqq zz^{(za)^{-2}}a^2(ax)^4a^5,\ e\coloneqq  zata^{-3}(ax)^3a^{-1},\ \tilde{y}\coloneqq z^az^{-1}z^az^{a^2}(z^{-1})^{a^{-1}}z^{a^2}, \\
                                     & \tilde{z}\coloneqq\tilde{y}^y,\ \tilde{x}\coloneqq \tilde{y}^{\tilde{z}},\ u \coloneqq (za)(za)^y,\ w\coloneqq\tilde{x}u^{-2}\tilde{x}u^2, \\
    &a^{11} = b^{10},\ (a^{-1})^b = a^{-2},\ (zz^a)^2=1,\ \tilde{a}=(\tilde{a}h)^y,\ c=c^y,\ [d,e^y]=1,\ y=w \rangle \cong S_{17}.
  \end{align*}
  An isomorphism is defined by $a\mapsto(1,2,\ldots,11)$, $g\mapsto(1,7,5,2,3,10,4,6,9,8)(11,12,13)$, and $y\mapsto(-3,13)(-2,12)(-1,11)(0,10)$.
\end{example}

\subsection{Providing access to the presentations}

The presentations and generators of alternating and symmetric groups in Example~\ref{Examples3.18(1)} (cf. \eqref{eq:alt-p+3}), Theorem~\ref{thm:p+2} (cf. \eqref{eq:p+2}), and Theorems~\ref{thm:sym-explicit} and \ref{thm:alt-explicit} are publicly available \cite{github} in \textsc{Magma}. Relators are stored as straight-line programs \cite{BS}.

Evaluating the relations readily demonstrates that, for any input degree supplied by the user, the alternating or symmetric group is a quotient of the corresponding finitely presented group.

Coset enumeration does not work well for the presentations in Theorems~\ref{thm:sym-explicit} and \ref{thm:alt-explicit}. For instance, in minimum degree examples, no coset enumeration over a subgroup corresponding to a point stabilizer completed.

\printbibliography

\end{document}